\newtheorem{theorem}{Theorem}[section]
\newtheorem{corollary}[theorem]{Corollary}
\newtheorem{definition}[theorem]{Definition}
\newtheorem{lemma}[theorem]{Lemma}
\newtheorem{remark}[theorem]{Remark}
\newtheorem{assum}[theorem]{Assumption}
\newtheorem{condition}[theorem]{Condition}
\newtheorem{notation}[theorem]{Notation}
\numberwithin{equation}{section}
\DeclareMathOperator{\trace}{tr}
\newcommand{\real}{\mathbb{R}}
\newcommand{\comp}{\mathbb{C}}
\newcommand{\inte}{\mathbb{Z}}
\newcommand{\pdb}{\bar{\partial}}
\newcommand{\half}{\frac{1}{2}}
\newcommand{\reallywidehat}[1]{%
	\savestack{\tmpbox}{\stretchto{%
			\scaleto{%
				\scalerel*[\widthof{\ensuremath{#1}}]{\kern-.6pt\bigwedge\kern-.6pt}%
				{\rule[-\textheight/2]{1ex}{\textheight}}
			}{\textheight}%
		}{0.5ex}}%
	\stackon[1pt]{#1}{\tmpbox}%
}
\newcommand{\reallywidetilde}[1]{%
	\savestack{\tmpbox}{\stretchto{%
			\scaleto{%
				\scalerel*[\widthof{\ensuremath{#1}}]{\kern-.6pt\sim\kern-.6pt}%
				{\rule[-\textheight/2]{1ex}{\textheight}}
			}{\textheight}%
		}{0.5ex}}%
	\stackon[1pt]{#1}{\tmpbox}%
}
\newcommand{\der}[3]{\text{Der}^{#3}_{#1}(#2,#2)}
\newcommand{\anchor}[1]{\prescript{#1}{}{\alpha}}
\newcommand{\morph}[1]{\prescript{#1}{}{f}}
\newcommand{\morphh}[1]{\prescript{#1}{}{\hat{f}}}
\newcommand{\gla}[1]{\prescript{#1}{}{\widetilde{\mathcal{G}}}}
\newcommand{\gluesset}{\mathfrak{G}}
\newcommand{\gluessetp}{\widetilde{\mathfrak{G}}}
\newcommand{\gluediff}{\mathbf{G}}
\newcommand{\gluediffp}{\tilde{\mathbf{G}}}
\newcommand{\mcsimplicial}[1]{\prescript{#1}{}{\mathfrak{M}\mathfrak{C}}}
\newcommand{\mcsimplicialp}[1]{\prescript{#1}{}{\widetilde{\mathfrak{M}\mathfrak{C}}}}
\newcommand{\mcsimplicialh}[1]{\prescript{#1}{}{\widehat{\mathfrak{M}\mathfrak{C}}}}
\newcommand{\twcp}[1]{\prescript{#1}{}{\widetilde{TW}}}
\newcommand{\tf}[1]{\prescript{#1}{}{TW\mathfrak{F}}}
\newcommand{\two}[1]{\prescript{#1}{}{TW\mathcal{O}}}
\newcommand{\restp}[1]{\prescript{#1}{}{\tilde{\flat}}}
\newcommand{\patchp}[1]{\prescript{#1}{}{\tilde{\psi}}}
\newcommand{\patchh}[1]{\prescript{#1}{}{\hat{\psi}}}
\newcommand{\restap}[1]{\prescript{#1}{}{\tilde{\mathfrak{b}}}}
\newcommand{\patchijp}[1]{\prescript{#1}{}{\tilde{\mathfrak{p}}}}
\newcommand{\cocyobsp}[1]{\prescript{#1}{}{\tilde{\mathfrak{o}}}}
\newcommand{\gluep}[1]{\prescript{#1}{}{\tilde{g}}}
\newcommand{\iautop}[1]{\prescript{#1}{}{\tilde{\mathfrak{a}}}}
\newcommand{\sautop}[1]{\prescript{#1}{}{\tilde{\vartheta}}}
\newcommand{\autoijp}[1]{\prescript{#1}{}{\tilde{\phi}}}
\newcommand{\dbtwistp}[1]{\prescript{#1}{}{\tilde{\mathfrak{d}}}}
\newcommand{\kla}{\prescript{0}{}{\mathcal{R}}}
\newcommand{\shom}{\mathscr{H}om}
\newcommand{\sla}[1]{\prescript{#1}{}{\widehat{\mathcal{G}}}}
\newcommand{\restah}[1]{\prescript{#1}{}{\hat{\mathfrak{b}}}}
\newcommand{\patchijh}[1]{\prescript{#1}{}{\hat{\mathfrak{p}}}}
\newcommand{\cocyobsh}[1]{\prescript{#1}{}{\hat{\mathfrak{o}}}}
\newcommand{\bchprod}{\odot}
\newcommand{\blat}{\mathbf{K}}
\newcommand{\cfr}{R}
\newcommand{\cfrk}[1]{\prescript{#1}{}{\cfr}}
\newcommand{\logs}{S^{\dagger}}
\newcommand{\logsk}[1]{\prescript{#1}{}{S}^{\dagger}}
\newcommand{\logsf}{\hat{S}^{\dagger}}
\newcommand{\bva}[1]{\prescript{#1}{}{\mathcal{G}}}
\newcommand{\rest}[1]{\prescript{#1}{}{\flat}}
\newcommand{\patch}[1]{\prescript{#1}{}{\psi}}
\newcommand{\resta}[1]{\prescript{#1}{}{\mathfrak{b}}}
\newcommand{\patchij}[1]{\prescript{#1}{}{\mathfrak{p}}}
\newcommand{\cocyobs}[1]{\prescript{#1}{}{\mathfrak{o}}}
\newcommand{\simplex}{\blacktriangle}
\newcommand{\simplexbdy}{\vartriangle}
\newcommand{\hsimplex}{\blacksquare}
\newcommand{\hsimplexbdy}{\square}
\newcommand{\twc}[1]{\prescript{#1}{}{TW}}
\newcommand{\restmap}{\mathfrak{r}}
\newcommand{\glue}[1]{\prescript{#1}{}{g}}
\newcommand{\iauto}[1]{\prescript{#1}{}{\mathfrak{a}}}
\newcommand{\sauto}[1]{\prescript{#1}{}{\vartheta}}
\newcommand{\autoij}[1]{\prescript{#1}{}{\phi}}
\newcommand{\cech}[1]{\prescript{#1}{}{\check{\mathcal{C}}}}
\newcommand{\cechd}[1]{\prescript{#1}{}{\delta}}
\newcommand{\dbtwist}[1]{\prescript{#1}{}{\mathfrak{d}}}
\newcommand{\polyv}[1]{\prescript{#1}{}{\mathscr{L}}}
\newcommand{\glueh}[1]{\prescript{#1}{}{\hat{g}}}
\newcommand{\iautoh}[1]{\prescript{#1}{}{\hat{\mathfrak{a}}}}
\newcommand{\sautoh}[1]{\prescript{#1}{}{\hat{\vartheta}}}
\newcommand{\morphc}[1]{\prescript{#1}{}{h}}
\newcommand{\dbtwisth}[1]{\prescript{#1}{}{\hat{\mathfrak{d}}}}
\begin{document}
\title[Smoothing pairs over degenerate CY]{Smoothing pairs over degenerate Calabi-Yau varieties}	
	
\author[Chan]{Kwokwai Chan}
\address{Department of Mathematics\\ The Chinese University of Hong Kong\\ Shatin\\ Hong Kong}
\email{kwchan@math.cuhk.edu.hk}	
	
\author[Ma]{Ziming Nikolas Ma}
\address{The Institute of Mathematical Sciences and Department of Mathematics\\ The Chinese University of Hong Kong\\ Shatin \\ Hong Kong}
\email{zmma@math.cuhk.edu.hk}

\begin{abstract}
	We apply the techniques developed in \cite{chan2019geometry} to study smoothings of a pair $(X,\mathfrak{C}^*)$, where $\mathfrak{C}^*$ is a bounded perfect complex of locally free sheaves over a degenerate Calabi-Yau variety $X$.
	In particular, if $X$ is a projective Calabi-Yau variety admitting the structure of a toroidal crossing space and with the higher tangent sheaf $\mathcal{T}^1_X$ globally generated, and $\mathfrak{F}$ is a locally free sheaf over $X$, then we prove, using the results in \cite{Felten-Filip-Ruddat}, that the pair $(X,\mathfrak{F})$ is formally smoothable when $\text{Ext}^2(\mathfrak{F},\mathfrak{F})_0 = 0$ and $H^2(X,\mathcal{O}_X) = 0$. 
\end{abstract}

\maketitle

\section{Introduction}\label{sec:introduction}

\subsection{Background}\label{sec:intro_background}
After pioneering works of Quillen, Deligne and Drinfeld, it is now a universally accepted philosophy that any deformation problem over a field of characteristic zero should be governed by the Maurer-Cartan equation of a {\em differential graded Lie algebra} (abbreviated as {\em dgLa}) or {\em $L_{\infty}$-algebra}. Lots of works have been done in this direction; see e.g. \cite{fiorenza2012differential,  fiorenza2007structures, fiorenza2012formality, fiorenza2012cosimplicial,  iacono2007differential, iacono2014deformations, KKP08, kontsevichgeneralized, Lurie, manetti2007lie, manetti2005differential, manetti2015some, pridham2010unifying}. In many cases, the existence of an underlying dgLa facilitates the use of algebraic techniques in solving the relevant geometric deformation problem.

An important problem of such kind is the deformation theory of a pair $(X,\mathfrak{F})$, where $X$ is an algebraic variety and $\mathfrak{F}$ is a coherent sheaf over $X$.
When $X$ is smooth, this problem has been studied in detail in \cite{huang1995joint, huybrechts2010deformation, li2008deformation, martinengohigher, sernesi2007deformations}, and the approach using dgLa was also carried out in \cite{chan2016differential} when $\mathfrak{F}$ is locally free and in \cite{iacono2018deformations} when $\mathfrak{F}$ is a general coherent sheaf. 
For singular $X$, the only known case seems to be when $X$ is a reduced local complete intersection and $\mathfrak{F}$ is a line bundle as studied by Wang \cite{wang2012deformations}.
On the other hand, when $X$ is a maximally degenerate Calabi-Yau variety and $\mathfrak{L}$ is an ample line bundle over $X$, deformations of the pair $(X, \mathfrak{L})$ are closely related to the study of theta functions on smoothings of $X$ in \cite{GHKS2016theta, gross2016theta}.
Very little is known when $X$ is singular and $\mathfrak{F}$ is of higher rank. From the perspective of mirror symmetry, a thorough understanding of the deformation theory of $(X,\mathfrak{F})$, where $X$ is at a large complex structure limit and $\mathfrak{F}$ is a coherent sheaf over $X$, is desirable for investigating the correspondence between B-branes on $X$ and A-branes on the mirror.

A major difficulty in obtaining smoothings of a singular variety is that there is always nontrivial topology change in a degeneration while classical deformation theories can only produce equisingular deformations.  
In our recent joint work \cite{chan2019geometry} with Leung, we discovered that this difficulty could be overcome by using Thom-Whitney simplicial constructions to build an {\em almost dgLa}
$\polyv{}^{*}(X)$ over $\comp[[q]]$, instead of a genuine dgLa, from a degenerate Calabi-Yau variety $X$ equipped with local thickening data. The {\em almost} condition here means that the differential of $\polyv{}^{*}(X)$ squares to $0$ only modulo $q$ -- this property reflects precisely the fact that locally trivial deformations are not allowed in a smoothing.\footnote{What we call an almost dgLa here and in \cite{chan2019geometry} is called a {\em pre-dgLa} in Felten's very recent paper \cite{Felten}, where he also explained why ordinary dgLa's do not suffice for the purpose of smoothing singular (even log smooth) varieties.} This can be regarded as providing a singular version of the extended Kodaira-Spencer dgLa $\Omega^{0,*}(X, \wedge^*T_X)[[q]]$ for smooth $X$.

We prove in \cite{chan2019geometry} that smoothings of $X$ are indeed governed by the Maurer-Cartan equation of $\polyv{}^{*}(X)$, which, under certain assumptions, can be solved by general algebraic techniques \cite{KKP08, kontsevichgeneralized, terilla2008smoothness}. This yields Bogomolov-Tian-Todorov--type unobstructedness theorems \cite{bogomolov1978hamiltonian, tian1987smoothness, todorov1989weil} and hence smoothness of the extended moduli space.
Our abstract algebraic framework was applied very recently by Felten-Filip-Ruddat in \cite{Felten-Filip-Ruddat} to obtain smoothings of a very general class of varieties which includes the log smooth Calabi-Yau varieties studied by Friedman \cite{Friedman83} and Kawamata-Namikawa \cite{kawamata1994logarithmic} as well as the maximally degenerate Calabi-Yau varieties studied by Kontsevich-Soibelman \cite{kontsevich-soibelman04} and Gross-Siebert \cite{Gross-Siebert-logI, Gross-Siebert-logII, gross2011real}. 

The goal of this paper is to extend the techniques in \cite{chan2019geometry} to study smoothings of a pair $(X,\mathfrak{F})$, where $\mathfrak{F}$ is a locally free sheaf on a degenerate Calabi-Yau variety $X$. 
Our main result is that, given local thickenings of $\mathfrak{F}$ along with the local thickenings of $X$, there exists an almost dgLa $\polyv{}^*(X,\mathfrak{F})$ which governs the smoothing of $(X,\mathfrak{F})$.
We then apply this to the class of smoothable degenerate Calabi-Yau varieties $X$ obtained in \cite{Felten-Filip-Ruddat}.
Under the further assumptions that
$\text{Ext}^2(\mathfrak{F},\mathfrak{F})_0 = 0$ and that the pair $(X, \det \mathfrak{F})$ (where $\det \mathfrak{F}$ denotes the determinant line bundle of $\mathfrak{F}$) is formally smoothable, our results show that the pair $(X,\mathfrak{F})$ is also formally smoothable.

In the subsequent joint work \cite{chan2020tropical} with Suen, this smoothability result will be applied to a pair $(X, \mathfrak{F})$, where $X$ is a maximally degenerate K3 surface and $\mathfrak{F}$ is a locally free sheaf of arbitrary rank over $X$ associated to a so-called {\em tropical Lagrangian multi-section}, which should arise as a tropical limit of Lagrangian multi-sections in an SYZ fibration of the mirror (cf. \cite{fukaya05, LYZ}). We devise a combinatorial criterion for checking the condition that $\text{Ext}^2(\mathfrak{F},\mathfrak{F})_0 = 0$, thus producing new explicit examples of smoothable pairs. 

\subsection{Main results}\label{sec:intro_main_result}

Before explaining the main result of this paper, let us first state our major geometric application.
\begin{theorem}\label{thm:intro_main_application}
	Let $X$ be a projective toroidal crossing space which is Calabi-Yau (in the sense that $\omega_X \cong \mathcal{O}_X$) and satisfies the assumption that the higher tangent sheaf $\mathcal{T}^1_X := \mathcal{E}\text{xt}^1(\Omega_X,\mathcal{O}_X)$ is globally generated, and $\mathfrak{F}$ be a locally free sheaf over $X$.
	Then the pair $(X, \mathfrak{F})$ is formally smoothable when $\text{Ext}^2(\mathfrak{F},\mathfrak{F})_0  = 0$ and the pair $(X, \det \mathfrak{F})$ is formally smoothable. In particular, $(X, \mathfrak{F})$ is always formally smoothable if $H^2(X,\mathcal{O}_X) = 0$ and $\text{Ext}^2(\mathfrak{F},\mathfrak{F})_0  = 0$.
\end{theorem}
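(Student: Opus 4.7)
My plan is to combine three ingredients: (i) the Felten-Filip-Ruddat construction \cite{Felten-Filip-Ruddat} supplying the system of compatible local thickenings of $X$, (ii) the main construction of the present paper producing an almost dgLa $\polyv{}^{*}(X,\mathfrak{F})$ over $\comp[[q]]$ whose Maurer-Cartan elements are formal smoothings of the pair, and (iii) the classical trace splitting of $\text{Ext}^{2}(\mathfrak{F},\mathfrak{F})$ into its traceless part and $H^{2}(X,\mathcal{O}_X)$.

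First I would apply \cite{Felten-Filip-Ruddat}: its hypotheses are exactly those of the theorem (projective toroidal crossing Calabi-Yau with $\mathcal{T}^{1}_X$ globally generated), and it yields a compatible system of local thickenings of $X$ feeding into the abstract framework of \cite{chan2019geometry}. Since $\mathfrak{F}$ is locally free, its restriction to each affine chart of such a thickening lifts to a free module of the same rank (the obstructions to local lifting live in higher $\mathcal{E}\text{xt}$ groups that vanish on affines), and these local lifts together with arbitrary transition isomorphisms supply the local thickening data for $\mathfrak{F}$ required by the main construction of this paper. That construction then produces the almost dgLa $\polyv{}^{*}(X,\mathfrak{F})$.

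Next I would solve the Maurer-Cartan equation of $\polyv{}^{*}(X,\mathfrak{F})$ order by order in $q$, using the trace to split the obstruction. Fiberwise trace $\mathcal{E}\text{nd}(\mathfrak{F}) \to \mathcal{O}_X$ should induce a morphism of almost dgLas
\[
\text{tr}:\polyv{}^{*}(X,\mathfrak{F}) \longrightarrow \polyv{}^{*}(X,\det\mathfrak{F}),
\]
which on cohomology realizes the classical splitting $\text{Ext}^{2}(\mathfrak{F},\mathfrak{F}) = \text{Ext}^{2}(\mathfrak{F},\mathfrak{F})_{0} \oplus H^{2}(X,\mathcal{O}_X)$. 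Inductively, assume that an order-$k$ Maurer-Cartan element has been constructed whose image under $\text{tr}$ agrees with the order-$k$ truncation of a fixed Maurer-Cartan solution for $(X,\det\mathfrak{F})$ (which exists by the smoothability hypothesis). Then the $\text{tr}$-image of the order-$(k+1)$ obstruction equals the corresponding obstruction for $(X,\det\mathfrak{F})$, which vanishes; hence the obstruction is traceless, and therefore zero by the hypothesis $\text{Ext}^{2}(\mathfrak{F},\mathfrak{F})_{0} = 0$. This closes the induction and yields a formal smoothing of $(X,\mathfrak{F})$.

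For the ``in particular'' part, when $H^{2}(X,\mathcal{O}_X) = 0$ the obstructions to formally smoothing any line bundle over a smoothing of $X$ vanish; since $X$ is itself smoothable by \cite{Felten-Filip-Ruddat}, the pair $(X,\det\mathfrak{F})$ is automatically formally smoothable, reducing this case to the first part. The step I expect to be hardest is the chain-level verification that fiberwise trace really is a morphism of almost dgLas --- namely, that it intertwines the almost-differentials, $L_\infty$-brackets, and the Thom-Whitney patching data used to build $\polyv{}^{*}$ --- because the inductive cancellation of trace obstructions against the given smoothing of $(X,\det\mathfrak{F})$ relies essentially on this compatibility and on a judicious choice of lifts at each order so that the induction hypothesis on $\text{tr}$-images is preserved.
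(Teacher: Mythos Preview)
Your proposal is correct and follows essentially the same approach as the paper: local thickenings from \cite{Felten-Filip-Ruddat}, local trivialization and lifting of $\mathfrak{F}$, construction of the almost dgLa for the pair, and the trace map to $(X,\det\mathfrak{F})$ to split off the obstruction. The only difference is packaging: the paper abstracts your inductive obstruction-cancellation step into a general statement that a surjective morphism of abstract deformation data with vanishing $\mathbb{H}^2$ of the kernel induces a \emph{smooth} map of simplicial Maurer-Cartan functors (Theorem~\ref{thm:main_theorem}), and then applies this to the trace morphism (kernel $=\shom(\mathfrak{F},\mathfrak{F})_0$) and, for the ``in particular'' clause, to the anchor morphism from $(X,\det\mathfrak{F})$ to $X$ (kernel $=\mathcal{O}_X$); your direct order-by-order argument is precisely the content of that theorem's proof.
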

Some explanations of this statement are in order:
\begin{itemize}
	\item 
	A {\em toroidal crossing space} is defined in \cite[Definition 1.5]{Felten-Filip-Ruddat} (see also the 2nd paragraph of \S \ref{sec:deformation_of_pair_on_log}). A typical example is given by a normal crossing space, i.e., a connected variety locally of the form $z_0\cdots z_k = 0$ for $(z_0, \dots, z_n)\in \comp^{n+1}$ where $n = \dim X$. The boundary divisor in a Gorenstein toric variety is also naturally a toroidal crossing space. 
	\item We say a space or a pair is {\em formally smoothable} if there exists a compatible system of thickenings over $\comp[q]/(q^{k+1})$ for $k \in \mathbb{N}$; see Definition \ref{def:formally_smoothable} for the precise definition.
	\item
	(Formal) smoothability of a projective toroidal crossing space which is Calabi-Yau and such that $\mathcal{T}^1_X$ is globally generated follows from the main results in \cite{Felten-Filip-Ruddat}. Note that the condition here is weaker than Friedman's famous notion of $d$-semistability \cite{Friedman83} which requires that $\mathcal{T}^1_X$ is trivial. Theorem \ref{thm:intro_main_application} can be viewed as giving a sufficient condition for formal smoothability of the pair $(X, \mathfrak{F})$ under the assumption that $X$ is formally smoothable.
	\item
	More generally, we may let $X$ be a projective toroidal crossing space which is Calabi-Yau and satisfies the assumptions in \cite[Theorem 1.7]{Felten-Filip-Ruddat}. Felten-Filip-Ruddat \cite{Felten-Filip-Ruddat} equipped such $X$ with a log structure locally modeled by the same types of potentially singular log schemes that appeared in the Gross-Siebert program \cite{Gross-Siebert-logI, Gross-Siebert-logII}, and proved the degeneracy of the Hodge-to-de Rham spectral sequence for the log de Rham complex at the $E_1$ page. This enables them to apply the dgBV framework and main results in \cite{chan2019geometry} to prove that $X$ is smoothable to an orbifold with terminal singularities. Theorem \ref{thm:intro_main_application} applies to such varieties as well.
\end{itemize}

Theorem \ref{thm:intro_main_application} follows from a more general result concerning the formal smoothability of a pair $(X, \mathfrak{C}^*)$, where $X$ is again a projective toroidal crossing space which is Calabi-Yau and $\mathfrak{C}^*$ is a bounded complex of locally free sheaves (of finite rank) over $X$. We will extend and apply the abstract algebraic framework in \cite{chan2019geometry} to study this smoothability problem. 
Throughout this paper, we work over $\comp$ and with the polynomial rings $\cfr := \comp[q]$ and $\cfrk{k} := \comp[q]/(q^{k+1})$.

Before describing our main result, let us recall the main ideas behind the construction in \cite{chan2019geometry}. Our starting point was a degenerate Calabi-Yau variety $X$, equipped with a covering $\mathcal{V} = \{V_\alpha\}_{\alpha}$ by Stein open subsets together with a local $k$-th order thickening $\prescript{k}{}{\mathbf{V}}_{\alpha}$ over $\cfrk{k}$ for each $\alpha$ and $k \in \mathbb{N}$. From the local model $\prescript{k}{}{\mathbf{V}}_{\alpha}$, we obtain a coherent sheaf $\bva{k}_{\alpha}^*$ of Batalin-Vilkovisky (BV) algebras\footnote{Or Gerstenhaber algebras, as advocated in \cite{Felten-Filip-Ruddat}.} over $\cfrk{k}$ on $V_\alpha$. 
We further assume that there is another Stein open covering $\mathcal{U} = \{U_i\}_{i \in \mathbb{N}}$ forming a basis of the topology and a biholomorphism $\prescript{k}{}{\Psi}_{\alpha\beta,i} : \prescript{k}{}{\mathbf{V}}_{\alpha}|_{U_i} \rightarrow \prescript{k}{}{\mathbf{V}}_{\beta}|_{U_i}$ for each triple $(U_i; V_\alpha, V_\beta)$ such that $U_i \subset V_\alpha \cap V_\beta$. In geometric situations, these {\em higher order patching data} always arise from uniqueness of the local models $\prescript{k}{}{\mathbf{V}}_{\alpha}$'s.

However, the biholomorphisms $\prescript{k}{}{\Psi}_{\alpha\beta,i}$'s do not satisfy the cocycle condition on $U_i \subset V_{\alpha\beta\gamma}:=V_{\alpha}\cap V_{\beta} \cap V_{\gamma}$, so we cannot simply glue the sheaves $\bva{k}_{\alpha}^*$'s together. Fortunately, the discrepancies are always captured by the exponential action of local sections of $\bva{k}^{*}_{\alpha}$'s. The key idea in \cite{chan2019geometry} is to consider the Thom-Whitney simplicial construction \cite{dupont1976simplicial, whitney2012geometric}, giving rise to a dg resolution $\polyv{k}^{*}_{\alpha}$ (as a sheaf of dgBV algebras) of $\bva{k}^{*}_{\alpha}$, which can be regarded as a simplicial replacement of the Dolbeault resolution. The upshot is that these sheaves $\polyv{k}^{*}_{\alpha}$'s of dgBV algebras, upon suitable modifications of the various operators like the differential and BV operator, can then be glued together to produce a global sheaf over $X$ whose global sections give an {\em almost dgBV algebra} $\polyv{}^{*}(X)$ (meaning that its differential squares to 0 only modulo $q$). Morally speaking, the sheaves $\polyv{k}^{*}_{\alpha}$'s can be glued because they are softer than the sheaves $\bva{k}^{*}_{\alpha}$'s.

To extend this construction to the case of a pair $(X, \mathfrak{C}^*)$, we just need to prescribe a local model for how the singular variety $X$ along with $\mathfrak{C}^*$ are being smoothed out over $\comp[q]$.
So we first assume that there is a local $k$-th order thickening $(\prescript{k}{}{\mathbf{V}}_{\alpha},\prescript{k}{}{\mathfrak{C}}^*_{\alpha})$ of each $(V_{\alpha},\mathfrak{C}^*|_{V_{\alpha}})$ over $\cfrk{k}$ for any $k \in \mathbb{N}$. This so-called {\em geometric lifting datum} (see Definition \ref{def:geometric_lifting_data_for_pair}) gives rise to a coherent sheaf $\gla{k}^{*}_{\alpha}$ of dgLa's (which would be a coherent sheaf of Lie algebras if $\mathfrak{C}^* = \mathfrak{F}$ is a complex concentrated in degree 0).
We further assume that there is a biholomorphism $\prescript{k}{}{\Psi}_{\alpha\beta,i} : \prescript{k}{}{\mathbf{V}}_{\alpha}|_{U_i} \rightarrow \prescript{k}{}{\mathbf{V}}_{\beta}|_{U_i}$ and a sheaf isomorphism $\prescript{k}{}{\Xi}_{\alpha\beta,i}: \prescript{k}{}{\mathfrak{C}}^*_{\alpha}|_{U_i} \rightarrow \prescript{k}{}{\mathfrak{C}}_{\beta}^*|_{U_i}$ compatible with $\prescript{k}{}{\Psi}_{\alpha\beta,i}$ for each triple $(U_i; V_\alpha, V_\beta)$ such that $U_i \subset V_\alpha \cap V_\beta$: 
$$
\xymatrix@1{ \prescript{k}{}{\mathfrak{C}}^*_{\alpha}|_{U_i} \ar[r]^{\prescript{k}{}{\Xi}_{\alpha\beta,i}} \ar[d]^{} & \prescript{k}{}{\mathfrak{C}}_{\beta}^*|_{U_i} \ar[d]^{}\\
	\prescript{k}{}{\mathbf{V}}_{\alpha}|_{U_i} \ar[r]^{\prescript{k}{}{\Psi}_{\alpha\beta,i}} & \prescript{k}{}{\mathbf{V}}_{\beta}|_{U_i}.}
$$
This so-called {\em geometric patching datum} (see Definition \ref{def:geometric_patching_data_of_pair}) gives (non-canonical) identifications of the local models $(\prescript{k}{}{\mathbf{V}}_{\alpha},\prescript{k}{}{\mathfrak{C}}^*_{\alpha})$ when their supports overlap.
As in \cite{chan2019geometry}, we do not require these patching data to satisfy the cocycle condition on $U_i \subset V_{\alpha\beta\gamma}:=V_{\alpha}\cap V_{\beta} \cap V_{\gamma}$ (which is the case in geometric situations) and be compatible for different $k$'s at this stage. Rather, we assume that the discrepancies are captured by the exponential action of local sections of the sheaves $\gla{k}^{*}_{\alpha}$'s.

 
We can then apply the same technique as in \cite{chan2019geometry} (with a weakened condition to be explained in \S \ref{sec:weaken_assumption}). Namely, by passing to the dg resolution $\twcp{k}^*_{\alpha}$ of $\gla{k}_{\alpha}$ for each $\alpha$, we can glue these local sheaves together to obtain a global sheaf whose global sections produce an almost dgLa $\polyv{k}^*(\gluep{},\mathfrak{C}^*)$ whose differential $\prescript{k}{}{\pdb} + \prescript{k}{}{d}$ (where $\prescript{k}{}{d}$ is the differential acting on $\bva{k}_{\alpha}^*$ that we constructed in \cite{chan2019geometry} and mentioned above) squares to $0$ only modulo $q$. The associated Maurer-Cartan equation
\begin{equation}\label{eqn:intro_mc_equation}
(\prescript{k}{}{\pdb} + \prescript{k}{}{d}) \prescript{k}{}{\varphi} + \half [ \prescript{k}{}{\varphi}, \prescript{k}{}{\varphi}] =0,
\end{equation}
where $[\cdot,\cdot]$ denotes the Lie bracket of $\polyv{k}^*(\gluep{},\mathfrak{C}^*)$, governs formal smoothings of the pair $(X,\mathfrak{C}^*)$.

The main result of this paper is as follows.
\begin{theorem}\label{thm:intro_main_theorem}
	Let $X$ be as in Theorem \ref{thm:intro_main_application} and $\mathfrak{C}^*$ be a bounded complex of locally free sheaves on $X$ equipped with local thinkening data as described in Definitions \ref{def:geometric_lifting_data_for_pair} and \ref{def:geometric_patching_data_of_pair}.
	If $\text{Ext}^2(\mathfrak{C}^*,\mathfrak{C}^*)_0  = 0$ and the pair $(X, \det \mathfrak{C}^*)$ is formally smoothable, then there exists a system of Maurer-Cartan elements $\{\prescript{k}{}{\varphi} \in \polyv{k}^*(\gluep{},\mathfrak{C}^*)\}_{k \in \mathbb{N}}$ such that $\prescript{k+1}{}{\varphi} \equiv \prescript{k}{}{\varphi} \ (\text{mod $q^{k+1}$})$. 
	In particular, if $H^2(X,\mathcal{O}_X) = 0$ and $\text{Ext}^2(\mathfrak{C}^*,\mathfrak{C}^*)_0  = 0$, then the same conclusion holds.
\end{theorem}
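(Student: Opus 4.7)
The plan is to first extend the Thom--Whitney/simplicial construction of \cite{chan2019geometry} to the pair setting. From the geometric lifting datum of Definition \ref{def:geometric_lifting_data_for_pair}, each local thickening $(\prescript{k}{}{\mathbf{V}}_\alpha, \prescript{k}{}{\mathfrak{C}}^*_\alpha)$ gives a coherent sheaf $\gla{k}^*_\alpha$ of dgLa's on $V_\alpha$ whose Maurer--Cartan elements govern first-order deformations of the pair on $V_\alpha$. Passing to the Thom--Whitney simplicial resolution $\twcp{k}^*_\alpha$ softens these local pieces, and the twist-by-exponential-of-local-sections trick from \cite{chan2019geometry} lets us glue them despite the failure of the cocycle condition for the patching data $\prescript{k}{}{\Psi}_{\alpha\beta,i}$ and $\prescript{k}{}{\Xi}_{\alpha\beta,i}$. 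The resulting global sheaf on $X$ produces an almost dgLa $\polyv{k}^*(\gluep{},\mathfrak{C}^*)$ whose differential $\prescript{k}{}{\pdb}+\prescript{k}{}{d}$ squares to zero modulo $q$, fitting into a tower with surjective reductions $\polyv{k+1}^*(\gluep{},\mathfrak{C}^*) \twoheadrightarrow \polyv{k}^*(\gluep{},\mathfrak{C}^*)$ modulo $q^{k+1}$.

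Given this machinery, I would produce the Maurer--Cartan elements by induction. Starting from $\prescript{0}{}{\varphi}=0$, assume $\prescript{k}{}{\varphi}$ has been constructed, pick any lift $\prescript{k+1}{}{\tilde{\varphi}} \in \polyv{k+1}^1(\gluep{},\mathfrak{C}^*)$, and form
\begin{equation*}
\prescript{k+1}{}{\mathfrak{o}} := (\prescript{k+1}{}{\pdb}+\prescript{k+1}{}{d})\prescript{k+1}{}{\tilde{\varphi}} + \tfrac{1}{2}[\prescript{k+1}{}{\tilde{\varphi}}, \prescript{k+1}{}{\tilde{\varphi}}].
\end{equation*}
A standard computation using the almost-differential condition and the level-$k$ Maurer--Cartan equation shows $\prescript{k+1}{}{\mathfrak{o}} \in q^{k+1}\polyv{0}^2(\gluep{},\mathfrak{C}^*)$ and that it is closed for the order-zero differential $\prescript{0}{}{\pdb}+\prescript{0}{}{d}$. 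The inductive step reduces to vanishing of its class in $H^2$ of the order-zero complex.

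The key geometric identification is that this $H^2$ computes $\text{Ext}^2(\mathfrak{C}^*,\mathfrak{C}^*)$, since $\polyv{0}^*(\gluep{},\mathfrak{C}^*)$ is a Thom--Whitney resolution of the local endomorphism dgLa of $\mathfrak{C}^*$ over the undeformed $X$. Under the trace splitting $\text{Ext}^2(\mathfrak{C}^*,\mathfrak{C}^*) \cong \text{Ext}^2(\mathfrak{C}^*,\mathfrak{C}^*)_0 \oplus H^2(X,\mathcal{O}_X)$ induced by the identity endomorphism, the hypothesis $\text{Ext}^2(\mathfrak{C}^*,\mathfrak{C}^*)_0 = 0$ kills the traceless component of $\prescript{k+1}{}{\mathfrak{o}}$. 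For the trace component, I would construct a morphism of almost dgLa's $\mathrm{tr}: \polyv{}^*(\gluep{},\mathfrak{C}^*) \to \polyv{}^*(\gluep{},\det\mathfrak{C}^*)$ induced fiberwise by the usual trace/determinant on local endomorphisms, and check that $\mathrm{tr}(\prescript{k+1}{}{\mathfrak{o}})$ is precisely the obstruction to extending the induced level-$k$ Maurer--Cartan element for $(X,\det\mathfrak{C}^*)$; the assumed formal smoothability of $(X,\det\mathfrak{C}^*)$ then forces this trace obstruction to vanish. Combining, $\prescript{k+1}{}{\mathfrak{o}}$ is exact, so correcting $\prescript{k+1}{}{\tilde{\varphi}}$ by an element of $q^{k+1}\polyv{0}^1$ gives $\prescript{k+1}{}{\varphi}$. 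The final clause follows because, when $H^2(X,\mathcal{O}_X)=0$, the rank-one obstruction theory for $(X,\det\mathfrak{C}^*)$ lies in $H^2(X,\mathcal{O}_X)=0$ at every order, so $(X,\det\mathfrak{C}^*)$ is automatically formally smoothable by the same inductive argument applied to the line-bundle case.

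The main obstacle I anticipate is the construction of the trace morphism $\mathrm{tr}$ at the level of the almost dgLa: one must verify that taking determinants on the local thickenings is compatible with the Thom--Whitney softening, with the exponential-of-local-section patching corrections, and with the twisted differentials and brackets arising in the global construction, so that $\mathrm{tr}$ is well defined and intertwines the Maurer--Cartan operators. A secondary delicate point is the cohomology identification of the order-zero almost dgLa with $\text{Ext}^*(\mathfrak{C}^*,\mathfrak{C}^*)$, which will need a double-complex/spectral-sequence comparison between the Thom--Whitney resolution and a genuine resolution computing $\text{Ext}$.
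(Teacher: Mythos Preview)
Your overall strategy---use the trace/determinant morphism to reduce to the line-bundle case---matches the paper's. But there is a genuine gap in your execution, and it is precisely the point you flag as ``secondary''.

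The identification you make of the order-zero cohomology is wrong. The sheaf $\gla{0}^*$ underlying $\polyv{0}^*(\gluep{},\mathfrak{C}^*)$ is \emph{not} the endomorphism dgLa of $\mathfrak{C}^*$: by construction (Definition~\ref{def:Lie_algebra_of_pairs} and the sequence~\eqref{eqn:achnor_map_sequence}) it sits in
\[
0 \to \shom^*_{\mathcal{O}_X}(\mathfrak{C}^*,\mathfrak{C}^*) \to \gla{0}^* \to \sla{0} \to 0,
\]
where $\sla{0}$ is the sheaf of relative log vector fields on $X$. So the obstruction space $H^2(\polyv{0}^*(\gluep{},\mathfrak{C}^*))=\mathbb{H}^2(X,\gla{0}^*)$ controls deformations of the \emph{pair} $(X,\mathfrak{C}^*)$ and contains a contribution from $H^2(X,\sla{0})$ (deformations of $X$) that your trace splitting $\text{Ext}^2\cong\text{Ext}^2_0\oplus H^2(X,\mathcal{O}_X)$ does not see at all. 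Your argument, as written, would only prove unobstructedness of $\mathfrak{C}^*$ over a \emph{fixed} $X$. The same error recurs in your final clause: the obstruction to deforming $(X,\det\mathfrak{C}^*)$ does not lie in $H^2(X,\mathcal{O}_X)$; only the \emph{relative} obstruction over deformations of $X$ does, and one still needs the nontrivial input from \cite{Felten-Filip-Ruddat} that $X$ itself is formally smoothable.

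The paper avoids this confusion by never trying to identify the full obstruction space. Instead it proves an abstract relative statement (Theorem~\ref{thm:main_theorem}): given a surjective morphism $\morph{}:\gla{}\to\bva{}$ of abstract deformation data with kernel $\kla$, if $\mathbb{H}^2(X,\kla^*)=0$ then the induced map of Maurer--Cartan simplicial functors is a smooth Kan fibration. Applying this to the trace $\gla{}\to\bva{}$ (for $(X,\det\mathfrak{C}^*)$), the kernel is $\shom(\mathfrak{C}^*,\mathfrak{C}^*)_0$, so the hypothesis is exactly $\text{Ext}^2(\mathfrak{C}^*,\mathfrak{C}^*)_0=0$; formal smoothability of $(X,\det\mathfrak{C}^*)$ supplies a compatible tower of MC elements in the base, and smoothness of the fibration lifts it. For the second clause one applies the same theorem once more to the anchor $\bva{}\to\sla{}$, whose kernel is $\mathcal{O}_X$, together with the smoothability of $X$ from \cite{Felten-Filip-Ruddat}. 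This relative formulation is what makes the compatibility check for $\mathrm{tr}$ that you anticipate (your ``main obstacle'') essentially formal: one only needs that $\morph{k}_\alpha$ is a surjection of sheaves of dgLa's intertwining the patching data, which follows directly from the local algebra of \S\ref{sec:abstract_algebra_for_deformation_of_pairs}.
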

 

In the case when $\mathfrak{C}^*$ is just one locally free sheaf $\mathfrak{F}$ concentrated in degree $0$, we can explicitly construct a geometric lifting datum (Definition \ref{def:geometric_lifting_data_for_pair}) and a geometric patching data (Definition \ref{def:geometric_patching_data_of_pair}) by trivializing $\mathfrak{F} = \bigoplus_{i=1}^r \mathcal{O}_{X}|_{V_{\alpha}} \cdot e_i$ and taking $\prescript{k}{}{\mathfrak{F}}_{\alpha} := \bigoplus_{i=1}^r \prescript{k}{}{\mathcal{O}}_{\alpha} \cdot e_i$ on a Stein open subset $V_{\alpha}$; see the 2nd paragraph of \S \ref{sec:geometric_gluing_from_mc} for details.
To prove Theorem \ref{thm:intro_main_application}, we construct
a geometric smoothing of $\mathfrak{F}$ from the Maurer-Cartan solution $\varphi = (\prescript{k}{}{\varphi})_{k\in \mathbb{N}}$ obtained in Theorem \ref{thm:intro_main_theorem} as follows. We take the dg resolution $\tf{k}^*_{\alpha}$ of the complex $\prescript{k}{}{\mathfrak{F}}_{\alpha}$ as a sheaf of complexes, and use the gluing morphisms $\gluep{k}_{\alpha\beta} : \tf{k}^*_{\alpha} \rightarrow \tf{k}^*_{\beta}$ (induced by gluing morphisms $\gluep{k}_{\alpha\beta}$ above) to produce a global sheaf of complexes $\prescript{k}{}{\mathfrak{F}}^*(\gluep{})$ with differential $\prescript{k}{}{\pdb}$ (notice that we do not have $\prescript{k}{}{d}$ because $\mathfrak{C}^*$ is concentrated in degree $0$) which squares to $0$ only modulo $q$. The Maurer-Cartan solution $\prescript{k}{}{\varphi}$, via the natural action of $\gla{k}_{\alpha}$ on $\prescript{k}{}{\mathfrak{F}}_{\alpha}$, gives a differential $\prescript{k}{}{\pdb}  + \prescript{k}{}{\varphi} \cdot$ for $\prescript{k}{}{\mathfrak{F}^*}(\gluep{})$ which squares to $0$ honestly. The cohomology sheaf $\prescript{k}{}{\mathfrak{F}}:=H^0(\prescript{k}{}{\mathfrak{F}}^*(\gluep{}) , \prescript{k}{}{\pdb}+ \prescript{k}{}{\varphi} \cdot)$ then gives a $k$-th order thickening of $\mathfrak{F}$ for each $k \in \mathbb{N}$, which are compatible in the sense that $\prescript{k+1}{}{\mathfrak{F}} \otimes_{(\cfrk{k+1})} \cfrk{k}= \prescript{k}{}{\mathfrak{F}}$. This produces the desired formal smoothing of $\mathfrak{F}$ of the pair $(X, \mathfrak{F})$, and hence proves Theorem \ref{thm:intro_main_application}. See \S \ref{sec:geometric_gluing_from_mc} for more details.


\subsection{Outline of the paper}
This paper is organized as follows.
In \S \ref{sec:abstract_algebra_for_deformation_of_pairs}, we recall the necessary abstract algebra from \cite{iacono2018deformations} for defining the dgLa which controls deformations of a pair.
In \S \ref{sec:gluing_construction}, we develop the abstract algebraic framework for constructing the almost dgLa $\polyv{k}^*(\gluep{},\mathfrak{C}^*)$ from prescribed abstract local deformation data, following the approach in \cite{chan2019geometry}. The main result is Theorem \ref{thm:main_theorem}, which is a general theorem on smoothness of Maurer-Cartan functors in this abstract setting.
In \S \ref{sec:deformation_of_pair_on_log}, we first review how to obtain the abstract local deformation data for a singular variety $X$ in \S \ref{sec:data_from_gross_siebert}. Then the construction of such data for a pair $(X, \mathfrak{C}^*)$ and the proof of Theorem \ref{thm:intro_main_theorem} using Theorem \ref{thm:main_theorem} are given in \S \ref{sec:pair_data_from_gross_siebert}. Finally, in \S \ref{sec:geometric_gluing_from_mc}, we investigate how to obtain a geometric smoothing of the pair $(X,\mathfrak{F})$, where $\mathfrak{F}$ is a locally free sheaf over $X$, and proves Theorem \ref{thm:intro_main_application}. 

\section*{Acknowledgement}
We would like to thank Simon Felten, Helge Ruddat and Yat-Hin Suen for very useful discussions, comments and suggestions.
We are also grateful to the anonymous referees for insightful comments and suggestions which helped to correct an error and largely improve the exposition of this paper.
K. Chan was supported by a grant of the Hong Kong Research Grants Council (Project No. CUHK14302617) and direct grants from CUHK. Z. N. Ma was partially supported by the Institute of Mathematical Sciences (IMS) and the Department of Mathematics at The Chinese University of Hong Kong.
\section*{Notation Summary}

\begin{notation}\label{not:bigrading_notation}
	For a $\inte^2$-graded vector space $V^{*,*} = \bigoplus_{p,q}V^{p,q}$, we write $V^{k} := \bigoplus_{p+q= k} V^{p,q}$, and $V^{*} := \bigoplus_{k} V^{k}$. We will simply write $V$ if we do not need to emphasize the gradings.
\end{notation}

\begin{notation}\label{not:universal_monoid}
	We fix a rank $s$ lattice $\blat$ together with a strictly convex $s$-dimensional rational polyhedral cone $Q_\real \subset \blat_\real:= \blat\otimes_\inte \real$. We let $Q := Q_\real \cap \blat$ and call it the {\em universal monoid}. We consider the ring $\cfr:=\comp[Q]$ and write a monomial element as $q^m \in \cfr$ for $m \in Q$, and consider the maximal ideal given by $\mathbf{m}:= \comp[Q\setminus \{0\}]$. 
	We let $\cfrk{k}:= \cfr / \mathbf{m}^{k+1}$ be the Artinian ring for each $k \in \mathbb{N}$, and $\hat{\cfr}:= \varprojlim_{k} \cfrk{k}$ be the completion of $\cfr$. We further equip $\cfr$, $\cfrk{k}$ and $\hat{\cfr}$ with the natural monoid homomorphism $Q \rightarrow \cfr$, $m \mapsto q^m$, giving them the structure of a {\em log ring} (see \cite[Definition 2.11]{gross2011real}); the corresponding log spaces will be denoted as $\logs$, $\logsk{k}$ and $\logsf$ respectively. In particular, we will call $\logsk{0}$ the standard $Q$-log point.
\end{notation}

Throughout this paper, we are often dealing with two \v{C}ech covers $\mathcal{V} = (V_\alpha)_{\alpha}$ and $\mathcal{U} = (U_i)_{i \in \mathbb{N}}$ at the same time and also $k$-th order thickenings, so we will adapt the following (rather unusual) notations from \cite{chan2019geometry}: The top left hand corner in a notation $\prescript{k}{}{\spadesuit}$ refers to the order of $\spadesuit$, while the bottom right hand corner is reserved for the \v{C}ech indices. We also write $\spadesuit_{\alpha_0 \cdots \alpha_{\ell}}$ for the \v{C}ech indices of $\mathcal{V}$ and $\spadesuit_{i_0 \cdots i_l}$ for the \v{C}ech indices of $\mathcal{U}$, and if they appear at the same time, we write $\spadesuit_{\alpha_0 \cdots \alpha_{\ell},i_0 \cdots i_l}$. 
\section{Abstract algebra for deformations of pairs}\label{sec:abstract_algebra_for_deformation_of_pairs}

Here we review the abstract algebra needed for the deformation theory of pairs from \cite{iacono2018deformations}.
First recall that a {\em differential graded Lie algebra} (or {\em dgLa}) is a triple 
$$(L^* , d , [\cdot,\cdot]),$$
where $L^* = \bigoplus_{i \in \inte} L^i$ is a graded vector space, $[\cdot,\cdot] : L^{*} \otimes L^* \rightarrow L^*$ is a graded skew-symmetric pairing satisfying the Jacobi identity $[a,[b,c]] + (-1)^{|a||b|+|a||c|} [b,[c,a]] + (-1)^{|a||c| + |b| |c|}[ c,[a,b]] = 0$ for homogeneous elements $a,b,c \in L^*$, and $d: L^{*} \rightarrow L^{*+1}$ is a degree $1$ differential satisfying $d^2 =0$ and the Leibniz rule $d[a,b] = [da,b] + (-1)^{|a|} [a,db]$ for homogeneous elements $a, b \in L^*$; here $|a|$ denotes the degree of a homogeneous element $a$.

Let $\mathcal{O}$ be a unitary $\cfr$-algebra and $(M^*,d_M)$ be a bounded cochain complex of free $\mathcal{O}$-modules of finite rank. Also let $\der{R}{\mathcal{O}}{}$ be the Lie algebra of $\cfr$-linear derivations on $\mathcal{O}$, and consider an $\cfr$-linear Lie algebra homomorphism $\iota:\sla{} \rightarrow \der{\cfr}{\mathcal{O}}{}$ for some Lie algebra $(\sla{},[\cdot,\cdot])$ over $\cfr$ (i.e. $\sla{}$ is an $\cfr$-module and $[\cdot,\cdot]$ is $\cfr$-linear). To simplify notations, we will write $h(r)$ instead of $\iota(h)(r)$ for $h \in \sla{}$ and $r \in \mathcal{O}$. The following definition is a slight modification of the one from \cite[p.1219, 2nd paragraph after proof of Corollary 3.3]{iacono2018deformations}:

\begin{definition}\label{def:Lie_algebra_of_pairs}
	We treat $\sla{}$ as a dgLa concentrated at degree $0$, and let
	$$
	\sla{}^*(M^*) := \{ (h,u) \in \sla{} \times \hom^*_{\cfr}(M^*,M^*) \mid u(rx) = h(r) x + r u(x) \ \text{for $r\in \mathcal{O}, \ x \in M^*$}  \},
	$$
	where the grading on $\sla{}^*(M^*)$ is inherited from $\hom^*_{\cfr}(M^*,M^*)$. We equip $\sla{}^*(M^*)$ with a dgLa structure by the formulas 
	$$d(h,u) := (0, du),\quad [(h_1,u_1),(h_2,u_2)]:= ([h_1,h_2],[u_1,u_2]),$$
	where $d u := d_M \circ u + (-1)^{|u|} u \circ d_M$.
	The natural projection from $\sla{}^*(M^*)$ to $\sla{}$ gives a surjective morphism $\anchor{} : \sla{}^*(M^*) \rightarrow \sla{}$ of dgLa's called the {\em anchor map}. 	
\end{definition}

From its definition, we have $\sla{}^i(M) = \hom^i_{\mathcal{O}}(M^*,M^*)$ if $i\neq 0$, and the exact sequence of dgLa's 
\begin{equation}\label{eqn:abstract_algebra_exact_sequence_of_deformation_of_pairs}
 0 \rightarrow \hom^*_{\mathcal{O}}(M^*,M^*) \rightarrow \sla{}^*(M^*) \rightarrow \sla{} \rightarrow 0.
\end{equation}

\begin{definition}[cf. Definition 2.9 in \cite{iacono2018deformations}]\label{def:algebra_automorphism_of_pair}
	With $(\mathcal{O},M^*)$ as above, let $\text{Aut}_{\cfr}(\mathcal{O},M^*)$ be its automorphism group which consists of pairs $(\Theta,b)$, where $\Theta: \mathcal{O} \rightarrow \mathcal{O}$ is an automorphism of $\cfr$-algebras and $b : M^* \rightarrow M^*$ is an automorphism of complexes of $\cfr$-modules, such that $b(rm) = \Theta(r) b(m)$ for $r \in \mathcal{O}$ and $m \in M^*$.   
\end{definition}

Following the discussion in \cite[\S 2]{iacono2018deformations}, given any nilpotent element $(h,u)$ in $\sla{}^0(M)$, its exponential $(\exp(h), \exp(u))$ (where we abuse notations and simply write $h$ for $\iota(h)$) gives an element in $\text{Aut}_{\cfr}(\mathcal{O},M^*)$. 

If $M$ is a free $\mathcal{O}$-module of rank $m$, we write $\det M := \bigwedge^m_{\mathcal{O}} M$. If $M^*$ is a bounded complex of finite rank free $\mathcal{O}$-modules, we set $\det M^* := \bigotimes_{\mathcal{O}}^{l=\text{ev}} \det M^l \otimes_{\mathcal{O}} \bigotimes_{\mathcal{O}}^{l=\text{odd}} \det (M^l)^{\vee}$ (here $^{\vee}$ refers to the dual $\mathcal{O}$-module). For any $u \in \hom^0_{\cfr}(M^*,M^*)$, we let $\trace(u) \in \hom_{\cfr}(\det M^*,\det M^*)$ be the natural induced map as defined in \cite[Definitions 2.7 and 2.8]{iacono2018deformations}. This gives a natural map
$$\trace: \sla{} \times \hom^*_{\cfr}(M^*,M^*) \rightarrow \sla{} \times \hom_{\cfr}(\det M^*,\det M^*),\quad (h,u) \mapsto (h,\trace(u)),$$
which induces a morphism $\trace: \sla{}^*(M^*) \rightarrow \sla{}^0(\det M^*)$ of dgLa's (where $\det M^*$ is treated as a complex concentrated in degree $0$). 

\section{Gluing construction of an almost dgla for smoothing pairs}\label{sec:gluing_construction}

In this section, we extend and apply the gluing construction developed in \cite{chan2019geometry}. Note that we will work with almost dgLa's here instead of the almost dgBV algebras in \cite{chan2019geometry}.

\subsection{Abstract local deformation and patching data}\label{sec:abstract_deformation_data}

Let $(X,\mathcal{O}_X)$ be a $d$-dimensional compact complex analytic space. We fix an open cover $\mathcal{V} = \{V_\alpha\}_\alpha$ of $X$ which consists of Stein open subsets $V_{\alpha} \subset X$. 
In geometric situations, there will be a local smoothing model $\prescript{k}{}{\mathbf{V}}_{\alpha}$ over $\cfrk{k}$ of each $V_{\alpha} \subset X$. 
To patch these local models together, we will need another Stein open cover $\mathcal{U}$ on $X$:
\begin{notation}\label{not:open_stein_covers}
	Fix, once and for all, a cover $\mathcal{U}$ of $X$ which consists of a countable collection of Stein open subsets $\mathcal{U} = \{U_i\}_{i \in \mathbb{N}}$ forming a basis of topology. We refer readers to \cite[Chapter IX Theorem 2.13]{demailly1997complex} for the existence of such a cover. Note that an arbitrary finite intersection of Stein open subsets remains Stein. 
\end{notation}

\begin{definition}\label{def:abstract_deformation_data}
	An {\em abstract deformation datum} $\bva{} = (\bva{0}^*, \{\bva{k}^*_{\alpha}\}_{k,\alpha}, \{\rest{k,l}_{\alpha}\}_{k\geq l,\alpha})$ consists of 
	\begin{itemize}
		\item a sheaf $\bva{0}^*$ of dgLa's with bounded degrees;
		\item for each $k \in \mathbb{Z}_{\geq 0}$ and $\alpha$, a sheaf $\bva{k}^*_{\alpha}$ of dgLa's with bounded degrees on $V_{\alpha}$ equipped with the structure of sheaf of $\cfrk{k}$-modules such that the structures $[\cdot,\cdot]$ and $d$ are $\cfrk{k}$-linear, and 
		\item for $k\geq l$ and each $\alpha$, a surjective degree-preserving morphism $\rest{k,l}_{\alpha} : \bva{k}^*_{\alpha} \rightarrow \bva{l}^*_{\alpha}$ of dgLa's which induces an isomorphism upon tensoring $\bva{k}^*_{\alpha}$ with $\cfrk{l}$
	\end{itemize}
	such that
	\begin{enumerate}
		\item $\bva{0}_{\alpha}^* = \bva{0}^*|_{V_{\alpha}}$;
		\item $\bva{k}^*_{\alpha}$ is flat over $\cfrk{k}$, i.e. the stalk $(\bva{k}_{\alpha}^*)_x$ is flat over $\cfrk{k}$ for all $x \in V_{\alpha}$;
		\item the adjoint homomorphism $\text{ad} : \bva{k}_{\alpha}^* \rightarrow \der{(\cfrk{k})}{\bva{k}_{\alpha}^*}{*}$ (where $\der{(\cfrk{k})}{}{*}$ denotes the graded space of $\cfrk{k}$-linear derivations) is injective; and
		\item $R\Gamma^j(V_{\alpha_0 \cdots \alpha_{\ell}},\bva{k}_{\alpha}) = 0$ and $R\Gamma^j(U_{i_0\cdots i_l},\bva{k}_{\alpha}) = 0$ for all $j>0$, $V_{\alpha_0 \cdots \alpha_{\ell}}:= V_{\alpha_0} \cap \cdots \cap V_{\alpha_{\ell}}$ and $U_{i_0 \cdots i_l} := U_{i_0} \cap \cdots \cap U_{i_l}$ with $U_{i_0}, \dots, U_{i_l} \subset V_\alpha$.
	\end{enumerate}
\end{definition}

Condition $(4)$ in Definition \ref{def:abstract_deformation_data} is a very mild assumption. It holds when, e.g., $\bva{k}_{\alpha}$ is a tensor product of coherent sheaves with a (possibly infinite dimensional) vector space. 

For smoothing of a singular variety $X$ (see \S \ref{sec:data_from_gross_siebert}), we take the sheaf $\sla{k}_{\alpha}$ of of relative log derivations over $\cfrk{k}$, which is a sheaf of Lie algebras that controls the local deformations of the log space $\prescript{k}{}{\mathbf{V}}_{\alpha}$.

When we consider smoothing of a pair $(X, \mathfrak{C}^*)$ (see \S \ref{sec:pair_data_from_gross_siebert}), we take the sheaf $\gla{k}_\alpha$ of dgLa's, obtained from $\sla{k}_{\alpha}$ using the algebraic construction in Definition \ref{def:Lie_algebra_of_pairs}, which controls local deformations of pairs.
Here is a caveat: For the case of pairs, condition $(3)$ in Definition \ref{def:abstract_deformation_data} does {\em not} hold, and we will need to suitably modify this definition as described in \S \ref{sec:weaken_assumption}. 

\begin{notation} Given two elements $\mathfrak{a} \in \bva{k_1}_{\alpha}^*$, $\mathfrak{b} \in \bva{k_2}_{\alpha}^*$ and $l \leq \text{min}\{k_1,k_2\}$, we say that $\mathfrak{a} = \mathfrak{b} \ \text{(mod $\mathbf{m}^{l+1}$)}$ if and only if $\rest{k_1,l}_{\alpha}(\mathfrak{a}) = \rest{k_2,l}_{\alpha}(\mathfrak{b})$.
\end{notation}

\begin{definition}\label{def:patching_datum}
	Given an abstract deformation datum $\bva{} = (\bva{0}^*, \{\bva{k}^*_{\alpha}\}_{k,\alpha}, \{\rest{k,l}_{\alpha}\}_{k\geq l,\alpha})$, 
	a {\em patching datum} $\patch{} = \{\patch{k}_{\alpha\beta,i}\}$ (with respect to $\mathcal{U}, \mathcal{V}$) consists of, for each $k \in \mathbb{N}$ and $(U_i; V_\alpha, V_\beta)$ with $U_i \subset V_{\alpha\beta} := V_{\alpha} \cap V_{\beta}$,
	a sheaf isomorphism $\patch{k}_{\alpha\beta,i}:\bva{k}_{\alpha}^*|_{U_i} \to \bva{k}_{\beta}^*|_{U_i}$ over $\cfrk{k}$ preserving $[\cdot,\cdot]$ and $d$ and fitting into the diagram
	$$
	\xymatrix@1{ \bva{k}_{\alpha}^*|_{U_i} \ar[r]^{\patch{k}_{\alpha\beta,i}} \ar[d]^{\rest{k,0}_{\alpha}} &   \bva{k}_{\beta}^*|_{U_i} \ar[d]^{\rest{k,0}_{\beta}}\\
		\bva{0}^*|_{U_i} \ar@{=}[r] &  \bva{0}^*|_{U_i}, 
	}
	$$
	such that: 
	\begin{enumerate}
		\item $\patch{k}_{\beta\alpha,i} = \patch{k}_{\alpha \beta,i}^{-1}$, $\patch{0}_{\alpha \beta,i} \equiv \text{id}$;
		\item for $k>l$ and $U_i \subset V_{\alpha\beta}$, there exists $\resta{k,l}_{\alpha\beta,i} \in \bva{l}_{\alpha}^{0}(U_i)$  with $\resta{k,l}_{\alpha\beta,i} = 0 \ \text{(mod $\mathbf{m}$)}$ such that 
		\begin{equation}\label{eqn:bva_different_order_comparison}
		\patch{l}_{\beta\alpha,i} \circ \rest{k,l}_{\beta} \circ \patch{k}_{\alpha\beta,i} = \exp\left([\resta{k,l}_{\alpha\beta,i},\cdot]\right) \circ \rest{k,l}_{\alpha};
		\end{equation}
		
		\item for $k\in \mathbb{N}$ and $U_i, U_j \subset V_{\alpha \beta}$, there exists $\patchij{k}_{\alpha\beta,ij} \in \bva{k}_{\alpha}^{0}(U_i\cap U_j)$ with $\patchij{k}_{\alpha\beta,ij} = 0 \ \text{(mod $\mathbf{m}$)}$ such that
		\begin{equation}\label{eqn:higher_order_U_i_U_j_different}
		\left(\patch{k}_{ \beta \alpha,j}|_{U_i \cap U_j}\right)\circ \left(\patch{k}_{\alpha \beta,i}|_{U_i \cap U_j}\right) = \exp\left([\patchij{k}_{\alpha\beta,ij}, \cdot]\right); and
		\end{equation}
		
		\item for $k\in \mathbb{N}$ and $U_i \subset V_{\alpha\beta\gamma}:=V_\alpha \cap V_\beta \cap V_\gamma$, there exists $\cocyobs{k}_{\alpha\beta\gamma,i} \in \bva{k}_{\alpha}^{0}(U_i)$  with $\cocyobs{k}_{\alpha\beta\gamma,i} = 0 \ \text{(mod $\mathbf{m}$)}$ such that 
		\begin{equation}\label{eqn:higher_order_cocycle_different}
		\left(\patch{k}_{\gamma \alpha,i}|_{U_i}\right) \circ \left(\patch{k}_{\beta \gamma,i}|_{U_i}\right) \circ \left(\patch{k}_{\alpha \beta ,i}|_{U_i}\right) = \exp\left([\cocyobs{k}_{\alpha\beta\gamma,i},\cdot]\right).
		\end{equation}
	\end{enumerate}
\end{definition}

In geometric situations such as smoothing of the variety $X$, the patching isomorphism $\patch{k}_{\alpha\beta, i}$ (note that this is actually denoted as $\patchh{k}_{\alpha\beta,i}$ in \S \ref{sec:data_from_gross_siebert}) is induced from the local uniqueness of the local smoothing model $\prescript{k}{}{\mathbf{V}}_{\alpha}$. Equations \eqref{eqn:bva_different_order_comparison}, \eqref{eqn:higher_order_U_i_U_j_different} and \eqref{eqn:higher_order_cocycle_different} say that local automorphisms of the local models are exponentiation of the Lie bracket with local vector fields $\resta{k,l}_{\alpha\beta,i}$'s, $\patchij{k}_{\alpha\beta,ij}$'s and $\cocyobs{k}_{\alpha\beta\gamma,i}$'s. The key point is that we do {\em not} require the patching isomorphisms $\patch{k}_{\alpha\beta,i}$'s to be compatible directly but rather the discrepancies are captured by the Lie bracket with local sections of the sheaves $\bva{k}_{\alpha}^*$'s.

\begin{definition}\label{def:morphism_of_deformation_data}
	A {\em morphism} $\morph{} = \{\morph{k}_{\alpha}\}_{k,\alpha} :  (\gla{}=\{\gla{k}^*_{\alpha}\}_{k,\alpha},\patchp{}= \{\patchp{k}_{\alpha\beta,i}\}) \rightarrow (\bva{}=\{\bva{k}^*_{\alpha}\}_{k,\alpha},\patch{}=\{\patch{k}_{\alpha\beta,i}\})$ consists of $\cfrk{k}$-linear morphisms $\morph{k}_{\alpha} : \gla{k}^*_{\alpha} \rightarrow \bva{k}^*_{\alpha}$ between sheaves of dgLa's over $V_{\alpha}$ satisfying the following conditions:
	\begin{enumerate}
		\item for $k\geq l$ and each $\alpha$, we have $ \rest{k,l}_{\alpha} \circ \morph{k}_{\alpha} = \morph{l}_{\alpha} \circ \restp{k,l}_{\alpha}$;
		\item for each $k$ and $U_i \subset V_{\alpha\beta}$, we have $\patch{k}_{\alpha\beta,i} \circ \morph{k}_{\alpha} = \morph{k}_{\beta} \circ \patchp{k}_{\alpha\beta,i}$;
		\item for $k>l$ and $U_i \subset V_{\alpha\beta}$, we have $\morph{l}_{\alpha}(\restap{k,l}_{\alpha\beta,i}) = \resta{k,l}_{\alpha\beta,i}$; 
		\item for each $k$ and $U_i,U_j \subset V_{\alpha\beta}$, we have $\morph{k}_{\alpha} (\patchijp{k}_{\alpha\beta,ij}) = \patchij{k}_{\alpha\beta,ij}$; and
		\item for each $k$ and $U_i \subset V_{\alpha\beta\gamma}$, we have $\morph{k}_{\alpha} (\cocyobsp{k}_{\alpha\beta\gamma,i}) = \cocyobs{k}_{\alpha\beta\gamma,i}$;
	\end{enumerate}
	here we have used $\restp{k,l}_{\alpha}$'s,  $\patchp{k}_{\alpha\beta,i}$'s, $\restap{k,l}_{\alpha\beta,i}$'s, $\patchijp{k}_{\alpha\beta,ij}$'s and $\cocyobsp{k}_{\alpha\beta\gamma,i}$'s to denote the deformation and patching data associated to $\gla{}$.
\end{definition}

In this paper, we will only be interested in the case that the morphism $\{\morph{k}_{\alpha}\}_{k,\alpha}$ is coming from either the trace map $\trace: \sla{}^*(M^*) \rightarrow \sla{}^0(\det M^*)$ or the anchor map $\alpha : \sla{}^*(M^*)  \rightarrow \sla{}^*$ in \S \ref{sec:abstract_algebra_for_deformation_of_pairs}. 

\begin{remark}\label{rem:surjective_morphism}
If we assume the injectivity of the adjoint map $\text{ad} : \bva{k}^*_{\alpha} \rightarrow \der{(\cfrk{k})}{\bva{k}^*_{\alpha}}{*}$ and surjectivity of the morphisms $\morph{k}_{\alpha}$'s, then conditions $(3)-(5)$ in Definition \ref{def:morphism_of_deformation_data} follow from conditions $(1)-(2)$ because the elements $\resta{k,l}_{\alpha\beta,i}$'s, $\patchij{k}_{\alpha\beta,ij}$'s and $\cocyobs{k}_{\alpha\beta\gamma,i}$'s are determined by the maps $\rest{k,l}_{\alpha}$'s and $\patch{k}_{\alpha\beta,i}$'s by the equations in Definition \ref{def:patching_datum} and the injectivity of $\text{ad}$.
\end{remark}

\subsection{Construction of the almost dgLa}\label{sec:construction_of_dgLa}

\subsubsection{Simplicial sets}\label{sec:simplicial_sets}
To construct the almost dgLa using simplicial methods, we first recall some standard definitions and facts on the simplicial sets $\mathcal{A}^*(\simplex_{\bullet})$ of polynomial differential forms with coefficients in $\comp$, following the notations from \cite[\S 3.1]{chan2019geometry}.

\begin{notation}\label{not:simplicial_set}
	We let $\text{Mon}$ (resp. $\text{sMon}$) be the category of finite ordinals $[n] = \{0,1,\dots,n\}$ in which morphisms are increasing maps (resp. strictly increasing maps).
	We denote by $\mathtt{d}_{i,n} : [n-1] \rightarrow [n]$ the unique strictly increasing map which skips the $i$-th element, and by $\mathtt{e}_{i,n} : [n+1] \rightarrow [n]$ be given by $\mathtt{e}_{i,n}(j) = j$ if $j \leq i$ and $\mathtt{e}_{i,n}(j) = j -1$ if $j>i$. 
\end{notation}

Note that every morphism in $\text{Mon}$ can be decomposed as a composition of the maps $\mathtt{d}_{i,n}$'s and $\mathtt{e}_{i,n}$'s, and any morphism in $\text{sMon}$ can be decomposed as a composition of the maps $\mathtt{d}_{i,n}$'s. 

\begin{definition}[\cite{weibel1995introduction}]\label{def:simplicial_and_cosimplicial_set}
	Let $\mathtt{C}$ be a category. A {\em (semi-)simplicial object in $\mathtt{C}$} is a contravariant functor $\mathtt{A}(\bullet) : \text{Mon} \rightarrow  \mathtt{C}$ (resp. $\mathtt{A}(\bullet) : \text{sMon} \rightarrow  \mathtt{C}$), and a {\em (semi-)cosimplicial object in $\mathtt{C}$} is a covariant function $\mathtt{A}(\bullet) : \text{Mon} \rightarrow \mathtt{C}$ (resp. $\mathtt{A}(\bullet) : \text{sMon} \rightarrow \mathtt{C}$).
\end{definition}

\begin{definition}[\cite{griffiths1981rational}]\label{def:simplicial_de_rham}
	Consider the standard $n$-simplex $\simplex_n := \{(x_0, \dots, x_n) \in \mathbb{R}^{n+1} \mid \sum_{i=0}^n x_i = 1\}$. The {\em space of polynomial differential forms with coefficients in $\comp$ on $\simplex_n$} is defined as the differential graded algebra (abbreviated as dga)
	$$
	\mathcal{A}^*(\simplex_n) := \frac{\text{Sym}^*\left(\comp \langle x_0,\dots,x_n,dx_0,\dots,dx_n \rangle \right)}{\left(\sum_{i=0}^n x_i -1, \sum_{i=0}^n dx_i\right)},
	$$
	where $\comp \langle x_0,\dots,x_n,dx_0,\dots,dx_n \rangle $ is the graded vector space generated by $x_i$'s and $dx_i$'s with $\deg(x_i) = 0$, $\deg(dx_i) =1$, and the degree $1$ differential $d$ is defined by $d(x_i) = dx_i$ and the Leibniz rule.
	
	Given $a : [n] \rightarrow [m]$ in $\text{Mon}$, we let $a^*:=\mathcal{A}(a) : \mathcal{A}^*(\simplex_m) \rightarrow \mathcal{A}^*(\simplex_n)$ be the unique dga morphism satisfying $a^*(x_j) = \sum_{i \in [n]: a(i) = j} x_i$ and $a^*(x_{j}) = 0$ if $j \neq a(i)$ for any $i \in [n]$. From this we obtain a simplicial object in the category of dga's, which we denote by $\mathcal{A}^*(\simplex_\bullet)$.
\end{definition}

\begin{notation}\label{not:de_rham_form_on_simplex_boundary}
	We denote by $\simplexbdy_n$ the boundary of $\simplex_n$, and let 
	\begin{equation}
	\mathcal{A}^*(\simplexbdy_n) : =\{(\alpha_0,\dots,\alpha_n) \mid \alpha_{i}  \in \mathcal{A}^*(\simplex_{n-1}), \ \mathtt{d}_{i,n-1}^* (\alpha_j) =\mathtt{d}_{j-1,n-1}^*(\alpha_i) \ \text{for $0 \leq i<j \leq n$} \}
	\end{equation}
	be the space of polynomial differential forms on $\simplexbdy_n$. There is a natural restriction map defined by $\beta|_{ \simplexbdy_n} := (\mathtt{d}_{0,n}^*(\beta),\dots,\mathtt{d}^*_{n,n}(\beta))$ for $\beta \in \mathcal{A}^*(\simplex_n)$. 
	Similarly, we let $\Lambda^k_n \subset \simplexbdy_n$ be the $k$-th horn, and
	\begin{equation}\label{eqn:differential_forms_on_k_horn}
	\mathcal{A}^*(\Lambda^k_n) : =\{(\alpha_0,\dots,\alpha_{k-1},\alpha_{k+1},\dots, \alpha_n) \mid \alpha_{i}  \in \mathcal{A}^*(\simplex_{n-1}), \ \mathtt{d}_{i,n-1}^* (\alpha_j) =\mathtt{d}_{j-1,n-1}^*(\alpha_i) \ \text{for $0 \leq i<j \leq n$} \}
	\end{equation}
	be the space of polynomial differential forms on $\Lambda^k_n$, with a natural restriction map $\beta|_{\Lambda^k_n}$ defined in a similar way.
\end{notation}

The following extension lemma will be frequently used in subsequent constructions:

\begin{lemma}[Lemma 9.4 in \cite{griffiths1981rational}]\label{lem:simplicial_de_rham_extension}
	For any $\vec{\alpha} = (\alpha_0,\dots,\alpha_n) \in \mathcal{A}^*(\simplexbdy_n)$, there exists $\beta \in \mathcal{A}^*(\simplex_n)$ such that $\beta|_{\simplexbdy_n} = \vec{\alpha}$. 
\end{lemma}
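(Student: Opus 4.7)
The plan is to prove the extension lemma by induction on $n$ using a face-by-face extension strategy. The base case $n=0$ is vacuous since $\simplexbdy_0 = \emptyset$. For the inductive step, I will extend across the faces $F_0, F_1, \ldots, F_n$ in sequence, where $F_i := \{x_i = 0\} \subset \simplex_n$ corresponds to the $i$-th face map $\mathtt{d}_{i,n}$.

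The first ingredient I would construct is a ``bare'' extension map $E_i : \mathcal{A}^*(F_i) \to \mathcal{A}^*(\simplex_n)$ satisfying $E_i(\alpha)|_{F_i} = \alpha$. A form on $F_i$ can be represented by a polynomial in the variables $\{x_j\}_{j\neq i}$ and $\{dx_j\}_{j\neq i}$ modulo the relations $\sum_{j\neq i} x_j = 1$ and $\sum_{j\neq i} dx_j = 0$; by using these relations to eliminate one variable (say $x_{i+1 \bmod (n+1)}$), one gets a canonical polynomial representative that can then be viewed as an element of $\mathcal{A}^*(\simplex_n)$ modulo $\sum_j x_j = 1$ and $\sum_j dx_j = 0$. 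A direct calculation shows this defines a section of the restriction $\mathcal{A}^*(\simplex_n) \to \mathcal{A}^*(F_i)$.

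Given the compatible family $\vec{\alpha} = (\alpha_0, \ldots, \alpha_n)$, the plan is then to set $\beta_0 := E_0(\alpha_0)$ and replace $\vec{\alpha}$ by $\vec{\alpha}' := \vec{\alpha} - \beta_0|_{\simplexbdy_n}$, so that $\alpha'_0 = 0$; by the compatibility conditions $\mathtt{d}_{0,n-1}^*(\alpha_j) = \mathtt{d}_{j-1,n-1}^*(\alpha_0)$, each $\alpha'_i$ for $i \geq 1$ vanishes on the trace $F_0 \cap F_i$. Inductively, at step $i$ one extends $\alpha^{(i)}_i$, which by iterated compatibility vanishes on $F_i \cap (F_0 \cup \cdots \cup F_{i-1})$, to an element $\beta_i \in \mathcal{A}^*(\simplex_n)$ that vanishes on $F_0 \cup \cdots \cup F_{i-1}$. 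The key algebraic input is that a polynomial form $\gamma$ on $F_i$ which vanishes on the subsimplex $\{x_{j_1} = \cdots = x_{j_r} = 0\} \cap F_i$ admits a decomposition
\[
\gamma = \sum_{s=1}^r \bigl( x_{j_s} u_s + dx_{j_s} \wedge v_s \bigr)
\]
for polynomial forms $u_s, v_s$ on $F_i$; one can then define $\beta_i := \sum_s ( x_{j_s} E_i(u_s) + dx_{j_s} \wedge E_i(v_s))$. Setting $\beta := \beta_0 + \beta_1 + \cdots + \beta_n$, the telescoping of restrictions gives $\beta|_{\simplexbdy_n} = \vec{\alpha}$.

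The main technical obstacle is establishing the factorization statement in the paragraph above: it amounts to a Koszul-type exactness result for polynomial forms on a simplex relative to a subsimplex, which requires careful bookkeeping of the relations $\sum x_j = 1$ and $\sum dx_j = 0$ so that ``vanishing on $\{x_j = 0\}$'' is correctly interpreted modulo these relations before a factor of $x_j$ (or $dx_j$) is extracted. Once this is in place, the face-by-face extension goes through and the lemma follows. As an aside, this proof pattern is the one hinted at by the simplicial Kan-like extension philosophy used throughout \cite{chan2019geometry}, and the analogous statement for the horn $\Lambda^k_n$ in \eqref{eqn:differential_forms_on_k_horn} will follow from exactly the same argument by omitting the step that extends across $F_k$.
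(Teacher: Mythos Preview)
The paper does not prove this lemma; it simply cites \cite[Lemma 9.4]{griffiths1981rational}. Your overall strategy---face-by-face extension with successive corrections---is exactly the classical argument found there, and the reduction to extending a form $\gamma$ on $F_i$ that vanishes on $F_i\cap(F_0\cup\cdots\cup F_{i-1})$ to a form on $\simplex_n$ vanishing on $F_0\cup\cdots\cup F_{i-1}$ is the right one.

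However, your ``key algebraic input'' is misstated and, as written, does not give what you need. You claim that if $\gamma$ vanishes on the single subsimplex $\{x_{j_1}=\cdots=x_{j_r}=0\}\cap F_i$ then $\gamma=\sum_s(x_{j_s}u_s+dx_{j_s}\wedge v_s)$, and you set $\beta_i:=\sum_s(x_{j_s}E_i(u_s)+dx_{j_s}\wedge E_i(v_s))$. But (i) the hypothesis you actually have is that $\gamma$ vanishes on the \emph{union} $\bigcup_{j<i}(F_j\cap F_i)$, not on the intersection $\{x_0=\cdots=x_{i-1}=0\}\cap F_i$; and (ii) even granting your decomposition, the resulting $\beta_i$ does not vanish on each $F_{j_s}$: restricting to $F_{j_1}$ kills only the $s=1$ summand and leaves the others. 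Vanishing on the intersection corresponds to lying in the \emph{sum} of the ideals $(x_{j_s},dx_{j_s})$, whereas you need $\gamma$ to lie in their \emph{intersection}, which for these coordinate ideals (after eliminating one variable to pass to a free graded-commutative polynomial algebra) equals the \emph{product} $\prod_{j<i}(x_j,dx_j)$. The correct step is: write $\gamma$ as a sum of monomials each containing at least one factor from every pair $(x_j,dx_j)$ with $j<i$; then the obvious extension $E_i(\gamma)$ (inclusion in the chosen coordinates) already lies in $\prod_{j<i}(x_j,dx_j)\subset\mathcal{A}^*(\simplex_n)$ and hence vanishes on each $F_j$ for $j<i$. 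With this fix your argument goes through, and your remark that the horn case follows by omitting the $F_k$-step is correct.
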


\begin{notation}\label{not:homotopy_simplex_notation}
	We let $\hsimplex_{m,n} := \simplex_m \times \simplex_n$, and 
	\begin{equation}\label{eqn:homotopy_simplex_de_rham}
	\mathcal{A}^*(\hsimplex_{m,n}) := \mathcal{A}^*(\simplex_m) \otimes_{\comp} \mathcal{A}^*(\simplex_n).
	\end{equation}
	There are two sets of restriction maps: $\mathtt{d}_{j,m}^* : \mathcal{A}^*(\hsimplex_{m,n}) \rightarrow \mathcal{A}^*(\hsimplex_{m-1,n})$ induced from that on $\simplex_m$, and $\mathtt{d}_{j,n}^* : \mathcal{A}^*(\hsimplex_{m,n})  \rightarrow \mathcal{A}^*(\hsimplex_{m,n-1})$ induced from that on $\simplex_n$.  
\end{notation}

\begin{notation}\label{not:de_rham_form_on_homotopy_simplex_boundary}
	We denote by $\hsimplexbdy_n$ the boundary of $\hsimplex_n$, and let
	\begin{equation*}
	\mathcal{A}^*(\hsimplexbdy_{m,n}) := \left\{\displaystyle{\substack{(\alpha_0,\dots,\alpha_m, \beta_0 ,\dots ,\beta_n)\\\alpha_{i} \in \mathcal{A}^*(\hsimplex_{m-1,n}),\ \beta_{i} \in \mathcal{A}^*(\hsimplex_{m,n-1}) }} \ \Bigg|\ \displaystyle{\substack{ 	\mathtt{d}_{i,m-1}^* (\alpha_j) = \mathtt{d}_{j-1,m-1}^*(\alpha_i) \text{ for $0 \leq i<j \leq m$} \\
			\mathtt{d}_{i,n-1}^* (\beta_j) = \mathtt{d}_{j-1,n-1}^*(\beta_i) \text{ for $0 \leq i<j \leq n$} \\
			\mathtt{d}_{i,n}^* (\alpha_j) = \mathtt{d}_{j,m}^*(\beta_i) \text{ for $0\leq i\leq n$ and $0\leq j \leq m$}}}\right\}
	\end{equation*}
	be the space of polynomial differential forms on $\hsimplexbdy_n$. There is a natural restriction map defined by $\gamma|_{ \hsimplexbdy_{m,n}} := (\mathtt{d}_{0,m}^*(\gamma),\dots,\mathtt{d}^*_{m,m}(\gamma), \mathtt{d}_{0,n}^*(\gamma),\dots,\mathtt{d}_{n,n}^*(\gamma))$ for $\gamma \in \mathcal{A}^*(\hsimplex_{m,n})$. 
\end{notation}
\begin{lemma}\label{lem:homotopy_simplicial_de_rham_extension}
	For any $(\alpha_0,\dots,\alpha_m, \beta_0,\dots,\beta_n) \in \mathcal{A}^*(\hsimplexbdy_{m,n})$, there exists $\gamma \in \mathcal{A}^*(\hsimplex_{m,n})$ such that $\gamma|_{\hsimplexbdy_{m,n}} = (\alpha_0,\dots,\alpha_m, \beta_0,\dots,\beta_n)$. 
\end{lemma}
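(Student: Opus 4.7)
The plan is to reduce the lemma to two successive applications of Lemma \ref{lem:simplicial_de_rham_extension}, exploiting the identification $\mathcal{A}^*(\hsimplex_{m,n}) = \mathcal{A}^*(\simplex_m) \otimes_\comp \mathcal{A}^*(\simplex_n)$ so that an $\mathcal{A}^*(\simplex_n)$-valued (resp.\ $\mathcal{A}^*(\simplex_m)$-valued) extension on $\simplex_m$ (resp.\ $\simplex_n$) automatically lands in $\mathcal{A}^*(\hsimplex_{m,n})$. The key observation is that the extension furnished by Lemma \ref{lem:simplicial_de_rham_extension} is constructed via a $\comp$-linear homotopy operator at the level of polynomial forms, so the same proof extends to forms with coefficients in any $\comp$-vector space.

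First, I would extend the $\alpha$-part in the $\simplex_m$-direction. Viewing the compatible tuple $(\alpha_0,\dots,\alpha_m)$ as an element of $\mathcal{A}^*(\simplexbdy_m) \otimes_\comp \mathcal{A}^*(\simplex_n)$, a coefficient version of Lemma \ref{lem:simplicial_de_rham_extension} produces $\tilde\alpha \in \mathcal{A}^*(\simplex_m) \otimes_\comp \mathcal{A}^*(\simplex_n) = \mathcal{A}^*(\hsimplex_{m,n})$ with $\mathtt{d}^*_{j,m}(\tilde\alpha) = \alpha_j$ for $0 \leq j \leq m$. Next, I would modify the $\beta$-data by setting
$$\tilde\beta_i := \beta_i - \mathtt{d}^*_{i,n}(\tilde\alpha) \in \mathcal{A}^*(\hsimplex_{m,n-1}), \qquad 0 \leq i \leq n.$$
A short bookkeeping check using the cosimplicial identity $\mathtt{d}^*_i \mathtt{d}^*_j = \mathtt{d}^*_{j-1} \mathtt{d}^*_i$ for $i<j$ (applied on the respective $\simplex_m$ and $\simplex_n$ factors, which commute with each other) shows that the $\tilde\beta_i$'s still satisfy the internal compatibility $\mathtt{d}^*_{i,n-1}(\tilde\beta_j) = \mathtt{d}^*_{j-1,n-1}(\tilde\beta_i)$ for $i<j$, and moreover that $\mathtt{d}^*_{j,m}(\tilde\beta_i) = \mathtt{d}^*_{i,n}(\alpha_j) - \mathtt{d}^*_{i,n}(\alpha_j) = 0$. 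Thus each $\tilde\beta_i$ vanishes on $\simplexbdy_m \times \simplex_{n-1}$.

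Second, I would extend the modified $\tilde\beta$-data in the $\simplex_n$-direction while preserving this vanishing. Let $I := \ker(\mathcal{A}^*(\simplex_m) \to \mathcal{A}^*(\simplexbdy_m))$ denote the $\comp$-subspace of forms on $\simplex_m$ that vanish on $\simplexbdy_m$. The previous step shows $(\tilde\beta_0,\dots,\tilde\beta_n) \in \mathcal{A}^*(\simplexbdy_n) \otimes_\comp I$, and another application of (the coefficient version of) Lemma \ref{lem:simplicial_de_rham_extension}, now with values in $I$, yields $\tilde\gamma \in \mathcal{A}^*(\simplex_n) \otimes_\comp I \subset \mathcal{A}^*(\hsimplex_{m,n})$ with $\mathtt{d}^*_{i,n}(\tilde\gamma) = \tilde\beta_i$ and, by construction, $\mathtt{d}^*_{j,m}(\tilde\gamma) = 0$ for all $j$.

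Finally, I would set $\gamma := \tilde\alpha + \tilde\gamma$. The two restriction identities
$$\mathtt{d}^*_{j,m}(\gamma) = \alpha_j + 0 = \alpha_j, \qquad \mathtt{d}^*_{i,n}(\gamma) = \mathtt{d}^*_{i,n}(\tilde\alpha) + \tilde\beta_i = \beta_i$$
then give $\gamma|_{\hsimplexbdy_{m,n}} = (\alpha_0,\dots,\alpha_m,\beta_0,\dots,\beta_n)$ as required. The only non-routine point in the argument is the ``coefficient version'' of Lemma \ref{lem:simplicial_de_rham_extension}; I expect this to pose no real obstacle, since one can either apply the lemma componentwise with respect to any $\comp$-basis of the coefficient space or, more cleanly, observe that the explicit homotopy operator in Griffiths--Morgan is $\comp$-linear and hence tensors with the identity on an arbitrary coefficient space.
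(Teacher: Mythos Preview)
Your proof is correct and is precisely a fleshed-out version of what the paper leaves implicit: the paper simply remarks that this variation of Lemma~\ref{lem:simplicial_de_rham_extension} ``can be proven by the same technique as in \cite[Lemma 9.4]{griffiths1981rational}'' without giving any details. Your two-stage reduction---first extending the $\alpha$-tuple over $\simplex_m$ with coefficients in $\mathcal{A}^*(\simplex_n)$, then extending the corrected $\tilde\beta$-tuple over $\simplex_n$ with coefficients in the kernel $I$---is exactly the inductive face-by-face extension underlying the Griffiths--Morgan argument, organized so that the two simplicial directions are handled separately; your observation that the extension operator is $\comp$-linear and hence tensors with any coefficient space is the right justification for the ``coefficient version'' you invoke.
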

This variation of Lemma \ref{lem:simplicial_de_rham_extension} can be proven by the same technique as in \cite[Lemma 9.4]{griffiths1981rational}.

\subsubsection{Gluing morphisms}\label{sec:gluing_morphism}

On $V = V_{\alpha_0 \cdots \alpha_{\ell}}$, we consider the covering $\mathcal{U}  = \mathcal{U}_{\alpha_0 \cdots \alpha_{\ell}} := \{ U_{i} \mid U_{i} \subset V_{\alpha_0 \cdots \alpha_{\ell}}$ parametrized by the mult-index set $\mathcal{I} := \{ (i_0,\dots,i_l) \mid U_{i_j} \in \mathcal{U}_{\alpha_0 \cdots \alpha_{\ell}} \}$. Assuming that $\mathcal{U}$ is an acyclic cover for a sheaf $\bva{}^p$ on $V$ for each $p$, we have the following definition. 

\begin{definition}[see e.g. \cite{whitney2012geometric, dupont1976simplicial, fiorenza2012differential}]\label{def:thom_whitney_general}
	The {\em Thom-Whitney complex} is defined as $\twc{}^{*,*}(\bva{}) := \bigoplus_{p,q} \twc{}^{p,q}(\bva{})$, where
	\begin{equation*}
	\twc{}^{p,q}(\bva{}) := \left\{ (\varphi_{i_0 \cdots i_l})_{(i_0,\dots,  i_l) \in \mathcal{I}} \ \big|\ \varphi_{i_0 \cdots i_l} \in \mathcal{A}^q(\simplex_l) \otimes_\comp \bva{}^p(U_{i_0\cdots i_l}), \
	\mathtt{d}_{j,l}^* (\varphi_{i_0 \cdots i_l}) = \varphi_{i_0 \cdots \hat{i}_j \cdots i_l} |_{U_{i_0 \cdots i_l}} \right\}.
	\end{equation*}
	It is a dgLa with the Lie bracket $[\cdot,\cdot]$ and differential $\pdb+d$ defined component-wise by
	\begin{align*}
	[\alpha_I \otimes v_I , \beta_I \otimes w_I ] := (-1)^{|v_I| |\beta_I|} (\alpha_I \wedge \beta_I) \otimes [v_I,w_I], \\
	\pdb (\alpha_I \otimes v_I) := (d\alpha_I) \otimes v_I ,\quad d(\alpha_I \otimes v_I) = (-1)^{|\alpha_I|} \alpha_I  \otimes (d v_I)
	\end{align*}
	for $\alpha_I, \beta_I \in \mathcal{A}^*(\simplex_l)$ and $v_I, w_I \in \bva{}^*(U_I),$ where $l = |I| - 1$.
\end{definition}

The complex $\twc{}^{p,*}(\bva{})$ (resp. the total complex $\twc{}^*(\bva{})$) is quasi-isomorphic to the \v{C}ech complex $\check{\mathcal{C}}^*(\mathcal{U},\bva{}^p)$ (resp. the total \v{C}ech complex $\check{\mathcal{C}}^*(\mathcal{U},\bva{}^*)$). 

\begin{remark}
	We use the notation $\pdb$ since it plays the role of the Dolbeault operator in the classical deformation theory of complex manifolds.
\end{remark}

\begin{notation}\label{not:local_thom_whitney_complex}
Equipping $\bva{}^* := \mathcal{A}^*(\simplex_n) \otimes \bva{k}^*_{\alpha_i}|_{V_{\alpha_0 \cdots \alpha_{\ell}}}$ with the natural dgLa structure $([\cdot,\cdot],\prescript{k}{}{d}_{\alpha_i,\simplex_n})$, 
we let $\twc{k}^{*,*}_{\alpha_i;\alpha_0 \cdots \alpha_{\ell}}(\simplex_n) := \twc{}^{*,*}(\bva{})$, which is equipped with the differential $\prescript{k}{}{\pdb}_{\alpha_i} + \prescript{k}{}{d}_{\alpha_i,\simplex_n}$ and the Lie bracket $[\cdot,\cdot]$. It is naturally equipped with the face map 
$$\mathtt{d}_{j,n}^* : \twc{k}^{*,*}_{\alpha_i;\alpha_0 \cdots \alpha_{\ell}}(\simplex_n) \rightarrow \twc{k}^{*,*}_{\alpha_i;\alpha_0 \cdots \alpha_{\ell}}(\simplex_{n-1})$$
and the restriction map $\restmap_{\alpha_j}$ defined component-wise by
	$$
	\restmap_{\alpha_j} \left( (\varphi_{I})_{I \in \mathcal{I}} \right) = (\varphi_I)_{I \in \mathcal{I}'}
	$$
	for $(\varphi_I)_{I\in \mathcal{I}} \in  \twc{k}^{*,*}_{\alpha_i; \alpha_0 \cdots \widehat{\alpha_j} \cdots \alpha_\ell} (\simplex_n)$, where $\mathcal{I}' = \{ (i_0,\dots,i_l) \in \mathcal{I} \mid U_{i_j} \subset V_{\alpha_0\cdots \alpha_{\ell}} \}$.
\end{notation}

For each $k \in \mathbb{N}$ and any pair $V_\alpha, V_\beta \in \mathcal{V}$, a {\em gluing isomorphism} 
\begin{equation}\label{eqn:g_alpha_beta_map}
\glue{k}_{\alpha \beta }(\simplex_n) : \twc{k}^{*,*}_{\alpha;\alpha\beta }(\simplex_n) \rightarrow \twc{k}^{*,*}_{\beta ; \alpha \beta}(\simplex_n)
\end{equation}
is a collection of maps $(\glue{k}_{\alpha\beta, I}(\simplex_n))_{I\in \mathcal{I}}$ such that for $\varphi =(\varphi_I)_{I\in \mathcal{I}} \in \twc{k}^{*,*}_{\alpha;\alpha \beta}(\simplex_n)$ with $\varphi_I \in \mathcal{A}^*(\simplex_l) \otimes \mathcal{A}^*(\simplex_n)\otimes \bva{k}^*_\alpha(U_I)$, we have $\left( \glue{k}_{\alpha\beta}(\simplex_n)(\varphi) \right)_I = \glue{k}_{\alpha\beta, I}(\simplex_n)(\varphi_I)$. It is required to preserve the algebraic structures and satisfy the following condition:
\begin{condition}\label{assum:induction_hypothesis}
	\begin{enumerate}
		\item for $U_i \subset V_\alpha \cap V_\beta$, we have 
		\begin{equation}\label{eqn:alpha_beta_gluing_explicit_form_at_point}
		\glue{k}_{\alpha\beta, i}(\simplex_n) = \exp([ \iauto{k}_{\alpha\beta,i}(\simplex_n), \cdot ]) \circ \patch{k}_{\alpha\beta,i}
		\end{equation}
		for some element $\iauto{k}_{\alpha\beta,i}(\simplex_n) \in \mathcal{A}^0(\simplex_n) \otimes \bva{k}^{0}_{\beta}(U_i)$ with $\iauto{k}_{\alpha\beta,i}(\simplex_n) = 0 \ \text{(mod $\mathbf{m}$)}$;
		
		\item for $U_{i_0}, \dots, U_{i_l} \subset V_\alpha \cap V_{\beta}$, we have 
		\begin{equation}\label{eqn:alpha_beta_gluing_explicit_form_on_simplex}
		\glue{k}_{\alpha\beta,i_0\cdots i_l}(\simplex_n) = \exp([\sauto{k}_{\alpha\beta, i_0 \cdots i_l}(\simplex_n),\cdot]) \circ \left(\glue{k}_{\alpha\beta,i_0}(\simplex_n)|_{U_{i_0 \cdots i_l}}\right),
		\end{equation}
		for some element $\sauto{k}_{\alpha\beta, i_0 \cdots i_l} \in \mathcal{A}^0(\simplex_l) \otimes \mathcal{A}^0(\simplex_n)\otimes  \bva{k}^{0}_\beta(U_{i_0 \cdots i_l})$ with $\sauto{k}_{\alpha\beta, i_0 \cdots i_l}(\simplex_n) = 0 \ \text{(mod $\mathbf{m}$)}$;
		
		\item the elements $\sauto{k}_{\alpha\beta, i_0 \cdots i_l}$'s satisfy the relation: 
		\begin{equation}\label{eqn:alpha_beta_gluing_explicit_form_on_simplex_boundary_relation}
		\mathtt{d}_{j,l}^*( \sauto{k}_{\alpha\beta, i_0 \cdots i_l}(\simplex_n)) =
		\begin{dcases}
		\sauto{k}_{\alpha\beta, i_0 \cdots \widehat{i_j} \cdots i_l}(\simplex_n) & \text{for $j>0$},\\
		\sauto{k}_{\alpha\beta, \widehat{i_0}  \cdots i_l}(\simplex_n) \bchprod \autoij{k}_{\alpha\beta, i_0 i_1}(\simplex_n)  & \text{for $j=0$}, 
		\end{dcases}
		\end{equation}
		where $\bchprod$ refers to the Baker-Campbell-Hausdorff product, and $\autoij{k}_{\alpha\beta, i_0 i_1}(\simplex_n)\in \mathcal{A}^0(\simplex_n) \otimes \bva{k}_{\beta}^{0}(U_{i_0 i_1})$ is the unique element such that
		$$
		\exp([\autoij{k}_{\alpha\beta,i_0i_1}(\simplex_n), \cdot]) \glue{k}_{\alpha \beta, i_0}(\simplex_n) = \glue{k}_{\alpha \beta, i_1}(\simplex_n).
		$$
	\end{enumerate}
\end{condition}

\begin{remark}\label{not:simplex_notation_simplify}
	Note that there are {\em two different} simplices in play in Notation \ref{not:local_thom_whitney_complex} and Condition \ref{assum:induction_hypothesis}: the $l$-simplex $\simplex_l$ in defining the Thom-Whitney complex $\twc{}^{p,q}(\bva{})$ in Definition \ref{def:thom_whitney_general}, and another $n$-simplex $\simplex_n$ in defining $\bva{}^*$ and $\twc{k}^{*,*}_{\alpha_i;\alpha_0 \cdots \alpha_{\ell}}(\simplex_n)$ in Notation \ref{not:local_thom_whitney_complex}. To simplify notations, we will suppress the dependence on $\simplex_n$ in the rest of this paper when there is no danger of confusion.
\end{remark}

Fixing $\simplex_n$ (which we omit the following notations), then for any triple $V_\alpha, V_\beta, V_\gamma \in \mathcal{V}$, we define the {\em restriction of $\glue{k}_{\alpha \beta}$ to $\twc{k}^{*,*}_{\alpha;\alpha\beta\gamma}$} as the unique map $\glue{k}_{\alpha \beta}:\twc{k}^{*,*}_{\alpha;\alpha\beta\gamma}\to \twc{k}^{*,*}_{\beta;\alpha\beta\gamma}$ that fits into the following diagram
$$
\xymatrix@1{ \twc{k}^{*,*}_{\alpha;\alpha\beta} \ar[r]^{\restmap_\gamma} \ar[d]^{\glue{k}_{\alpha \beta}} & \twc{k}^{*,*}_{\alpha;\alpha\beta\gamma} \ar[d]^{\glue{k}_{\alpha \beta}}\\
	\twc{k}^{*,*}_{\beta;\alpha\beta} \ar[r]^{\restmap_\gamma} & \twc{k}^{*,*}_{\beta;\alpha\beta\gamma}.}
$$

\begin{definition}\label{def:compatible_gluing_morphism}
	For a fixed $\simplex_n$, a collection $\glue{}(\simplex_n) = (\glue{k}_{\alpha \beta}(\simplex_n))_{k,\alpha\beta}$ (or simply  $\glue{} = (\glue{k}_{\alpha \beta})_{k,\alpha\beta}$ when the dependence on $\simplex_n$ is clear) satisfying Condition \ref{assum:induction_hypothesis} is said to be a {\em compatible gluing morphism over $\simplex_n$} if in addition the following conditions are satisfied:
	\begin{enumerate}
		\item $\glue{0}_{\alpha\beta} = \text{id}$ for all $\alpha, \beta$;
		
		\item (compatibility between different orders) for each $k \in \mathbb{N}$ and any pair $V_\alpha, V_\beta \in \mathcal{V}$,
		\begin{equation}\label{eqn:g_alpha_beta_order_compatibility}
		\glue{k}_{\alpha \beta} \circ \rest{k+1,k}_{\alpha} = \rest{k+1,k}_{\beta} \circ \glue{k+1}_{\alpha \beta};
		\end{equation}
		
		\item (cocycle condition) for each $k \in \mathbb{N}$ and any triple $V_\alpha, V_\beta, V_\gamma \in \mathcal{V}$,
		\begin{equation}\label{eqn:g_alpha_beta_cocycle}
		\glue{k}_{\gamma \alpha} \circ \glue{k}_{\beta \gamma} \circ \glue{k}_{\alpha \beta} = \text{id}
		\end{equation}
		when $\glue{k}_{\alpha \beta}$, $\glue{k}_{\beta \gamma}$ and $\glue{k}_{\gamma \alpha}$ are restricted to $\twc{k}^{*,*}_{\alpha;\alpha \beta \gamma}$, $\twc{k}^{*,*}_{\beta;\alpha \beta \gamma}$ and $\twc{k}^{*,*}_{\gamma;\alpha \beta \gamma}$ respectively. 
	\end{enumerate}
\end{definition}

For any $a : [m] \rightarrow [n]$ and the corresponding pull back $a^* : \mathcal{A}^*(\simplex_n) \rightarrow \mathcal{A}^*(\simplex_m)$, there is a naturally induced data $a^*(\glue{}(\simplex_n)) = (a^*(\glue{k}_{\alpha\beta}(\simplex_n)))_{k,\alpha\beta}$ as a compatible gluing morphism over $\simplex_{m}$.

\begin{definition}
	The {\em simplicial set of compatible gluing morphisms} $\gluesset(\simplex_{\bullet}) : \text{Mon} \rightarrow \text{Sets}$ is defined by letting $\gluesset(\simplex_n)$ be the set of compatible gluing morphisms over $\simplex_n$. 
\end{definition}

\subsubsection{The \v{C}ech-Thom-Whitney complex}\label{sec:the_Cech_thom_whitney_complex}
Given a compatible gluing morphism $\glue{}(\simplex_n)$ over $\simplex_n$, we construct a \v{C}ech-Thom-Whitney complex $\cech{k}^*(\twc{}, \glue{},\simplex_n)$ (or simply $\cech{k}^*(\twc{}, \glue{})$ when the dependence on $\simplex_n$ is clear) for each $k\in \mathbb{N}$.

\begin{definition}\label{def:cech_thom_whitney_complex}
	For each $\simplex_n$ and $\ell \in \mathbb{N}$, we let $\twc{k}^{*,*}_{\alpha_0 \cdots \alpha_{\ell}}(\glue{},\simplex_n) \subset \bigoplus_{i=0}^\ell \twc{k}^{*,*}_{\alpha_i;\alpha_0 \cdots \alpha_{\ell}}(\simplex_n)$ be the set of elements $(\varphi_0,\cdots, \varphi_{\ell})$ such that $ \varphi_{j} = \glue{k}_{\alpha_i \alpha_j}(\simplex_n) (\varphi_i)$. Then the {\em $k$-th order \v{C}ech-Thom-Whitney complex} $\cech{k}^*(\twc{}^{*,*}, \glue{},\simplex_n)$ over $X$ is defined by
	$$\cech{k}^\ell(\twc{}^{p,q},\glue{},\simplex_n) := \prod_{\alpha_0 \cdots \alpha_\ell} \twc{k}^{p,q}_{\alpha_0 \cdots \alpha_{\ell}}(\glue{},\simplex_n)$$
	and $\cech{k}^\ell(\twc{}^{*,*},\glue{},\simplex_n):= \bigoplus_{p,q} \cech{k}^\ell(\twc{}^{p,q},\glue{})$
	for each $k \in \mathbb{N}$. It is equipped with the {\em \v{C}ech differential} $\cechd{k}_\ell := \sum_{j =0}^{\ell+1} (-1)^j\restmap_{j,\ell+1}: \cech{k}^{\ell} (\twc{},\glue{},\simplex_n) \to \cech{k}^{\ell+1}(\twc{},\glue{},\simplex_n)$, where $\restmap_{j,\ell} : \cech{k}^{\ell-1}(\twc{},\glue{},\simplex_n) \rightarrow \cech{k}^{\ell}(\twc{},\glue{},\simplex_n)$ is the natural restriction map defined component-wise by $\restmap_{j,\ell}: \twc{k}^{*,*}_{\alpha_0 \cdots \widehat{\alpha_j} \cdots  \alpha_{\ell}}(\glue{},\simplex_n) \rightarrow \twc{k}^{*,*}_{\alpha_0 \cdots \alpha_{\ell}}(\glue{},\simplex_n)$ coming from Notation \ref{not:local_thom_whitney_complex}.
\end{definition}

For each $\simplex_n$, we set $\polyv{k}^{*,*}(\glue{},\simplex_n) := \ker(\cechd{k}_{0})$ (or simplified as $\polyv{k}^{*,*}(\glue{})$ when the dependence of $\simplex_n$ is clear). We denote the natural inclusion $\polyv{k}^{*,*}(\glue{}) \rightarrow \cech{k}^0(\twc{}^{*,*},\glue{})$ by $\cechd{k}_{-1}$, so we have the following sequence of maps
\begin{equation}\label{eqn:cech_thom_whitney_complex}
0 \rightarrow \polyv{k}^{p,q}(\glue{}) \rightarrow \cech{k}^0(\twc{}^{p,q},\glue{}) \rightarrow \cech{k}^1(\twc{}^{p,q},\glue{}) \rightarrow \cdots \rightarrow \cech{k}^{\ell}(\twc{}^{p,q},\glue{}) \rightarrow \cdots.
\end{equation}

For each $\simplex_n$, $\ell\in \mathbb{N}$ and $k\geq l$, there is a natural map $\rest{k,l}: \cech{k}^\ell(\twc{}^{p,q},\glue{})  \rightarrow \cech{l}^{\ell}(\twc{}^{p,q},\glue{})$ defined component-wise by the map $\rest{k,l}_{\alpha_j}: \twc{k}^{p,q}_{\alpha_j;\alpha_0 \cdots \alpha_{\ell}} \rightarrow \twc{l}^{p,q}_{\alpha_j;\alpha_0 \cdots \alpha_{\ell}}$ obtained from $\rest{k,l}_{\alpha}: \bva{k}_{\alpha}^* \rightarrow \bva{l}_{\alpha}^*$ in Definition \ref{def:abstract_deformation_data}. We then define $\cech{}^\ell(\twc{}^{p,q},\glue{}) :=\varprojlim_k \cech{k}^{\ell}(\twc{}^{p,q},\glue{})$ as the inverse limit along these maps and set $\cech{}^{\ell}(\twc{},\glue{}) := \bigoplus_{p,q} \cech{}^\ell(\twc{}^{p,q},\glue{})$. Similarly, we have the natural maps $\rest{k,l}:\polyv{k}^{p,q}(\glue{}) \rightarrow \polyv{l}^{p,q}(\glue{})$, and we can define $\polyv{}^{p,q}(\glue{})$ also by inverse limits and set $\polyv{}^{*,*}(\glue{}) := \bigoplus_{p,q} \polyv{}^{p,q}(\glue{})$.

For each $a : [m] \rightarrow [n]$ with the corresponding pullback $a^* : \mathcal{A}^*(\simplex_n) \rightarrow \mathcal{A}^*(\simplex_m)$, there are naturally induced maps $a^* : \cech{k}^\ell(\twc{}^{p,q},\glue{},\simplex_n) \rightarrow \cech{k}^\ell(\twc{}^{p,q},\glue{},\simplex_m)$ and $a^* :\polyv{k}^{p,q}(\glue{},\simplex_n) \rightarrow \polyv{k}^{p,q}(\glue{},\simplex_m)$ defined using the induced gluing morphisms $a^*(\glue{}(\simplex_n))$. 

\begin{definition}\label{def:compatible_differential}
	Fixing a compatible gluing morphism $\glue{}(\simplex_n) = (\glue{k}_{\alpha \beta}(\simplex_n))$ over $\simplex_n$, a {\em compatible differential} is an element $\dbtwist{}(\simplex_n) = \varprojlim_k \dbtwist{k}(\simplex_n)$ (or $\dbtwist{} = \varprojlim_k \dbtwist{k}$ by dropping its dependence on $\simplex_n$), where $\dbtwist{k}(\simplex_n)= (\dbtwist{k}_{\alpha}(\simplex_n))_{\alpha} \in \cech{k}^\ell(\twc{}^{1},\glue{},\simplex_n) $, such that, for each $k$, we have
	\begin{equation}\label{eqn:compatible_differential}
	 \glue{k}_{\beta\alpha}(\simplex_n) \circ (\prescript{k}{}{\pdb}_\beta +\prescript{k}{}{d}_{\beta,\simplex_n} + [\dbtwist{k}_{\beta}(\simplex_n), \cdot])  \circ \glue{k}_{\alpha\beta}(\simplex_n)  = \prescript{k}{}{\pdb}_{\alpha}+\prescript{k}{}{d}_{\alpha,\simplex_n} + [\dbtwist{k}_{\alpha}(\simplex_n),\cdot].
	\end{equation}
	For each $a : [m] \rightarrow [n]$, there is an induced compatible differential $a^*(\dbtwist{}(\simplex_n))$ for $a^*(\glue{}(\simplex_n))$.
\end{definition}

\begin{definition}
	We define the {\em simplicial set of compatible morphisms and differentials} $\gluediff(\simplex_{\bullet}) : \text{Mon} \rightarrow \text{Sets}$ by setting $\gluediff(\simplex_n) := \{ \left(\glue{}(\simplex_n), \dbtwist{}(\simplex_n)\right) \}$, where $\dbtwist{}(\simplex_n)$ is a compatible differential for a compatible gluing morphism $\glue{}(\simplex_n)$. 
\end{definition}

There is a natural morphism of simplicial sets $\gluediff(\simplex_{\bullet}) \rightarrow \gluesset(\simplex_{\bullet})$ defined by forgetting the differential. The results in \cite[\S 3.3-3.5]{chan2019geometry} can now be summarized as follows:
\begin{lemma}[\cite{chan2019geometry}]\label{lem:gluing_data_contractible}
	The simplicial sets $\gluesset(\simplex_{\bullet})$ and $\gluediff(\simplex_{\bullet})$ are non-empty (meaning that $\gluesset(\simplex_0) \neq \emptyset$ and $\gluediff(\simplex_0) \neq \emptyset$) and contractible (i.e., given an element $g = (g_0,\dots,g_n)$ in the boundary $\gluesset(\simplexbdy_n):= \{(g_0,\dots,g_n) \mid g_i \in \gluesset(\simplex_{n-1}), \ \mathtt{d}^*_{i,n-1}(g_j) = \mathtt{d}^*_{j,n-1}(g_i) \}$, there exists $\tilde{g} \in \gluesset(\simplex_n)$ such that $\mathtt{d}_{j,n}^*(\tilde{g}) = g_j$, and similarly for $\gluediff(\simplex_{\bullet})$).
\end{lemma}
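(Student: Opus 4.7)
The plan is to adapt the argument from \cite[\S 3.3--3.5]{chan2019geometry} to our current setting, proceeding by a double induction: first on the simplex dimension $n$, and second (and more substantively) on the thickening order $k$. Throughout, the key mechanism is that the patching data in Definition \ref{def:patching_datum} already give sheaf isomorphisms up to exponentials of inner derivations, and the Thom-Whitney resolution provides enough flexibility (via acyclicity on Stein covers and the extension Lemmas \ref{lem:simplicial_de_rham_extension}, \ref{lem:homotopy_simplicial_de_rham_extension}) to absorb all discrepancies.

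For non-emptiness of $\gluesset(\simplex_0)$, I would build $\glue{k}_{\alpha\beta}$ by induction on $k$. The base case $\glue{0}_{\alpha\beta} = \text{id}$ is forced by Definition \ref{def:compatible_gluing_morphism}(1). For the inductive step, I would set $\glue{k+1}_{\alpha\beta,i} = \exp([\iauto{k+1}_{\alpha\beta,i},\cdot]) \circ \patch{k+1}_{\alpha\beta,i}$ where $\iauto{k+1}_{\alpha\beta,i}$ is a lift of $\iauto{k}_{\alpha\beta,i}$; the elements $\sauto{k+1}_{\alpha\beta,i_0\cdots i_l}$ on overlaps are determined recursively by \eqref{eqn:alpha_beta_gluing_explicit_form_on_simplex} and must satisfy the face relations in \eqref{eqn:alpha_beta_gluing_explicit_form_on_simplex_boundary_relation}. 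The cocycle condition \eqref{eqn:g_alpha_beta_cocycle} is the main obstruction: at each order, the failure $\cocyobs{k+1}_{\alpha\beta\gamma,i}$ coming from Definition \ref{def:patching_datum}(4) gives a \v{C}ech $2$-cocycle in $\bva{k+1}/\bva{k}$ that is forced to vanish after correcting the $\iauto{k+1}_{\alpha\beta,i}$'s by a $1$-cochain whose coboundary cancels the obstruction. Since $\bva{k+1}^0_\alpha$ is acyclic on intersections $V_{\alpha_0\cdots\alpha_\ell}$ (Definition \ref{def:abstract_deformation_data}(4)) and $\mathbf{m}^{k+1}/\mathbf{m}^{k+2}$ is flat, the obstruction is unobstructed. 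Similarly, the order-compatibility \eqref{eqn:g_alpha_beta_order_compatibility} is resolved using Definition \ref{def:patching_datum}(2) and a parallel cohomological argument.

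For contractibility, given boundary data $g = (g_0, \ldots, g_n) \in \gluesset(\simplexbdy_n)$, I would again induct on $k$. At each step, the ingredients $\iauto{k}_{\alpha\beta,i}(\simplexbdy_n)$ and $\sauto{k}_{\alpha\beta,i_0\cdots i_l}(\simplexbdy_n)$ are prescribed on the boundary of $\simplex_n$ and must be extended to all of $\simplex_n$ as polynomial forms valued in the appropriate Stein sections of $\bva{k}$. This is exactly where Lemma \ref{lem:simplicial_de_rham_extension} applies: after tensoring with $\bva{k}_{\alpha}^0(U_I)$, the extension statement still holds componentwise, and one must then correct the extensions so that the exponential cocycle identity \eqref{eqn:g_alpha_beta_cocycle} is preserved, solving a cohomological problem over $\mathcal{A}^*(\simplex_n)\otimes \bva{k}^0_{\alpha}$.

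For $\gluediff(\simplex_\bullet)$, I would run the same inductive scheme but additionally construct the twist $\dbtwist{k}_\alpha(\simplex_n)$ satisfying \eqref{eqn:compatible_differential}. Given $\glue{k}$, equation \eqref{eqn:compatible_differential} asks that the various locally defined differentials $\prescript{k}{}{\pdb}_\alpha + \prescript{k}{}{d}_{\alpha,\simplex_n} + [\dbtwist{k}_\alpha,\cdot]$ glue under $\glue{k}_{\alpha\beta}$; since the conjugate of the unperturbed differential differs from the original by an inner derivation modulo $\mathbf{m}$, the obstruction to producing $\dbtwist{k}$ again lives in a \v{C}ech-Thom-Whitney cohomology of $\bva{k}$ that vanishes by Definition \ref{def:abstract_deformation_data}(4) and the acyclicity argument above. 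For the simplicial extension, the $\hsimplex$-version Lemma \ref{lem:homotopy_simplicial_de_rham_extension} is used to interpolate $\dbtwist{k}$ compatibly across face identifications.

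The principal obstacle throughout is propagating the cocycle condition across orders of $k$; this is the step where the weakened assumptions of \S \ref{sec:weaken_assumption} (needed because condition (3) of Definition \ref{def:abstract_deformation_data} fails for pairs) force a more delicate bookkeeping than in \cite{chan2019geometry}, but the underlying strategy of exploiting Stein acyclicity and the extension lemmas remains unchanged.
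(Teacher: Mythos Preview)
The paper does not supply its own proof of this lemma; it is stated purely as a summary of \cite[\S 3.3--3.5]{chan2019geometry}, and the only additional remark (in \S\ref{sec:weaken_assumption}) is that the argument survives the replacement of condition~(3) by condition~($3'$). Your proposal explicitly follows that same reference and correctly identifies the essential ingredients---induction on the order $k$, construction of the $\iauto{k}$'s and $\sauto{k}$'s subject to Condition~\ref{assum:induction_hypothesis}, use of Lemmas~\ref{lem:simplicial_de_rham_extension} and~\ref{lem:homotopy_simplicial_de_rham_extension} for simplicial extension, and Stein acyclicity from Definition~\ref{def:abstract_deformation_data}(4)---so there is nothing substantive to compare.

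One small caution: your description of the cocycle obstruction as ``a \v{C}ech $2$-cocycle in $\bva{k+1}/\bva{k}$'' resolved by acyclicity on $V_{\alpha_0\cdots\alpha_\ell}$ is slightly imprecise. The obstruction is indexed by both the $\mathcal{V}$-triple $\alpha\beta\gamma$ and the $\mathcal{U}$-index $i$ (or more generally $I$), and it is the contractibility of the nerve $\{\alpha : U_i \subset V_\alpha\}$ for each fixed $U_i$, together with the simplicial extension lemma across the $\mathcal{U}$-indices, that makes the correction possible---not a vanishing of sheaf cohomology on a single Stein intersection. This is the mechanism in \cite{chan2019geometry}, and your sketch is consistent with it once read this way.
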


With a compatible differential $\dbtwist{}$ over $\simplex_n$, the local operators $(\pdb_{\alpha} + d_{\alpha}+ [\dbtwist{}_{\alpha},\cdot])_{\alpha}$ glue to give a global differential operator $\mathbf{d}_{\dbtwist{}} $ on $\polyv{}^{*}(\glue{})$ while the Lie brackets glue together automatically, thus producing a dgLa. 

Consider the category $\mathtt{A}_{\cfr}$ of local Artinian $\cfr$-algebras with residue field $\comp$. With $(A,\mathbf{m}_A) \in \mathtt{A}_{\cfr}$ and $(\glue{}(\simplex_n),\dbtwist{}(\simplex_n)) \in \gluediff(\simplex_n)$, we let $\polyv{A}^*(\glue{},\simplex_n) := \polyv{k}^*(\glue{},\simplex_n) \otimes_{(\cfrk{k})} A$ (or simply $\polyv{A}^*(\glue{})$) for large enough $k$ such that $\mathbf{m}^{k+1} \cdot A \subset \{0\}$ and equip it with the differential $\prescript{A}{}{\mathbf{d}}_{\simplex_n}$ (or simply $\prescript{A}{}{\mathbf{d}}$). We use $\rest{A,B} : \polyv{A}^*(\glue{}) \rightarrow \polyv{B}^*(\glue{})$ to denote the naturally induced map from a morphism $A \rightarrow B$ in $\mathtt{A}_{\cfr}$. 

\begin{definition}
	For a fixed $\simplex_n$, an element $\varphi \in \polyv{A}^1(\glue{})$ such that $\varphi = 0 \ (\text{mod $\mathbf{m}_A$})$ is called a {\em Maurer-Cartan element} if it satisfies the Maurer-Cartan equation:
	\begin{equation}\label{eqn:maurer_cartan_equation}
		\mathbf{d}_{\dbtwist{}} \varphi + \half [\varphi,\varphi] = 0.
	\end{equation}
\end{definition}

Given $a : [m] \rightarrow [n]$ and any Maurer-Cartan element $\varphi$, $a^*(\varphi)$ is also a Maurer-Cartan element (with respect to $a^*(\glue{}(\simplex_n), \dbtwist{}(\simplex_n))$). Following \cite{getzler2009lie, hinich1996descent}, we define the {\em Maurer-Cartan simplicial set} $\mcsimplicial{A}(\simplex_{\bullet})$ over $A$ by setting
$$\mcsimplicial{A}(\simplex_n) := \left\{ \left(\glue{}(\simplex_n), \dbtwist{}(\simplex_n), \varphi\right) \mid (\glue{}(\simplex_n), \dbtwist{}(\simplex_n)) \in \gluediff(\simplex_n), \ \text{$\varphi$ satisfies \eqref{eqn:maurer_cartan_equation}} \right\}.$$ 
The following lemma is parallel to the results from \cite{getzler2009lie, hinich1996descent}.
\begin{lemma}\label{lem:kan_complex_lemma}
	The Maurer-Cartan simplicial set $\mcsimplicial{A}(\simplex_{\bullet})$ is a Kan complex. 
\end{lemma}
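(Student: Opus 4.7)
The plan is to fill arbitrary horns in $\mcsimplicial{A}(\simplex_\bullet)$: given a collection $(\xi_0,\ldots,\widehat{\xi_k},\ldots,\xi_n)$ with $\xi_j=(\glue{}_j,\dbtwist{}_j,\varphi_j)\in \mcsimplicial{A}(\simplex_{n-1})$ compatible on overlapping faces, I must produce $\tilde\xi=(\glue{},\dbtwist{},\varphi)\in \mcsimplicial{A}(\simplex_n)$ restricting to $\xi_j$ along $\mathtt{d}_{j,n}$ for $j\neq k$. I would proceed in two stages: first fill the horn for the underlying gluing and differential data $(\glue{}, \dbtwist{})$, then extend the Maurer--Cartan element $\varphi$ itself.

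For the first stage, inspection of the proof of Lemma \ref{lem:gluing_data_contractible} in \cite{chan2019geometry} shows that the contractibility of $\gluesset(\simplex_\bullet)$ and $\gluediff(\simplex_\bullet)$ is established by inductively constructing the elements $\iauto{k}_{\alpha\beta,i}$, $\sauto{k}_{\alpha\beta,I}$ and $\dbtwist{k}_\alpha$ using the boundary-extension lemma for polynomial forms (Lemma \ref{lem:simplicial_de_rham_extension}). The same induction carries over verbatim with $\simplexbdy_n$ replaced by $\Lambda^k_n$, once we invoke the horn-analogue of Lemma \ref{lem:simplicial_de_rham_extension} asserting that any polynomial form on $\Lambda^k_n$ extends to $\simplex_n$; this is standard since $\Lambda^k_n\hookrightarrow\simplex_n$ is acyclic and the Griffiths--Morgan argument adapts unchanged. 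This yields an extension $(\glue{},\dbtwist{})\in\gluediff(\simplex_n)$ of the given horn data.

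For the second stage, having fixed such $(\glue{},\dbtwist{})$, I must extend $\varphi|_{\Lambda^k_n}$ to an MC element of the dgLa $(\polyv{A}^*(\glue{}),\prescript{A}{}{\mathbf{d}})$. Using $\mathbf{m}_A^{N+1}=0$, proceed by induction on $N$: assuming $\bar\varphi\in\polyv{\bar{A}}^1(\glue{})$ (with $\bar{A}=A/\mathbf{m}_A^N$) solves MC and restricts correctly on the horn, lift $\bar\varphi$ by the horn-extension from stage one to some $\tilde\varphi_0\in\polyv{A}^1(\glue{})$ not yet satisfying MC. The obstruction $\mathbf{o}:=\prescript{A}{}{\mathbf{d}}\tilde\varphi_0+\tfrac{1}{2}[\tilde\varphi_0,\tilde\varphi_0]$ then lies in $\mathbf{m}_A^N\cdot\polyv{A}^2(\glue{})$, vanishes on $\Lambda^k_n$, and is $\prescript{A}{}{\mathbf{d}}$-closed modulo $\mathbf{m}_A^{N+1}$ by a standard Bianchi identity. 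A Dupont-type homotopy $h$ acting on the $\mathcal{A}^*(\simplex_n)$-factor built into the Thom--Whitney construction (cf.\ Notation \ref{not:local_thom_whitney_complex}) yields a primitive $h(\mathbf{o})\in\mathbf{m}_A^N\cdot\polyv{A}^1(\glue{})$ vanishing on the horn; setting $\varphi:=\tilde\varphi_0-h(\mathbf{o})$ solves MC modulo $\mathbf{m}_A^{N+1}$, because the correction lies in $\mathbf{m}_A^N$ and so all nonlinear cross-terms are killed at the next order.

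The main technical point will be ensuring that $h$ commutes with the \v{C}ech differential $\cechd{A}_0$, so that $h(\mathbf{o})$ lands in $\polyv{A}^*(\glue{})=\ker(\cechd{A}_0)$ rather than only in the ambient $\cech{A}^0(\twc{}^*,\glue{})$; this will hold so long as $h$ is built purely from the $\mathcal{A}^*(\simplex_n)$-factor, hence $\cfr$-linear and commuting with every gluing morphism $\glue{k}_{\alpha\beta}$. Alternatively, once stage one is in hand, one may invoke the classical Hinich--Getzler theorem \cite{hinich1996descent, getzler2009lie}: for fixed $(\glue{},\dbtwist{})$, $\polyv{A}^*(\glue{})$ is a nilpotent dgLa (since $A$ is Artinian) whose Maurer--Cartan simplicial set is Kan, and combining this with stage one shows that the forgetful map $\mcsimplicial{A}(\simplex_\bullet)\to\gluediff(\simplex_\bullet)$ is a Kan fibration, whence the total space is itself Kan.
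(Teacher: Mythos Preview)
Your two-stage strategy matches the paper's proof exactly: first fill the horn in $\gluediff(\simplex_\bullet)$, then lift the Maurer--Cartan element along small extensions by an obstruction argument. Stage one is fine; note that the paper simply invokes contractibility (boundary-filling) of $\gluediff(\simplex_\bullet)$, which already implies horn-filling, so the horn version of Lemma~\ref{lem:simplicial_de_rham_extension} is not strictly needed there.

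The execution of stage two, however, has a gap. Your Dupont contraction $h$ is a homotopy only for the de Rham differential $d_{\simplex_n}$ on the extra $\mathcal{A}^*(\simplex_n)$-factor, not for the total differential $\prescript{0}{}{\mathbf{d}}=\prescript{0}{}{\pdb}+d+d_{\simplex_n}$. A short computation shows $\prescript{0}{}{\mathbf{d}}\, h(\mathbf{o})=\mathbf{o}-P(\mathbf{o})-2\,h\,d_{\simplex_n}\mathbf{o}$, which is not $\mathbf{o}$ in general, so $h(\mathbf{o})$ is \emph{not} a primitive. Moreover, the standard Dupont operator retracts to a vertex, not to $\Lambda^k_n$, so there is no reason $h(\mathbf{o})$ should vanish on the horn. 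What is actually needed---and what the paper does---is the cohomological statement: the restriction $\mathcal{A}^*(\simplex_n)\to\mathcal{A}^*(\Lambda^k_n)$ is a surjective quasi-isomorphism, hence its kernel $K^*$ is acyclic, hence $K^*\otimes\twc{}^*(\bva{0})$ is acyclic for the total differential, and the closed element $\mathbf{o}$ (which lies in this kernel) is exact there. This simultaneously produces a primitive \emph{and} guarantees it restricts to zero on the horn.

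Your alternative via Hinich--Getzler also does not apply as stated. The fibre of $\mcsimplicial{A}(\simplex_\bullet)\to\gluediff(\simplex_\bullet)$ over a point of $\gluediff(\simplex_n)$ is just a set, not the $n$-simplices of the Maurer--Cartan simplicial set of a single nilpotent dgLa, because the gluing datum (and hence the dgLa $\polyv{A}^*(\glue{},\simplex_m)$) genuinely varies with $m$ and with the face maps. What salvages the argument is precisely that the \emph{zeroth-order} complex is canonical, $\polyv{0}^*(\glue{},\simplex_n)\cong\twc{}^*(\bva{0})\otimes\mathcal{A}^*(\simplex_n)$ independently of $\glue{}$, and the obstruction lives there---which brings you back to the paper's cohomological argument.
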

\begin{proof}
	Given $(\glue{}_i,\dbtwist{}_i,\prescript{A}{}{\varphi}_{i}) \in  \mcsimplicial{A}(\simplex_{n-1})$ for $0 \leq i \leq n$ and $i\neq k$ such that $\mathtt{d}_{i,n-1}^*(\glue{}_j,\dbtwist{}_j,\prescript{A}{}{\varphi}_{j}) = \mathtt{d}_{j-1,n-1}^*(\glue{}_i,\dbtwist{}_i,\prescript{A}{}{\varphi}_{i})$ for $ 0 \leq i < j \leq n$ and $i,j \neq k$, we need to construct $(\glue{},\dbtwist{},\prescript{A}{}{\varphi}) \in \mcsimplicial{A}(\simplex_{n})$ so that $\mathtt{d}_{i,n}^*(\glue{},\dbtwist{},\prescript{A}{}{\varphi} ) = (\glue{}_i,\dbtwist{}_i,\prescript{A}{}{\varphi}_{i})$. The existence of $(\glue{},\dbtwist{})$ follows from the contractibility of $\gluediff(\simplex_\bullet)$. For $\prescript{A}{}{\varphi}$, we assume that $\mathtt{h}: A \rightarrow B$ is a small extension and that $\prescript{B}{}{\varphi}$ has already been constructed such that $\mathtt{d}_{i,n}^*(\prescript{B}{}{\varphi}) = \rest{A,B}(\prescript{A}{}{\varphi}_{i})$. By the discussion in \cite[\S 3.4]{chan2019geometry}, one can always construct $\prescript{A}{}{\hat{\varphi}} \in \polyv{A}^1(\glue{},\simplex_n)$ such that $\mathtt{d}_{i,n}^*(\prescript{A}{}{\hat{\varphi}}) = \prescript{A}{}{\varphi}_{i}$. Therefore $\mathbf{d}_{\simplex_n} (\prescript{A}{}{\hat{\varphi}}) + \half [\prescript{A}{}{\hat{\varphi}},\prescript{A}{}{\hat{\varphi}}] = \prescript{A}{}{\mathtt{O}}$ with $[\prescript{A}{}{\mathtt{O}}] \in \mathbb{H}^2(\mathcal{A}^*(\simplex_n) \otimes \bva{0}^*) \otimes_{\comp} (\mathbf{m}_{A}/\ker(\mathtt{h}))$ (note that the local sheaves $\bva{0}^*_{\alpha}$'s glue to give a global sheaf $\bva{0}^*$ over $X$). Now the natural restriction map $\mathcal{A}^*(\simplex_n) \rightarrow \mathcal{A}^*(\Lambda^k_n)$ to the $k$-th horn $\Lambda^{k}_n$ is a quasi-isomorphism, thus inducing an isomorphism $\mathbb{H}^2(\mathcal{A}^*(\simplex_n) \otimes \bva{0}^*) \rightarrow \mathbb{H}^2(\mathcal{A}^*(\Lambda^k_n) \otimes \bva{0}^*)$. As a result, we have the obstruction class $[\prescript{A}{}{\mathtt{O}}] = 0$, giving the desired $\prescript{A}{}{\varphi}_{\simplex_n}$.
\end{proof}

Given an algebra homomorphism $c:A \rightarrow B$, there is a natural map $c(\simplex_{\bullet}) : \mcsimplicial{A}(\simplex_{\bullet}) \rightarrow \mcsimplicial{B}(\simplex_{\bullet})$ of simplicial sets. These can be packed together as the {\em simplicial Maurer-Cartan functor} $\mcsimplicial{\bullet}(\simplex_{\bullet}) : \mathtt{A}_{\cfr} \rightarrow \text{sSet}$ from $\mathtt{A}_{\cfr}$ to the category $\text{sSet}$ of simplicial sets. By taking the connected component $\pi_0 (\mcsimplicial{A}(\simplex_\bullet))$ of simplicial sets, we obtain the ordinary Maurer-Cartan functor $ \pi_0 (\mcsimplicial{\bullet}(\simplex_{\bullet})) $ defined by $A \mapsto \pi_0(\mcsimplicial{A}(\simplex_\bullet))$.

\subsubsection{Morphism of dgLa's}\label{sec:construction_morphism_of_dgla_from_gluing}

A morphism $\morph{} = \{\morph{k}_{\alpha}\}_{k,\alpha} :  \gla{}=\{\gla{k}^*_{\alpha}\}_{k,\alpha}  \rightarrow \bva{}=\{\bva{k}^*_{\alpha}\}_{k,\alpha}$ of patching data (Definition \ref{def:morphism_of_deformation_data}) naturally induces a morphism of dgLa's on the corresponding local Thom-Whitney complexes $\morph{k}_{\alpha_i}: \twcp{k}_{\alpha_i;\alpha_0\cdots\alpha_{\ell}}^{*,*}(\simplex_n)\rightarrow \twc{k}_{\alpha_i;\alpha_0\cdots\alpha_{\ell}}^{*,*}(\simplex_n)$
constructed from component-wise maps $\morph{k}_{\alpha_i} : \mathcal{A}^*(\simplex_{l})\otimes \mathcal{A}^*(\simplex_{n}) \otimes \gla{k}_{\alpha_i}^*(U_{i_0\cdots i_l}) \rightarrow  \mathcal{A}^*(\simplex_{l})\otimes \mathcal{A}^*(\simplex_{n}) \otimes \bva{k}_{\alpha_i}^*(U_{i_0\cdots i_l})$.

\begin{lemma}\label{lem:dgla_morphism_checking}
	Given a compatible gluing morphism $\gluep{} = (\gluep{k}_{\alpha\beta})_{k,\alpha\beta}$ for $\gla{}$ which satisfies Condition \ref{assum:induction_hypothesis} with the elements $\iautop{k}_{\alpha\beta,i}$'s and $\sautop{k}_{\alpha\beta,i_0\cdots i_l}$'s over $\simplex_n$, we define $\glue{} := \morph{}\left( \gluep{} \right)$ over $\simplex_n$ by setting $\iauto{k}_{\alpha\beta,i} := \morph{k}_{\beta}(\iautop{k}_{\alpha\beta,i})$ and $\sauto{k}_{\alpha\beta,i_0\cdots i_l} := \morph{k}_{\beta}(\sautop{k}_{\alpha\beta,i_0\cdots i_l})$. Then $\glue{}$ is a compatible gluing morphism for $\bva{}$ over $\simplex_n$.
\end{lemma}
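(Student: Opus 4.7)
The plan is to verify, in order, the three items making up Definition \ref{def:compatible_gluing_morphism}, preceded by a check that the explicit form Condition \ref{assum:induction_hypothesis} holds for $\glue{}$. Throughout, the only ingredients used are the compatibility conditions (1)--(5) of Definition \ref{def:morphism_of_deformation_data}, the fact that $\morph{k}_{\alpha}$ is a dgLa morphism (so it commutes with Lie brackets and hence with the Baker--Campbell--Hausdorff product $\bchprod$), and the conjugation identity $\Theta \circ \exp([x,\cdot]) = \exp([\Theta(x),\cdot]) \circ \Theta$ valid for any Lie-bracket-preserving map $\Theta$.

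First, I would check Condition \ref{assum:induction_hypothesis}. Parts (1) and (2) are tautological by the very definitions of $\iauto{k}_{\alpha\beta,i}$, $\sauto{k}_{\alpha\beta,i_0\cdots i_l}$, together with $\patch{k}_{\alpha\beta,i}$ being a patching datum for $\bva{}$; the vanishing modulo $\mathbf{m}$ follows from $\cfrk{k}$-linearity of $\morph{k}_{\beta}$. For part (3), one defines $\autoij{k}_{\alpha\beta,i_0 i_1} := \morph{k}_{\beta}(\autoijp{k}_{\alpha\beta,i_0 i_1})$, which is well-defined because $\morph{k}_{\beta}$ carries $\exp([\autoijp{k},\cdot])\, \gluep{k}_{\alpha\beta,i_0} = \gluep{k}_{\alpha\beta,i_1}$ to the analogous identity for $\glue{}$ by conjugation and condition (2) of Definition \ref{def:morphism_of_deformation_data}. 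The relation \eqref{eqn:alpha_beta_gluing_explicit_form_on_simplex_boundary_relation} for $\sauto{k}$ is then obtained by applying $\morph{k}_{\beta}$ to the corresponding relation for $\sautop{k}$, using that $\morph{k}_{\beta}$ preserves $\bchprod$.

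Next, I would verify Definition \ref{def:compatible_gluing_morphism}. The normalization $\glue{0}_{\alpha\beta} = \mathrm{id}$ is immediate: from $\gluep{0}_{\alpha\beta} = \mathrm{id}$ and $\patchp{0}_{\alpha\beta,i} = \mathrm{id}$ one may take $\iautop{0} = \sautop{0} = 0$, whence $\iauto{0} = \sauto{0} = 0$ and $\patch{0}_{\alpha\beta,i} = \mathrm{id}$. For the order-compatibility \eqref{eqn:g_alpha_beta_order_compatibility}, I would expand $\rest{k+1,k}_{\beta}\circ\glue{k+1}_{\alpha\beta,i}$ using (a) the conjugation identity to move $\rest{k+1,k}_{\beta}$ past $\exp([\iauto{k+1},\cdot])$, (b) the rearranged form of \eqref{eqn:bva_different_order_comparison}, namely $\rest{k+1,k}_{\beta}\circ\patch{k+1}_{\alpha\beta,i} = \patch{k}_{\alpha\beta,i}\circ \exp([\resta{k+1,k}_{\alpha\beta,i},\cdot])\circ \rest{k+1,k}_{\alpha}$, and (c) another conjugation to move $\patch{k}_{\alpha\beta,i}$ through the $\exp$. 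The result equals $\glue{k}_{\alpha\beta,i}\circ\rest{k+1,k}_{\alpha}$ precisely when
$$
\iauto{k}_{\alpha\beta,i} = \rest{k+1,k}_{\beta}(\iauto{k+1}_{\alpha\beta,i}) \bchprod \patch{k}_{\alpha\beta,i}(\resta{k+1,k}_{\alpha\beta,i}),
$$
and this last identity is the image under $\morph{k}_{\beta}$ of the analogous identity for $(\iautop{k},\iautop{k+1},\restap{k+1,k},\patchp{k})$, which holds because $\gluep{}$ is compatible; translation via $\morph{}$ is legitimate by conditions (1)--(3) of Definition \ref{def:morphism_of_deformation_data}, together with the fact that $\morph{k}_{\beta}$ preserves $\bchprod$.

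The cocycle condition \eqref{eqn:g_alpha_beta_cocycle} is handled by an entirely parallel argument: expand $\glue{k}_{\gamma\alpha}\circ \glue{k}_{\beta\gamma}\circ \glue{k}_{\alpha\beta}$ using Condition \ref{assum:induction_hypothesis} and \eqref{eqn:higher_order_cocycle_different} for $\patch{}$, reduce to an element-level identity in $\bva{k}_{\alpha}$ built from $\iauto{k}$, $\sauto{k}$, $\patchij{k}$ and $\cocyobs{k}$, and then obtain it as the image under $\morph{k}_{\alpha}$ of the analogous identity in $\gla{}$, using conditions (4)--(5) of Definition \ref{def:morphism_of_deformation_data}. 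The main obstacle throughout is bookkeeping: one must consistently track the order in which operators are conjugated, the direction of the restriction maps, the role of $\patch{k}_{\beta\alpha,i} = (\patch{k}_{\alpha\beta,i})^{-1}$, and the BCH combinations. No new geometric idea is needed beyond the structural compatibilities assembled in Definition \ref{def:morphism_of_deformation_data}, which by design are exactly enough to transport every relation used in defining a compatible gluing morphism from $\gla{}$ to $\bva{}$.
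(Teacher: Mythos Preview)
Your proposal is correct and follows essentially the same route as the paper: reduce each compatibility condition for $\glue{}$ to a Baker--Campbell--Hausdorff identity among the elements $\iauto{k}$, $\sauto{k}$, $\autoij{k}$, $\patchij{k}$, $\resta{k,l}$, $\cocyobs{k}$, and then observe that this identity is the image under $\morph{k}_{\beta}$ of the corresponding identity in $\gla{}$, using conditions (1)--(5) of Definition~\ref{def:morphism_of_deformation_data}. The one place the paper is slightly more explicit is in justifying $\autoij{k}_{\alpha\beta,i_0 i_1} = \morph{k}_{\beta}(\autoijp{k}_{\alpha\beta,i_0 i_1})$: rather than appealing to ``$\morph{k}_{\beta}$ carries the identity to the analogous identity for $\glue{}$'' (which, strictly, only yields the identity after precomposing with $\morph{k}_{\alpha}$), the paper derives the explicit formula $\autoijp{k}_{\alpha\beta,ij} = \iautop{k}_{\alpha\beta,j} \bchprod \patchijp{k}_{\beta\alpha,ji} \bchprod (-\iautop{k}_{\alpha\beta,i})$ on each side via the injectivity of $\mathrm{ad}$ (condition (3) of Definition~\ref{def:abstract_deformation_data}) and then matches them under $\morph{k}_{\beta}$ using condition (4) of Definition~\ref{def:morphism_of_deformation_data}.
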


\begin{proof}
	 From its construction, we see that $\glue{k}_{\alpha\beta} \circ \morph{k}_{\alpha} = \morph{k}_{\beta} \circ \gluep{k}_{\alpha\beta}$. With $\autoijp{k}_{\alpha\beta,ij}$'s and $\autoij{k}_{\alpha\beta,ij}$'s in Condition \ref{assum:induction_hypothesis}, we observe the relation
	$$
	 \exp \left( [\autoijp{k}_{\alpha\beta,ij},\cdot] \right)  = \exp\left([\iautop{k}_{\alpha\beta,j},\cdot] \right) \circ  \exp \left( [\patchijp{k}_{\beta\alpha,ji} , \cdot ] \right)  \circ \exp\left([-\iautop{k}_{\alpha\beta,i},\cdot] \right),
	$$
	which gives $\autoijp{k}_{\alpha\beta,ij} = \iautop{k}_{\alpha\beta,j} \bchprod \patchijp{k}_{\beta\alpha,ji} \bchprod (-\iautop{k}_{\alpha\beta,i}) $ by the injectivity of the map $\gla{k}_{\beta}^0 \hookrightarrow \der{(\cfrk{k})}{\gla{k}_{\beta}^*}{0}$, 
	and similarly for the elements $\autoij{k}_{\alpha\beta,ij}$'s, $\iauto{k}_{\alpha\beta,i}$'s and $\patchij{k}_{\beta\alpha,ji}$'s. So we have $\morph{k}_{\beta}(\autoijp{k}_{\alpha\beta,ij}) = \autoij{k}_{\alpha\beta,ij}$, which shows that $\sauto{k}_{\alpha\beta,i_0\cdots i_l}$ satisfies the relation \eqref{eqn:alpha_beta_gluing_explicit_form_on_simplex_boundary_relation}. 
	
	For the cocycle condition in Definition \ref{def:compatible_gluing_morphism}, we first consider the elements $\glue{k}_{\alpha\beta,i}$'s. A direct computation gives
	\begin{multline*}
	\glue{k}_{\gamma \alpha, i} \circ \glue{k}_{ \beta \gamma, i} \circ \glue{k}_{\alpha \beta, i} = \\ \exp([\iauto{k}_{\gamma\alpha,i},\cdot]) \circ \exp([\patch{k}_{\gamma\alpha,i} (\iauto{k}_{\beta\gamma,i}),\cdot]) \circ \exp([\patch{k}_{\gamma\alpha,i}\circ \patch{k}_{\beta\gamma,i} (\iauto{k}_{\alpha\beta,i}),\cdot]) \circ \patch{k}_{\gamma\alpha,i} \circ \patch{k}_{\beta\gamma,i} \circ \patch{k}_{\alpha\beta,i},
	\end{multline*}
	and the cocycle condition is equivalent (under the injection $\bva{k}_{\beta}^0 \hookrightarrow \der{(\cfrk{k})}{\bva{k}_{\beta}^*}{0}$) to 
	$$
	\iauto{k}_{\gamma\alpha,i} \bchprod \patch{k}_{\gamma\alpha,i} (\iauto{k}_{\beta\gamma,i}) \bchprod \patch{k}_{\gamma\alpha,i}\circ \patch{k}_{\beta\gamma,i} (\iauto{k}_{\alpha\beta,i}) \bchprod  \cocyobs{k}_{\alpha\beta\gamma,i}  = 0.
	$$
	Since $\morph{k}_{\alpha}$ is a morphism of dgLa's, $\glue{k}_{\alpha\beta,i}$'s satisfy the cocycle condition since $\gluep{k}_{\alpha\beta,i}$'s do. For general $\glue{k}_{\alpha\beta,i_0 \cdots i_l}$'s, notice that the cocycle condition is equivalent to 
	$$
	\sauto{k}_{\gamma\alpha,i_0 \cdots i_l} \bchprod  \glue{k}_{\gamma \alpha, i_0}(\sauto{k}_{\beta \gamma,i_0 \cdots i_l}) \bchprod \glue{k}_{\beta \alpha, i_0}(\sauto{k}_{\alpha \beta,i_0 \cdots i_l}) = 0,
	$$
	which also follows from the fact that $\morph{k}_{\alpha}$ preserves the graded Lie algebra structure. The condition $(2)$ in Definition \ref{def:compatible_gluing_morphism} can be proved similarly.
\end{proof}

Now for a map $a : [m] \rightarrow [n]$ in $\text{Mon}$, we have $\morph{} \left( a^*(\gluep{}) \right) = a^* \left( \morph{} (\gluep{}) \right)$ as compatible gluing morphisms over $\simplex_m$. Hence $\morph{}$ induces a morphism $\morph{} : \gluessetp \rightarrow \gluesset$ of simplicial sets, where $\gluessetp$ is the one associated to $\gla{}$. 

A morphism $\morph{}$ also induces a map $\morph{k} : \cech{k}^\ell(\twcp{},\gluep{}) \rightarrow \cech{k}^\ell(\twc{},\glue{})$ between the \v{C}ech-Thom-Whitney complexes which is naturally compatible with $\rest{k,l}$'s and $\cechd{k}$'s (here we use the notations $\rest{k,l}$ and $\cechd{k}$ for both $\gla{}$ and $\bva{}$). Suppose we have a compatible differential $\dbtwistp{}$ for $\gluep{}$ over $\simplex_n$, then $\dbtwist{} :=\morph{}(\dbtwistp{}) := \varprojlim_k \morph{k}(\dbtwistp{k})$ is a compatible differential for $\morph{}\left( \gluep{}\right)$ over $\simplex_n$. Therefore, it gives a morphism $\morph{} : \polyv{}^*(\gluep{}) \rightarrow \polyv{}^*(\glue{})$ of the corresponding dgLa's. In this way $\morph{}$ also induces maps $\morph{} :\gluediffp(\simplex_{\bullet}) \rightarrow \gluediff(\simplex_{\bullet})$ and $\morph{A} : \mcsimplicialp{A}(\simplex_{\bullet}) \rightarrow \mcsimplicial{A}(\simplex_{\bullet})$ of simplicial sets, for every $A \in \mathtt{A}_{\cfr}$. The latter morphisms can be packed together to give a natural transformation $\morph{}: \mcsimplicialp{\bullet}(\simplex_{\bullet})  \rightarrow  \mcsimplicial{\bullet}(\simplex_{\bullet})$.

\begin{definition}[Definition 1.37 in \cite{pridham2010unifying}]\label{def:smoothness_of_deformation_functor}
	We say the Maurer-Cartan functor $\mcsimplicial{\bullet}(\simplex_{\bullet})$ (and similarly for $\mcsimplicialp{\bullet}(\simplex_{\bullet})$) is {\em smooth} if the natural restriction map $\rest{A,B} : \mcsimplicial{A}(\simplex_{\bullet}) \rightarrow \mcsimplicial{B}(\simplex_{\bullet})$ is a surjective Kan fibration (see \cite[Fibrations 8.2.9]{weibel1995introduction} for a definition of Kan fibration).
	
	With $\morph{}:\gla{} \rightarrow \bva{}$ as above, we say the associated natural transformation $\morph{} : \mcsimplicialp{\bullet}(\simplex_{\bullet})  \rightarrow  \mcsimplicial{\bullet}(\simplex_{\bullet})$ is {\em smooth} if, 
	for every surjective map $A \rightarrow B$ in $\mathtt{A}_{\cfr}$, the naturally induced map 
	\begin{equation}\label{eqn:mc_functor_smoothness}
	\morph{A,B}: \mcsimplicialp{A}(\simplex_{\bullet}) \rightarrow 	\mcsimplicialp{B}(\simplex_{\bullet}) \times_{(\mcsimplicial{B}(\simplex_{\bullet}))} \mcsimplicial{A}(\simplex_{\bullet})
	\end{equation}
	is a surjective Kan fibration. 
\end{definition}

The above smoothness condition will imply the smoothness of the ordinary Maurer-Cartan functor $\pi_0(\mcsimplicial{\bullet}(\simplex_{\bullet}))$, and similarly for the natural transformation $\pi_0(\morph{}) : \pi_0(\mcsimplicialp{\bullet}(\simplex_{\bullet}))  \rightarrow \pi_0( \mcsimplicial{\bullet}(\simplex_{\bullet}))$.

The local sheaves $\gla{0}_{\alpha}^*$'s and $\bva{}_{\alpha}^*$'s glue to give global sheaves  $\gla{0}$ and $\bva{0}$ of dgLa's over $X$, and the morphisms $\morph{0}_{\alpha}$'s glue to give a morphism $\morph{0} : \gla{0}_{\alpha}^* \rightarrow \bva{0}_{\alpha}^*$ of sheaves of dgLa's. Let $\kla := \ker(\morph{0})$ be the kernel subsheaf. Then both $\polyv{0}^*(\gluep{},\simplex_n)$ and $\polyv{0}^*(\glue{},\simplex_n)$ are independent of the choice of the gluing morphism and compatible differential, and they are quasi-isomorphic to $\twc{}^*(\gla{0}) \otimes_\comp \mathcal{A}^*(\simplex_n)$ and $\twc{}^*(\bva{0}) \otimes_\comp \mathcal{A}^*(\simplex_n)$ respectively. If we further assume that each $\morph{k}_{\alpha} : \gla{k}_{\alpha}^* \rightarrow \bva{k}_{\alpha}^*$ is a surjective morphism of sheaves, then we obtain the following exact sequence of dgLa's:
$$
\xymatrix@1{
 0  \ar[r] & \twc{}^*(\kla)\otimes_\comp \mathcal{A}^*(\simplex_n) \ar[r] \ar[d]^{\cong} &  \twc{}^*(\gla{0}) \otimes_\comp \mathcal{A}^*(\simplex_n) \ar[r]  \ar[d]^{\cong} &  \twc{}^*(\bva{0}) \otimes_\comp \mathcal{A}^*(\simplex_n) \ar[r] \ar[d]^{\cong} & 0 \\
  0  \ar[r] & \prescript{0}{}{\mathscr{K}}^*(\simplex_n) \ar[r] & \polyv{0}^*(\gluep{},\simplex_n) \ar[r] & \polyv{0}^*(\glue{},\simplex_n)  \ar[r] & 0,
}
$$
where $\prescript{0}{}{\mathscr{K}}^*(\simplex_n)$ is the kernel dgLa of the natural map $\polyv{0}^*(\gluep{},\simplex_n) \rightarrow \polyv{0}^*(\glue{},\simplex_n)$ and the vertical arrows are all quasi-isomorphisms of dgLa's.

\begin{theorem}\label{thm:main_theorem}
	Suppose $\morph{} : \gla{} \rightarrow \bva{}$ is a morphism  between abstract deformation data such that each $\morph{k}_{\alpha} : \gla{k}_{\alpha}^* \rightarrow \bva{k}_{\alpha}^*$ is a surjective morphism of sheaves and the hypercohomology group $\mathbb{H}^2(X,\kla^*) = 0$. Then $\morph{} : \mcsimplicialp{\bullet}(\simplex_{\bullet})  \rightarrow  \mcsimplicial{\bullet}(\simplex_{\bullet})$ is smooth. In particular, $\mcsimplicialp{\bullet}(\simplex_{\bullet})$ is smooth if $\mcsimplicial{\bullet}(\simplex_{\bullet})$ is smooth
\end{theorem}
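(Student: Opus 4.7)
The plan is to verify that $\morph{A,B}$ from \eqref{eqn:mc_functor_smoothness} is a surjective Kan fibration for every surjection $A\to B$ in $\mathtt{A}_\cfr$. Factoring any such surjection into a chain of small extensions, it suffices to treat one small extension $0\to I\to A\to B\to 0$ with $\mathbf{m}_A\cdot I=0$. Under this assumption, cross terms in the bracket and the non-linear part of the Maurer-Cartan equation collapse when one of the factors lies in $\otimes I$, which is the technical backbone of the whole argument.

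\textbf{Surjectivity on $0$-simplices.} Let $(\prescript{B}{}{\varphi},\prescript{A}{}{\psi})$ be an element of the fiber product, i.e.\ $\prescript{B}{}{\varphi}\in\mcsimplicialp{B}(\simplex_n)$, $\prescript{A}{}{\psi}\in\mcsimplicial{A}(\simplex_n)$ with $\morph{B}(\prescript{B}{}{\varphi})=\rest{A,B}(\prescript{A}{}{\psi})$ (the auxiliary data $(\gluep{},\dbtwistp{})$ and $(\glue{},\dbtwist{})$ are understood). First I lift $\prescript{B}{}{\varphi}$ to an arbitrary element $\prescript{A}{}{\tilde\varphi}\in\polyv{A}^1(\gluep{},\simplex_n)$, possible by condition (2) of Definition \ref{def:abstract_deformation_data}. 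Since $\morph{A}(\prescript{A}{}{\tilde\varphi})$ agrees with $\prescript{A}{}{\psi}$ modulo $I$, their difference sits in $\polyv{0}^1(\glue{},\simplex_n)\otimes_\comp I$; the surjectivity of $\morph{0}:\gla{0}^*\to\bva{0}^*$ propagates through the Thom-Whitney construction to a surjection $\polyv{0}^*(\gluep{})\twoheadrightarrow\polyv{0}^*(\glue{})$, so I may add a correction in $\polyv{0}^1(\gluep{})\otimes_\comp I$ to achieve $\morph{A}(\prescript{A}{}{\tilde\varphi})=\prescript{A}{}{\psi}$ exactly while keeping $\rest{A,B}(\prescript{A}{}{\tilde\varphi})=\prescript{B}{}{\varphi}$.

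\textbf{Vanishing of the Maurer-Cartan obstruction.} Form
\[\prescript{A}{}{\mathtt{O}}:=\mathbf{d}_{\dbtwistp{}}\prescript{A}{}{\tilde\varphi}+\half[\prescript{A}{}{\tilde\varphi},\prescript{A}{}{\tilde\varphi}].\]
Since $\prescript{B}{}{\varphi}$ is Maurer-Cartan, $\prescript{A}{}{\mathtt{O}}\equiv 0\pmod{I}$; since $\prescript{A}{}{\psi}=\morph{A}(\prescript{A}{}{\tilde\varphi})$ is Maurer-Cartan, $\morph{A}(\prescript{A}{}{\mathtt{O}})=0$. Using $\mathbf{m}_A\cdot I=0$ to discard cross terms, $\prescript{A}{}{\mathtt{O}}$ is a $\mathbf{d}_{\dbtwistp{}}$-closed element of $\prescript{0}{}{\mathscr{K}}^2(\simplex_n)\otimes_\comp I$ by a Bianchi-type identity. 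By the quasi-isomorphism $\prescript{0}{}{\mathscr{K}}^*(\simplex_n)\simeq\twc{}^*(\kla)\otimes_\comp\mathcal{A}^*(\simplex_n)$ stated just before the theorem, combined with the acyclicity of $\mathcal{A}^*(\simplex_n)$ in positive degrees, its cohomology class lies in $\mathbb{H}^2(X,\kla^*)\otimes_\comp I$, which vanishes by hypothesis. Hence $\prescript{A}{}{\mathtt{O}}=\mathbf{d}_{\dbtwistp{}}\prescript{A}{}{\eta}$ for some $\prescript{A}{}{\eta}\in\prescript{0}{}{\mathscr{K}}^1(\simplex_n)\otimes_\comp I$, and $\prescript{A}{}{\varphi}:=\prescript{A}{}{\tilde\varphi}-\prescript{A}{}{\eta}$ is the required lift, because $\prescript{A}{}{\eta}$ lies in the kernel of $\morph{A}$ and is killed by $\rest{A,B}$.

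\textbf{Horn filling and the ``in particular'' clause.} The Kan-fibration condition is handled by the identical argument with $\simplex_n$ replaced by the relative pair $(\simplex_n,\Lambda^k_n)$: since $\mathcal{A}^*(\simplex_n)\to\mathcal{A}^*(\Lambda^k_n)$ is a quasi-isomorphism (used already in the proof of Lemma \ref{lem:kan_complex_lemma}), the relative obstruction also lives in $\mathbb{H}^2(X,\kla^*)\otimes_\comp I=0$. The main technical obstacle, and the reason for the pre-adjustment above, is carrying the compatibilities with $\prescript{A}{}{\psi}$, $\prescript{B}{}{\varphi}$ and the prescribed horn faces through the construction simultaneously; the hypothesis $\mathbb{H}^2(X,\kla^*)=0$ is exactly what kills the remaining cohomological obstruction. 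For the ``in particular'' statement, factor the restriction $\rest{A,B}$ on $\mcsimplicialp{\bullet}$ as $\mcsimplicialp{A}\xrightarrow{\morph{A,B}}\mcsimplicialp{B}\times_{\mcsimplicial{B}}\mcsimplicial{A}\to\mcsimplicialp{B}$: the first map is a surjective Kan fibration by what has just been proved, and the second is a base change of $\rest{A,B}:\mcsimplicial{A}\to\mcsimplicial{B}$, which is a surjective Kan fibration by the assumed smoothness of $\mcsimplicial{\bullet}(\simplex_\bullet)$; the composition is therefore a surjective Kan fibration, yielding smoothness of $\mcsimplicialp{\bullet}(\simplex_\bullet)$.
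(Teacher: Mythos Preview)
Your proof is correct and follows essentially the same route as the paper's: reduce to a small extension, lift with the correct $\morph{}$-image using surjectivity at level~$0$, identify the Maurer--Cartan defect as a closed element of $\prescript{0}{}{\mathscr{K}}^*\otimes I$, and kill its class. The paper organizes the two checks in the opposite order (Kan-fibration first, then surjectivity on $\simplex_0$) and is more explicit about the horn-face adjustment step, but the underlying mechanism is identical.

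One small sharpening: for the horn-filling step the hypothesis $\mathbb{H}^2(X,\kla^*)=0$ is not actually needed. Once you have arranged $\mathtt{d}_{i,n}^*(\prescript{A}{}{\tilde\varphi})=\prescript{A}{}{\tilde\varphi}_i$ on the horn faces, the obstruction $\prescript{A}{}{\mathtt{O}}$ lies in the kernel of the restriction $\twc{}^*(\kla)\otimes\mathcal{A}^*(\simplex_n)\to\twc{}^*(\kla)\otimes\mathcal{A}^*(\Lambda^k_n)$, and since this restriction is a quasi-isomorphism that kernel is acyclic; so $\prescript{A}{}{\mathtt{O}}$ is exact by a primitive that itself vanishes on the horn, regardless of $\mathbb{H}^2$. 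The vanishing of $\mathbb{H}^2(X,\kla^*)$ is genuinely used only for the surjectivity on $\simplex_0$, where there is no horn to restrict to. Your phrasing ``the relative obstruction also lives in $\mathbb{H}^2(X,\kla^*)\otimes I=0$'' blurs this distinction, though of course the conclusion still holds.
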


\begin{proof}
	It suffices to consider a small extension $\mathtt{h}: A \rightarrow B$. A general element in the target of the map \eqref{eqn:mc_functor_smoothness} can be represented by Maurer-Cartan solutions $(\prescript{B}{}{\tilde{\varphi}}_{\simplex_n}, \prescript{A}{}{\varphi}_{\simplex_n})$ with respect to the gluing morphisms $(\gluep{}(\simplex_n),\glue{}(\simplex_n))$ and compatible differentials $(\dbtwistp{}(\simplex_n),\dbtwist{}(\simplex_n))$, where $(\glue{}(\simplex_n),\dbtwist{}(\simplex_n))= \morph{}(\gluep{}(\simplex_n),\dbtwistp{}(\simplex_n))$ and $\morph{}(\prescript{B}{}{\varphi}_{\simplex_n}) = \rest{A,B} (\prescript{A}{}{\varphi}_{\simplex_n})$. Given Maurer-Cartan solutions $\prescript{A}{}{\tilde{\varphi}}_i \in \polyv{A}^1(\gluep{},\simplex_{n-1})$ for $0 \leq i \leq n$ and $i\neq k$ with respect to the gluing morphism $\mathtt{d}_{i,n}^*(\gluep{}(\simplex_n))$ and compatible differential $\mathtt{d}_{i,n}^*(\dbtwistp{}(\simplex_n))$ such that $ \mathtt{d}_{i,n}^*(\prescript{B}{}{\tilde{\varphi}}_{\simplex_n}) = \rest{A,B}(\prescript{A}{}{\tilde{\varphi}}_i)$ and $\mathtt{d}_{i,n-1}^*(\prescript{A}{}{\tilde{\varphi}}_{j}) = \mathtt{d}_{j-1,n-1}^*(\prescript{A}{}{\tilde{\varphi}}_{i})$, we need to construct an element $\prescript{A}{}{\hat{\varphi}}_{\simplex_n} \in \polyv{A}^1(\gluep{},\simplex_n)$ lifting $\prescript{B}{}{\tilde{\varphi}}_{\simplex_n}$, and satisfying $\morph{}(\prescript{A}{}{\hat{\varphi}}_{\simplex_n}) = \prescript{A}{}{\varphi}_{\simplex_n}$ and $\mathtt{d}_{i,n}^*(\prescript{A}{}{\hat{\varphi}}_{\simplex_n}) = \prescript{A}{}{\tilde{\varphi}}_i$. 
	
 	By the discussion in \cite[\S 3.4]{chan2019geometry}, we can construct a lifting $\prescript{A}{}{\mathtt{u}}_{\simplex_n} \in \polyv{A}^1(\gluep{},\simplex_n)$ of $\prescript{B}{}{\tilde{\varphi}}_{\simplex_n}$ satisfying $\morph{}(\prescript{A}{}{\mathtt{u}}_{\simplex_n}) = \prescript{A}{}{\varphi}_{\simplex_n}$. Letting $\prescript{A}{}{\mathtt{u}}_i = \mathtt{d}_{i,n}^*(\prescript{A}{}{\mathtt{u}}_{\simplex_n})$ for $i \neq k$, we have $\rest{A,B}(\prescript{A}{}{\tilde{\varphi}}_i - \prescript{A}{}{\mathtt{u}}_i) = 0 $ and $\morph{A}(\prescript{A}{}{\tilde{\varphi}}_i - \prescript{A}{}{\mathtt{u}}_i) = 0$ which implies that $\prescript{A}{}{\mathtt{w}}_i :=   \prescript{A}{}{\tilde{\varphi}}_i - \prescript{A}{}{\mathtt{u}}_i$ lies in $\twc{}^{*,*}(\kla) \otimes_{\comp} \mathcal{A}^*(\simplex_{n-1}) \otimes_{\comp} (\mathbf{m}_A/\ker(\mathtt{h}))$. We can construct $\prescript{A}{}{\mathtt{v}} \in \twc{}^{*,*}(\kla) \otimes_{\comp} \mathcal{A}^*(\simplex_{n}) \otimes_{\comp} (\mathbf{m}_A/\ker(\mathtt{h}))$ such that $\mathtt{d}_{i,n}^*(\prescript{A}{}{\mathtt{v}}) = \prescript{A}{}{\mathtt{w}}_i$ and therefore the modification $\prescript{A}{}{\hat{\varphi}}_{\simplex_n}  = \prescript{A}{}{\mathtt{u}}  +\prescript{A}{}{\mathtt{v}}$ will have the desired properties.
	
	We have $\prescript{A}{}{\mathbf{d}}_{\simplex_n} (\prescript{A}{}{\hat{\varphi}}_{\simplex_n}) + \half [\prescript{A}{}{\hat{\varphi}}_{\simplex_n},\prescript{A}{}{\hat{\varphi}}_{\simplex_n}] = \prescript{A}{}{\mathtt{O}}_{\simplex_n} \in \twc{}^{*,*}(\kla) \otimes_{\comp} \mathcal{A}^*(\simplex_{n}) \otimes_{\comp} (\mathbf{m}_A/\ker(\mathtt{h}))$  with $\prescript{0}{}{\mathbf{d}}_{\simplex_n} (\prescript{A}{}{\mathtt{O}}_{\simplex_n}) = 0$, and hence representing a cohomology class satisfying $ [(\prescript{A}{}{\mathtt{O}}_{\simplex_n})|_{\Lambda_n^{k}}] = 0$. Using the fact that the natural restriction map $\mathcal{A}^*(\simplex_n) \rightarrow \mathcal{A}^*(\Lambda^k_n)$ to the $k$-th horn is a quasi-isomorphism, we have $\prescript{A}{}{\mathtt{O}}_{\simplex_n} = \prescript{0}{}{\mathbf{d}}_{\simplex_n}(\prescript{A}{}{\mathtt{c}}_{\simplex_n})$. This allows us to define a Maurer-Cartan element $ \prescript{A}{}{\tilde{\varphi}}_{\simplex_n} = \prescript{A}{}{\hat{\varphi}}_{\simplex_n} - \prescript{A}{}{\mathtt{c}}_{\simplex_n} \in \mcsimplicialp{A}(\simplex_n)$ as desired. This completes the proof that the map in \eqref{eqn:mc_functor_smoothness} is a Kan fibration.
	
	Now it remains to show that the map $\morph{A,B} : \mcsimplicialp{A}(\simplex_{0}) \rightarrow 	\mcsimplicialp{B}(\simplex_{0}) \times_{(\mcsimplicial{B}(\simplex_{0}))} \mcsimplicial{A}(\simplex_{0})$ on the $0$-simplex $\simplex_0$ is surjective for a small extension $\mathtt{h}:A \rightarrow B$. Again an element in the target can be written as $(\prescript{B}{}{\tilde{\varphi}}_{\simplex_0}, \prescript{A}{}{\varphi}_{\simplex_0})$ with respect to the gluing morphisms $(\gluep{}(\simplex_0),\glue{}(\simplex_0))$ and compatible differentials $(\dbtwistp{}(\simplex_0),\dbtwist{}(\simplex_0))$, and what we need is a lifting $\prescript{A}{}{\tilde{\varphi}}_{\simplex_0}$ as an element of the domain. As above, we can construct a lifting $\prescript{A}{}{\hat{\varphi}}_{\simplex_n} \in \polyv{A}^1(\gluep{},\simplex_0)$ of $(\prescript{B}{}{\tilde{\varphi}}_{\simplex_0}, \prescript{A}{}{\varphi}_{\simplex_0})$ and get the obstruction $(\prescript{A}{}{\mathtt{O}}_{\simplex_n}) \in \twc{}^*(\kla)$ which represents a  cohomology class in $H^2(\twc{}^*(\kla))$. From the assumption, we have $H^2(\twc{}^*(\kla))  = \mathbb{H}^2(X,\kla^*) =0$, forcing the obstruction to be zero. This proves the smoothness of $\morph{A,B}$, and the second statement follows from the first one.
\end{proof}

\subsection{Weakened assumption in Definition \ref{def:abstract_deformation_data}}\label{sec:weaken_assumption}
In this subsection, we explain the modification needed when condition $(3)$ in Definition \ref{def:abstract_deformation_data} is not satisfied -- this is essential for application of the results of this section to smoothing of pairs in \S \ref{sec:pair_data_from_gross_siebert} because the dgLa in Definition \ref{def:Lie_algebra_of_pairs} indeed does not satisfy this condition. 

The condition $(3)$ in Definition \ref{def:abstract_deformation_data} can be weakened as follows: Consider the adjoint homomorphisms $\text{ad} : \bva{0}^* \rightarrow \der{}{\bva{0}^*}{*}$ and $\text{ad} : \bva{k}_{\alpha}^* \rightarrow \der{(\cfrk{k})}{\bva{k}_{\alpha}^*}{*}$, where $\der{(\cfrk{k})}{}{*}$ denotes the graded vector space of $\cfrk{k}$-linear derivations. Let $\text{ad}(\bva{0}^*)$ and $\text{ad}(\bva{k}_{\alpha}^*)$ be their image sheaves of dgLa's respectively, which are equipped with the naturally induced morphisms $\rest{k,l}_{\alpha} : \text{ad}(\bva{k}^*_{\alpha}) \rightarrow \text{ad}(\bva{l}^*_{\alpha})$. Then condition $(3)$ in Definition \ref{def:abstract_deformation_data} can be weakened to the following condition:\\

\noindent \textit{($3'$) $\text{ad}(\bva{0}^*), \{\text{ad}(\bva{k}^*_{\alpha})\}_{k,\alpha}, \{\rest{k,l}_{\alpha}\}_{k\geq l,\alpha}$ satisfy conditions $(1)$, $(2)$ and $(4)$ in Definition \ref{def:abstract_deformation_data}.}\\

With this weakened condition, Condition \ref{assum:induction_hypothesis} and Definition \ref{def:compatible_differential} need to be adjusted:
First, Condition \ref{assum:induction_hypothesis} should be modified by only requiring the elements $\iauto{k}_{\alpha\beta,i}(\simplex_n)$'s, $\sauto{k}_{\alpha\beta, i_0 \cdots i_l}(\simplex_n)$'s and $\autoij{k}_{\alpha\beta,i_0i_1}(\simplex_n)$'s to take values in $\text{ad}(\bva{k}_{\alpha}^*)$, and the equation \eqref{eqn:alpha_beta_gluing_explicit_form_on_simplex_boundary_relation} to hold in $\text{ad}(\bva{k}_{\alpha}^*)$. The gluing morphism $\glue{k}_{\alpha\beta}(\simplex_n)$ will remain well defined as it involves only Lie brackets with those elements.
Similarly, in Definition \ref{def:compatible_differential}, we only require the elements $\dbtwist{k}_{\alpha}(\simplex_n)$'s to take values in $\text{ad}(\bva{k}_{\alpha}^*)$, which is enough for making sense of the equation \eqref{eqn:compatible_differential}.

Only Lemmas \ref{lem:gluing_data_contractible} and \ref{lem:dgla_morphism_checking} involve the original condition $(3)$ in Definition \ref{def:abstract_deformation_data}. But we observe that, under the new condition $(3')$, their proofs are not altered at all:
Lemma \ref{lem:gluing_data_contractible}, which we extract from \cite{chan2019geometry}, involves solving for the elements $\iauto{k}_{\alpha\beta,i}(\simplex_n)$'s and $\sauto{k}_{\alpha\beta, i_0 \cdots i_l}(\simplex_n)$'s in Condition \ref{assum:induction_hypothesis}, and the elements $\dbtwist{k}_{\alpha}(\simplex_n)$'s in Definition \ref{def:compatible_differential}. Under condition $(3')$, these elements can only be defined with values in the sheaves $\text{ad}(\bva{k}^*_{\alpha})$'s, but this will be enough because we observe that the compatibility conditions in Definitions \ref{def:compatible_gluing_morphism} and \ref{def:compatible_differential} only involve the values of these elements in the sheaves $\text{ad}(\bva{k}^*_{\alpha})$'s.
Similarly, in the proof of Lemma \ref{lem:dgla_morphism_checking}, these elements only take values in $\text{ad}(\gla{k}^*_{\alpha})$'s and $\text{ad}(\bva{k}^*_{\alpha})$'s and the equality involving them will hold in $\text{ad}(\gla{k}^*_{\alpha})$'s and $\text{ad}(\bva{k}^*_{\alpha})$'s, but these are already enough for our purposes.

\section{Smoothing of pairs}\label{sec:deformation_of_pair_on_log}

In this section, we construct the abstract local deformation data $\gla{}$ for a pair $(X,\mathfrak{C}^*)$, which would come with a natural forgetful morphism $\morph{} : \gla{} \rightarrow \bva{}$ sending $\gla{}$ to the abstract local deformation data $\bva{}$ associated to $X$ constructed in \cite{Felten-Filip-Ruddat}.

Throughout this section, we fix $Q = \mathbb{N}$.
Let $X$ be a $d$-dimensional projective toroidal crossing space over $\comp$.
According to \cite[Definition 1.5]{Felten-Filip-Ruddat}, a toroidal crossing space is defined as an algebraic space over $\comp$ together with a sheaf of monoids $\mathcal{P}$ with global section $\mathbf{1} \in \Gamma(X, \mathcal{P})$ such that, locally at every point $x\in X$, there is a smooth map to the boundary divisor $D_x$ in the affine toric variety $V_x = \text{Spec }\comp[\mathcal{P}_x]$ mapping $\mathbf{1}_x$ to the monomial in $\mathcal{P}_x$ whose divisor is $D_x$ and so that $\mathcal{P}$ is isomorphic to the pullback of $\mathcal{P}_{V_x}$; here $\mathcal{P}_{V_x}$ is the sheaf of monoids defined by $\mathcal{P}_{V_x} := \underline{\mathcal{P}_x} / a^{-1}(\mathcal{O}_{V_x}^\times)$ where $\underline{\mathcal{P}_x}$ denotes the constant sheaf and $a: \underline{\mathcal{P}_x} \to \mathcal{O}_{V_x}$ is the map $p \mapsto z^p$.
This notion was introduced by Schr\"oer-Siebert \cite{Siebert-Schroer}.
We assume that the higher tangent sheaf $\mathcal{T}_X^1 = \mathcal{E}\text{xt}^1(\Omega_X,\mathcal{O}_X)$ is globally generated. 
Then \cite[Theorem 6.8 and Proposition 6.9]{Felten-Filip-Ruddat} furnish $X$ with a log structure, or what they call the structure of a log toroidal family over the standard $\mathbb{N}$-log point $\logsk{0}$. We will denote the log scheme by $X^{\dagger}$ if we want to emphasize its log structure.

Let $Z \subset X$ be the codimension $2$ singular locus of the log structure (i.e., $X^{\dagger}$ is log smooth away from $Z$), and write $j : X\setminus Z \rightarrow X$ for the inclusion. Also let $W^*_{X^{\dagger}/\logsk{0}} = j_* \Omega^*_{(X\setminus Z)^{\dagger}/\logsk{0}}$ be the push forward of the sheaf of relative log differential forms on $X\setminus Z$ over $\logsk{0}$. We further assume the Calabi-Yau condition, namely, $\omega_X \cong \mathcal{O}_X$ (which is equivalent to the condition that $W^d_{X^{\dagger}/\logsk{0}} \cong \mathcal{O}_X$ by \cite[Lemma 6.11.]{Felten-Filip-Ruddat}).

\subsection{Construction of abstract deformation data for a space}\label{sec:data_from_gross_siebert}
Here we recall the construction of the abstract deformation data $\sla{}$ from \cite[\S 8]{chan2019geometry} concerning the smoothing of $X$.\footnote{Note that it was denoted as $\bva{}$ (not $\sla{}$) in \cite{chan2019geometry}.}
\begin{notation}\label{not:local_model}
	Following \cite[\S 3]{Felten-Filip-Ruddat}, we take an elementary (log) toroidal crossing datum $(\mathcal{Q}\subset P,\mathcal{F})$ consisting of monoids $\mathcal{Q}, P$ with an injection $\mathcal{Q} \hookrightarrow P$ and a collection $\mathcal{F}$ of facets of $P$ containing all facets that do not contain $Q$. We have the corresponding analytic schemes $\mathbf{V} = \text{Spec}(\comp[P])^{an}$, which is equipped with the divisorial log structure induced from the toric divisor $V$ corresponding to $\mathcal{F}$, and $V= \text{Spec}(\comp[\mathcal{Q}])^{an}$, which is equipped with the pullback log structure from $\mathbf{V}$. 
	There is a log morphism $\pi : \mathbf{V}^{\dagger} \rightarrow \logs$, induced from the natural monoid morphism $Q = \mathbb{N} \rightarrow P$, and a fiber diagram of log analytic schemes
	\begin{equation}\label{eqn:local_model}
	\xymatrix@1{ V^{\dagger} \  \ar@{^{(}->}[rr] \ar[d] & & \mathbf{V}^{\dagger} \ar[d]^{\pi}\\
		\logsk{0} \  \ar@{^{(}->}[rr] & &\logs .
	}
	\end{equation}
\end{notation}

As described in \cite[\S 13]{Felten-Filip-Ruddat}, for every point $x \in X$, there are some monoids $\mathcal{Q}$ and $P$ as in Notation \ref{not:local_model}, from which we can construct $V \subset \mathbf{V}$, together with a neighborhood $V_x$ of $x$ which can be identified with an open subset $V_x \subset V$, where $V_x$ can further be chosen to be Stein. We fix an open covering $\mathcal{V}$ by Stein open subsets $V_{\alpha}$'s, where $V_{\alpha} = V_x$ for some $x \in X$ which comes with a local $k$-th order thickening $\prescript{k}{}{\mathbf{V}}^{\dagger}_\alpha$'s in $\mathbf{V}_{\alpha}^{\dagger}$ over $\logsk{k}$. We will abuse notations and write $j: V_{\alpha} \setminus Z \rightarrow V_{\alpha}$. 

Following \cite[\S 8]{chan2019geometry} (and using notations from \cite{Gross-Siebert-logI}), we construct an abstract deformation datum $\bva{}$ as follows:
\begin{definition}\label{def:deformation_data_from_gross_siebert}
	\begin{enumerate}
		\item the sheaf $\sla{0}^*$ of dgLa's is given by the push forward $\sla{0}^* := j_* ( \Theta_{X^{\dagger}/\logsk{0}})$ of the analytic sheaf of relative log vector fields concentrated at degree $0$, equipped with the natural Lie bracket;
		
		\item  for each $k \in \mathbb{Z}_{\geq 0}$ and $\alpha$, the sheaf $\sla{k}_{\alpha}^*$ of dgLa's is given by the push forward $\sla{k}_{\alpha}^* := j_* (\Theta_{\prescript{k}{}{\mathbf{V}}_{\alpha}^{\dagger}/\logsk{k}})$ of the analytic sheaf of relative log vector fields concentated at degree $0$, equipped with the natural Lie bracket; 
		
		\item for $k\geq l$ and each $\alpha$, the morphism $\rest{k,l}_{\alpha} : j_* (  \Theta_{\prescript{k}{}{\mathbf{V}}_{\alpha}^{\dagger}/\logsk{k}}) \rightarrow j_* (  \Theta_{\prescript{l}{}{\mathbf{V}}_{\alpha}^{\dagger}/\logsk{l}})$ is obtained from the isomorphism $j_* (  \Theta_{\prescript{k}{}{\mathbf{V}}_{\alpha}^{\dagger}/\logsk{k}}) \otimes_{(\cfrk{k})} \cfrk{l} \cong j_* (  \Theta_{\prescript{l}{}{\mathbf{V}}_{\alpha}^{\dagger}/\logsk{l}})$.
	\end{enumerate}
\end{definition}

Conditions $(1)-(2)$ in Definition \ref{def:abstract_deformation_data} can be easily checked from the definition of the data. Condition $(4)$ follows from the fact that the $\sla{k}_{\alpha}$'s are coherent sheaves (\cite[Lemma 2.4]{Felten-Filip-Ruddat}). To verify condition $(3)$, it suffices to consider $\sla{0}^*$ since $\sla{k}_{\alpha}^*$ is a sheaf of free $\cfrk{k}$-modules with $[\cdot,\cdot]$ being $\cfrk{k}$-linear. Moreover, condition $(3)$ is a local statement which can be checked on $X\setminus Z$ away from the singular locus $Z$. Since $X \setminus Z$ is log smooth over $\logsk{0}$, this can be done directly on log smooth charts covering $X \setminus Z$.

As for the patching datum, consider $x \in V_{\alpha\beta}$ and a Stein open subset $U \subset V_{\alpha\beta}$ containing $x$. Then the two thickening $\prescript{k}{}{\mathbf{V}}_{\alpha}$ and $\prescript{k}{}{\mathbf{V}}_{\beta}$ can be identified via an isomorphism $\prescript{k}{}{\Psi}_{\alpha\beta,U} : \prescript{k}{}{\mathbf{V}}_{\alpha}|_{U} \rightarrow \prescript{k}{}{\mathbf{V}}_{\beta}|_{U}$ as log schemes over $\logsk{k}$ by \cite[Theorem 6.13]{Felten-Filip-Ruddat} (cf. also \cite[Lemma 2.15]{Gross-Siebert-logII}) as in \cite[\S 8]{chan2019geometry}. Taking an open covering $\mathcal{U}$ as in Notation \ref{not:open_stein_covers}, we have the following definition.

\begin{definition}\label{def:higher_order_patching_data_from_gross_siebert}
	For each $k \in \mathbb{N}$ and triple $(U_i; V_\alpha, V_\beta)$ with $U_i \subset V_{\alpha\beta}$, let $\prescript{k}{}{\Psi}_{\alpha\beta,i}$ be the following isomorphism of log schemes:
	$$
	\xymatrix@1{ \prescript{k}{}{\mathbf{V}}_{\alpha}^{\dagger}|_{U_i} \ar[rr]^{\prescript{k}{}{\Psi}_{\alpha\beta,i}} \ar[d]^{\pi_\alpha}& &  \prescript{k}{}{\mathbf{V}}_{\beta}^{\dagger}|_{U_i} \ar[d]^{\pi_{\beta}}\\
		\logsk{k}	\ar@{=}[rr]& & \logsk{k}.
	}
	$$
	The isomorphism of sheaves $\patchh{k}_{\alpha\beta,i} :  j_* (  \Theta_{\prescript{k}{}{\mathbf{V}}_{\alpha}^{\dagger}/\logsk{k}})|_{U_i} \rightarrow  j_* (  \Theta_{\prescript{k}{}{\mathbf{V}}_\beta^{\dagger}/\logsk{k}})|_{U_i}$ which appears in Definition \ref{def:patching_datum} is taken to be that induced by $\prescript{k}{}{\Psi}_{\alpha\beta,i}$. 
\end{definition}

The existence of the vector fields $\restah{k,l}_{\alpha\beta,i}$, $\patchijh{k}_{\alpha\beta,ij}$ and $\cocyobsh{k}_{\alpha\beta\gamma,i}$ in Definition \ref{def:patching_datum} follows from 
the analytic version of \cite[Theorem 2.11]{Gross-Siebert-logII} (we can use this theorem because the local models appearing in \cite{Felten-Filip-Ruddat} are the same as those in \cite{Gross-Siebert-logII}), which implies that
any log automorphism of the space $\prescript{k}{}{\mathbf{V}}_{\alpha}^{\dagger}|_{U_i} $ (resp. $\prescript{k}{}{\mathbf{V}}_{\alpha}^{\dagger}|_{U_{ij}}$) fixing $X|_{U_i}$ (resp. $X|_{U_{ij}}$) is obtained by exponentiating the action of a vector field in $\Theta_{\prescript{k}{}{\mathbf{V}}_{\alpha}^{\dagger}/\logsk{k}}(U_i)$ (resp. $\Theta_{\prescript{k}{}{\mathbf{V}}_{\alpha}^{\dagger}/\logsk{k}}(U_{ij})$). 

\subsection{Construction of abstract deformation data for a pair}\label{sec:pair_data_from_gross_siebert}
We now proceed to construct the abstract deformation data $\gla{}$ associated to the pair $(X,\mathfrak{C}^*)$. By passing to the determinant line bundle, we obtain the abstract deformation data $\bva{}$ associated to the pair $(X, \det \mathfrak{C}^*)$ and also a forgetful morphism $\morph{}: \gla{} \rightarrow \bva{}$, so that we can apply Theorem \ref{thm:main_theorem}.
We will need extra data about smoothing of $\mathfrak{C}^*$ along with $X$. Using the open covering $\{V_{\alpha}\}_\alpha$ and the thickenings $\prescript{k}{}{\mathbf{V}}_{\alpha}$'s in \S \ref{sec:data_from_gross_siebert} and letting $\prescript{k}{}{\mathcal{O}}_{\alpha}:=\mathcal{O}_{\prescript{k}{}{\mathbf{V}}_{\alpha}}$, we introduce the following definition:
\begin{definition}\label{def:geometric_lifting_data_for_pair}
	A {\em geometric lifting datum} $(\{\prescript{k}{}{\mathfrak{C}}_{\alpha}^*\}_{k,\alpha},\{\rest{k,l}_{\alpha}\}_{k\geq l,\alpha})$ of $\mathfrak{C}^*$ consists of
	\begin{itemize}
	\item
	for each $k \geq 0$ and $\alpha$, a perfect complex $\prescript{k}{}{\mathfrak{C}}_{\alpha}^*$ of $\prescript{k}{}{\mathcal{O}}_{\alpha}$-modules over $\prescript{k}{}{\mathbf{V}}_{\alpha}$, and
	\item
	for $k\geq l$ and each $\alpha$, a morphism $\rest{k,l}_{\alpha}: \prescript{k}{}{\mathfrak{C}}_{\alpha}^* \rightarrow \prescript{l}{}{\mathfrak{C}}_{\alpha}^*$ of complexes of $\prescript{k}{}{\mathcal{O}}_{\alpha}$-modules (here $\prescript{l}{}{\mathfrak{C}}_{\alpha}^*$ is treated as an $\prescript{k}{}{\mathcal{O}}_{\alpha}$-module via the natural homomorphism $\rest{k,l}_{\alpha}:\prescript{k}{}{\mathcal{O}}_{\alpha} \rightarrow \prescript{l}{}{\mathcal{O}}_{\alpha}$)
	\end{itemize}
	such that $\prescript{0}{}{\mathfrak{C}}_{\alpha}^* = \mathfrak{C}^*|_{V_{\alpha}}$, and $\rest{k,l}_{\alpha}$ is an isomorphism upon tensoring with $\cfrk{l}$ over $\cfrk{k}$.
\end{definition}

From a geometric lifting datum, we can construct an abstract deformation data $\gla{}$ as defined in Definition \ref{def:abstract_deformation_data} as follows.
On each $V_{\alpha}$, we define the sheaf $\gla{k}_{\alpha}^*$ of Lie algebras as the subsheaf of $\sla{k}_{\alpha} \times \shom_{\cfrk{k}}^*(\prescript{k}{}{\mathfrak{C}}^*_{\alpha},\prescript{k}{}{\mathfrak{C}}^*_{\alpha})$ whose stalk at $x$ is given by $\sla{k}_{\alpha,x}^*(\prescript{k}{}{\mathfrak{C}}^*_x)$ as in Definition \ref{def:Lie_algebra_of_pairs}, which comes with the natural morphism of sheaves of Lie algebras $\rest{k,l}_{\alpha} : \gla{k}_{\alpha}^* \rightarrow \gla{l}_{\alpha}^*$. There is a natural exact sequence of sheaves of Lie algebras
\begin{equation}\label{eqn:achnor_map_sequence}
0 \rightarrow \shom_{\prescript{k}{}{\mathcal{O}}_{\alpha}}^*(\prescript{k}{}{\mathfrak{C}}^*_{\alpha},\prescript{k}{}{\mathfrak{C}}^*_{\alpha}) \rightarrow \gla{k}_{\alpha}^* \rightarrow \sla{k}_{\alpha}^* \rightarrow 0.
\end{equation}
Conditions $(1),(2),(4)$ in Definition \ref{def:abstract_deformation_data} follow immediately from this exact sequence.
Condition $(3)$ does not hold since $\ker(\text{ad}) = \prescript{k}{}{\mathcal{O}}_{\alpha} \cdot \text{Id}_{\prescript{k}{}{\mathfrak{C}}^*_{\alpha}}$ (i.e., multiples of the identity endomorphism in $\shom_{(\prescript{k}{}{\mathcal{O}}_{\alpha})}^*(\prescript{k}{}{\mathfrak{C}}^*_{\alpha},\prescript{k}{}{\mathfrak{C}}^*_{\alpha})$). In this case one can easily check that $\text{ad}\left(\gla{k}_{\alpha}^*\right)$ satisfies conditions $(1),(2),(4)$ in Definition \ref{def:abstract_deformation_data}, so we obtain an abstract deformation datum $\gla{}$ under the weakened assumption as described in \S \ref{sec:weaken_assumption}. 
Now, we take $\bva{k}_{\alpha}$ to be the subsheaf of $\sla{k}_{\alpha} \times \shom_{\cfrk{k}}(\det \prescript{k}{}{\mathfrak{C}}^*_{\alpha},\det \prescript{k}{}{\mathfrak{C}}^*_{\alpha})$ whose stalk at $x$ is given by $\sla{k}_{\alpha,x}(\det \prescript{k}{}{\mathfrak{C}}^*_x)$ as described in \S \ref{sec:abstract_algebra_for_deformation_of_pairs}.

Next, using the geometric isomorphisms $\prescript{k}{}{\Psi}_{\alpha\beta,i} : \prescript{k}{}{\mathbf{V}}_{\alpha}^{\dagger}|_{U_i} \rightarrow \prescript{k}{}{\mathbf{V}}_{\beta}^{\dagger}|_{U_i}$ between log schemes over $\logsk{k}$ described in \S \ref{sec:data_from_gross_siebert}, we introduce the following notion of a geometric patching datum:

\begin{definition}\label{def:geometric_patching_data_of_pair}
	A {\em geometric patching datum} for $(\{\prescript{k}{}{\mathfrak{C}}_{\alpha}^*\}_{k,\alpha},\{\rest{k,l}_{\alpha}\}_{k\geq l,\alpha})$ consists of, for each $k \in \mathbb{N}$ and $(U_i; V_\alpha, V_\beta)$ with $U_i \subset V_{\alpha\beta} := V_{\alpha} \cap V_{\beta}$, an isomorphism $$\prescript{k}{}{\Xi}_{\alpha\beta,i} : \prescript{k}{}{\mathfrak{C}}_{\alpha}^*|_{U_i} \rightarrow \prescript{k}{}{\mathfrak{C}}_{\beta}^*|_{U_i}$$ of complexes of sheaves of modules which is compatible with the isomorphism $\prescript{k}{}{\Psi}_{\beta\alpha,i}^* : \prescript{k}{}{\mathcal{O}}_{\alpha}|_{U_i} \rightarrow \prescript{k}{}{\mathcal{O}}_{\beta}|_{U_i}$ of sheaves of rings, such that $\prescript{0}{}{\Xi}_{\alpha\beta,i} = \text{id}$. 
\end{definition}

Given a geometric patching datum $\{\prescript{k}{}{\Xi}_{\alpha\beta,i}\}$ of $(\{\prescript{k}{}{\mathfrak{C}}_{\alpha}^*\}_{k,\alpha},\{\rest{k,l}_{\alpha}\}_{k\geq l,\alpha})$, we get an isomorphism $\patchp{k}_{\alpha\beta,i} : \shom_{\cfrk{k}}(\prescript{k}{}{\mathfrak{C}}^*_{\alpha},\prescript{k}{}{\mathfrak{C}}^*_{\alpha})|_{U_i} \rightarrow \shom_{\cfrk{k}}(\prescript{k}{}{\mathfrak{C}}^*_{\beta},\prescript{k}{}{\mathfrak{C}}^*_{\beta})|_{U_i}$ of sheaves obtained via $\prescript{k}{}{\Xi}_{\alpha\beta,i}$'s. Putting this together with $\patchh{k}_{\alpha\beta,i} : \sla{k}_{\alpha}|_{U_i} \rightarrow \sla{k}_{\beta}|_{U_i}$, we obtain an isomorphism 
$$\patchp{k}_{\alpha\beta,i} : \sla{k}_{\alpha}|_{U_i} \times \shom_{\cfrk{k}}(\prescript{k}{}{\mathfrak{C}}^*_{\alpha},\prescript{k}{}{\mathfrak{C}}^*_{\alpha})|_{U_i} \rightarrow \sla{k}_{\beta}|_{U_i} \times \shom_{\cfrk{k}}(\prescript{k}{}{\mathfrak{C}}^*_{\beta},\prescript{k}{}{\mathfrak{C}}^*_{\beta})|_{U_i}$$
of sheaves over $V_{\alpha}$. It induces the corresponding isomorphism $\patchp{k}_{\alpha\beta,i} : \gla{k}_{\alpha}^*|_{U_i} \rightarrow \gla{k}_{\beta}^*|_{U_i}$ of sheaves of dgLa's over $V_{\alpha}$, producing a patching datum for $\gla{} = \{ \gla{k}_{\alpha} \}_{k,\alpha}$. By repeating the same construction for the geometric patching datum $\det \prescript{k}{}{\Xi}_{\alpha\beta,i}: \det \prescript{k}{}{\mathfrak{C}}_{\alpha}^*|_{U_i} \rightarrow \det \prescript{k}{}{\mathfrak{C}}_{\beta}^*|_{U_i} $, we obtain the isomorphism $\patch{k}_{\alpha\beta,i} : \bva{k}_{\alpha}^*|_{U_i} \rightarrow \bva{k}_{\beta}^*|_{U_i}$ of sheaves of dgLa's. 

We should indicate how to construct the elements $\restap{k,l}_{\alpha\beta,i}$'s, $\patchijp{k}_{\alpha\beta,i}$'s and $\cocyobsp{k}_{\alpha\beta\gamma,i}$'s. For $\cocyobsp{k}_{\alpha\beta\gamma,i}$'s, we observe that $\prescript{k}{}{\Xi}_{\gamma\alpha,i} \circ \prescript{k}{}{\Xi}_{\beta \gamma,i} \circ \prescript{k}{}{\Xi}_{\alpha\beta,i} : \prescript{k}{}{\mathfrak{C}}^*_{\alpha}|_{U_i} \rightarrow \prescript{k}{}{\mathfrak{C}}^*_{\alpha}|_{U_i}$ is an automorphism of complexes, which is compatible with the natural automorphism $\prescript{k}{}{\Psi}_{\alpha\gamma,i}^* \circ \prescript{k}{}{\Psi}_{ \gamma\beta,i}^* \circ \prescript{k}{}{\Psi}_{\beta\alpha,i}^* : \mathcal{O}_{\alpha}|_{U_i} \rightarrow \mathcal{O}_{\alpha}|_{U_i}$. Since $\prescript{0}{}{\Xi}_{\gamma\alpha,i} \circ \prescript{0}{}{\Xi}_{\beta \gamma,i} \circ \prescript{0}{}{\Xi}_{\alpha\beta,i} = \text{id}$, we can define $\cocyobsp{k}_{\alpha\beta\gamma,i} := \left(\cocyobs{k}_{\alpha\beta\gamma,i},\log(\prescript{k}{}{\Xi}_{\gamma\alpha,i} \circ \prescript{k}{}{\Xi}_{\beta \gamma,i} \circ \prescript{k}{}{\Xi}_{\alpha\beta,i}) \right) \in \gla{k}_{\alpha}^0(U_i)$.  Similarly, we define $\restap{k,l}_{\alpha\beta,i} := \left( \resta{k,l}_{\alpha\beta,i}, \log (\prescript{l}{}{\Xi}_{\beta \alpha,i} \circ (\prescript{k}{}{\Xi}_{\alpha\beta,i} \otimes_{(\cfrk{k})} \cfrk{l})) \right)$ and $\patchijp{k}_{\alpha\beta,ij}  := \left(\patchij{k}_{\alpha\beta,ij}, \log( \prescript{k}{}{\Xi}_{\beta \alpha,j}|_{U_{ij}} \circ \prescript{k}{}{\Xi}_{\alpha\beta,i}|_{U_{ij}}) \right)$. Therefore we obtain a patching data $\patchp{}$ for $\gla{}$. Repeating the above for $\det \prescript{k}{}{\Xi}_{\alpha\beta,i}$ produces the patching datum for $\bva{}$.
The trace map $\trace : \sla{k}_{\alpha}^*(\prescript{k}{}{\mathfrak{C}}^*_{\alpha}) \rightarrow \sla{k}_{\alpha}^0(\det \prescript{k}{}{\mathfrak{C}}^*_{\alpha})$ described in \S \ref{sec:abstract_algebra_for_deformation_of_pairs} gives $\morph{k}_{\alpha} : \gla{k}_{\alpha}^* \rightarrow  \bva{k}_{\alpha}$ which is a surjective morphism of sheaves of dgLa's. One can easily check conditions $(1)-(5)$ in Definition \ref{def:morphism_of_deformation_data}. We thus obtain a morphism $\morph{} : (\gla{},\patchp{}) \rightarrow (\bva{},\patch{})$.

Let $\mcsimplicialp{\bullet}(\simplex_{\bullet})$ and $\mcsimplicial{\bullet}(\simplex_{\bullet})$ be the Maurer-Cartan functors associated to the deformation data $(\gla{},\patchp{})$ and $(\bva{},\patch{})$ respectively.

\begin{corollary}\label{cor:smoothing_of_perfect_complexes}
	Let $X$ be a projective toroidal crossing space which is Calabi-Yau and with $\mathcal{T}^1_X := \mathcal{E}\text{xt}^1(\Omega_X,\mathcal{O}_X)$ globally generated. Let $\mathfrak{C}^*$ be a bounded complex of locally free sheaves (of finite rank) over $X$.
	Suppose the pair $(X,\mathfrak{C}^*)$ is equipped with a geometric lifting datum $(\{\prescript{k}{}{\mathfrak{C}}_{\alpha}^*\}_{k,\alpha},\{\rest{k,l}_{\alpha}\}_{k\geq l,\alpha})$ (Definition \ref{def:geometric_lifting_data_for_pair}) and a geometric patching datum $\{\prescript{k}{}{\Xi}_{\alpha\beta,i}^*\}_{k,\alpha\beta,i}$ (Definition \ref{def:geometric_patching_data_of_pair}).
	If $\text{Ext}^2(\mathfrak{C}^*,\mathfrak{C}^*)_0 = 0$, then the morphism $\morph{}:\mcsimplicialp{\bullet}(\simplex_{\bullet}) \rightarrow \mcsimplicial{\bullet}(\simplex_{\bullet})$ between Maurer-Cartan functors is smooth.
\end{corollary}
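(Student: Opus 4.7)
The plan is to derive this corollary by applying Theorem \ref{thm:main_theorem} directly to the morphism $\morph{} : (\gla{},\patchp{}) \rightarrow (\bva{},\patch{})$ constructed in \S \ref{sec:pair_data_from_gross_siebert}, where $\gla{}$ is the dgLa-valued deformation datum governing $(X,\mathfrak{C}^*)$ and $\bva{}$ is the one governing $(X, \det \mathfrak{C}^*)$, and $\morph{}$ is induced by the trace map. To invoke that theorem, I need to verify two items: (a) each sheaf morphism $\morph{k}_{\alpha}: \gla{k}_{\alpha}^* \rightarrow \bva{k}_{\alpha}^*$ is surjective, and (b) $\mathbb{H}^2(X,\kla^*) = 0$, where $\kla = \ker(\morph{0})$. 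Since $\gla{}$ has been shown to satisfy the weakened Definition \ref{def:abstract_deformation_data} via condition $(3')$ of \S \ref{sec:weaken_assumption}, and the same verification goes through for $\bva{}$ (with $\ker(\text{ad})$ being $\prescript{k}{}{\mathcal{O}}_{\alpha} \cdot \text{Id}$ since $\det \prescript{k}{}{\mathfrak{C}}^*_{\alpha}$ is a line bundle), the abstract framework is already in place.

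For surjectivity I would work locally on a Stein open $V_{\alpha}$ on which each $\prescript{k}{}{\mathfrak{C}}^l_{\alpha}$ is a free $\prescript{k}{}{\mathcal{O}}_{\alpha}$-module of finite rank. In each degree $i \neq 0$ the target $\bva{k,i}_{\alpha}$ vanishes, since $\det \prescript{k}{}{\mathfrak{C}}^*_{\alpha}$ is concentrated in degree $0$, so there is nothing to check. In degree $0$, a section $(h,v)$ of $\bva{k,0}_{\alpha}$ amounts to a relative log derivation $h$ of $\prescript{k}{}{\mathcal{O}}_{\alpha}$ together with an $h$-derivation $v$ of the locally trivial line bundle $\det \prescript{k}{}{\mathfrak{C}}^*_{\alpha}$; such a $v$ is determined by its value on a local generator, and I lift it to $(h,u) \in \gla{k,0}_{\alpha}$ by spreading that value along a single diagonal entry of $\prescript{k}{}{\mathfrak{C}}^*_{\alpha}$ (with appropriate sign weights dictated by $\det M^* = \bigotimes^{l=\text{ev}} \det M^l \otimes \bigotimes^{l=\text{odd}} \det (M^l)^\vee$). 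This gives the required surjectivity.

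For the hypercohomology vanishing, I would identify $\kla^*$ explicitly. In degree $0$ the kernel of the trace map on $\gla{0,0}$ is exactly $\{(0,u) \mid \trace(u) = 0\}$, the sheaf of traceless degree-$0$ endomorphisms of $\mathfrak{C}^*$; in degree $i \neq 0$, $\bva{0,i}$ vanishes and so the kernel is the whole of $\shom^i_{\mathcal{O}_X}(\mathfrak{C}^*,\mathfrak{C}^*)$. Thus as a complex of sheaves, $\kla^* \cong \shom^*_{\mathcal{O}_X}(\mathfrak{C}^*,\mathfrak{C}^*)_0$ with its usual differential $d u = d_{\mathfrak{C}} \circ u - (-1)^{|u|} u \circ d_{\mathfrak{C}}$. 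Because $\mathfrak{C}^*$ is a bounded complex of locally free sheaves of finite rank, this complex computes $R\shom_{\mathcal{O}_X}(\mathfrak{C}^*,\mathfrak{C}^*)_0$, so $\mathbb{H}^2(X,\kla^*) = \text{Ext}^2(\mathfrak{C}^*,\mathfrak{C}^*)_0$, which vanishes by hypothesis. Applying Theorem \ref{thm:main_theorem} then yields the asserted smoothness of $\morph{}:\mcsimplicialp{\bullet}(\simplex_{\bullet}) \rightarrow \mcsimplicial{\bullet}(\simplex_{\bullet})$. I do not expect any real obstacle: all the heavy lifting resides in Theorem \ref{thm:main_theorem} and the constructions of \S \ref{sec:pair_data_from_gross_siebert}; the only mild care point is making sure the weakened condition $(3')$ applies to $\bva{}$ as well, which follows just as in the $\gla{}$ case.
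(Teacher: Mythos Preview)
Your proposal is correct and follows exactly the same approach as the paper: apply Theorem~\ref{thm:main_theorem} to the trace morphism $\morph{}:(\gla{},\patchp{})\to(\bva{},\patch{})$ constructed in \S\ref{sec:pair_data_from_gross_siebert}, after identifying $\kla^* \cong \shom^*_{\mathcal{O}_X}(\mathfrak{C}^*,\mathfrak{C}^*)_0$ so that $\mathbb{H}^2(X,\kla^*) = \text{Ext}^2(\mathfrak{C}^*,\mathfrak{C}^*)_0 = 0$. The paper's proof is a single line invoking exactly this, with the surjectivity of $\morph{k}_{\alpha}$ already asserted in the paragraph preceding the corollary; your additional details on surjectivity and on condition~$(3')$ for $\bva{}$ are sound and simply flesh out what the paper treats as already established.
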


\begin{proof}
	The statement follows from Theorem \ref{thm:main_theorem} as we have $\mathbb{H}^2(X, \kla) = \text{Ext}^2(\mathfrak{C}^*,\mathfrak{C}^*)_0 = 0$.
	\end{proof}

In geometric situations, we are interested in the following notion concerning smoothing of geometric objects. 

\begin{definition}\label{def:formally_smoothable}
	We say that a variety $X$ is {\em formally smoothable} if for every $k \in \mathbb{N}$, there exists a flat family of log schemes $\pi : \prescript{k}{}{X}^{\dagger} \rightarrow \logsk{k}$ such that $X^{\dagger} = \prescript{0}{}{X}^{\dagger}$,  $\prescript{k+1}{}{X}^{\dagger} \times_{(\logsk{k+1})} \logsk{k} = \prescript{k}{}{X}^{\dagger}$ and $\prescript{k}{}{X}^{\dagger}|_{V_{\alpha}} = \prescript{k}{}{\mathbf{V}}^{\dagger}_{\alpha}$ for every $\alpha$ and $k \in \mathbb{N}$. 
	
	A pair $(X,\mathfrak{C}^*)$ is said to be {\em formally smoothable} if for every $k \in \mathbb{N}$, there exists a pair $(\prescript{k}{}{X},\prescript{k}{}{\mathfrak{C}}^*)$, where $\pi : \prescript{k}{}{X}^{\dagger} \rightarrow \logsk{k}$ is a flat family of log schemes as above and $\prescript{k}{}{\mathfrak{C}}^*$ is a bounded complex of locally free sheaves on $\prescript{k}{}{X}^{\dagger}$, such that $\prescript{k+1}{}{\mathfrak{C}}^*\otimes_{(\cfrk{k+1})} \cfrk{k}= \prescript{k}{}{\mathfrak{C}}^*$ and $\prescript{k}{}{\mathfrak{C}}^*|_{V_{\alpha}} = \prescript{k}{}{\mathfrak{C}}^*_{\alpha}$ for every $\alpha$ and $k \in \mathbb{N}$.
\end{definition}

Suppose that $X$ is formally smoothable. Then the isomorphism $\prescript{k}{}{\mathbf{V}}_\alpha^{\dagger}|_{V_{\alpha\beta}} \cong \prescript{k}{}{X}^{\dagger} \cong \prescript{k}{}{\mathbf{V}}_\beta^{\dagger}|_{V_{\alpha\beta}}$ induces an isomorphism $\sla{k}_{\alpha}|_{V_{\alpha \beta}} \cong \sla{k}_{\beta}|_{V_{\alpha\beta}}$, which can be further passed to the associated Thom-Whitney resolutions to give a set of compatible gluing morphisms $\glueh{k}_{\alpha\beta}' : \twc{k}_{\alpha;\alpha\beta} \rightarrow \twc{k}_{\beta;\alpha\beta}$ over $\simplex_0$.
Taking $\dbtwisth{k}_{\alpha}' = 0$ gives a set of compatible differentials with respect to $\glueh{k}_{\alpha\beta}'$'s over $\simplex_0$. 
Now $\prescript{k}{}{\hat{\varphi}}' = 0$ will be a Maurer-Cartan element over $\cfrk{k}$ with respect to $(\glueh{}',\dbtwisth{}')$ for each $k$. If we have another set of compatible gluing morphisms and differentials $(\glueh{},\dbtwisth{})$, then Lemma \ref{lem:gluing_data_contractible} gives a set of compatible gluing morphisms and differentials $(\glueh{}(\simplex_1),\dbtwisth{}(\simplex_1))$ connected to $(\glueh{}',\dbtwisth{}')$. Making use of Lemma \ref{lem:kan_complex_lemma}, we can then inductively construct a system of Maurer-Cartan elements $\prescript{k}{}{\hat{\varphi}}'s$ such that $\prescript{k+1}{}{\hat{\varphi}} \equiv \prescript{k}{}{\hat{\varphi}} \ (\text{mod $q^{k+1}$})$, with respect to $(\glueh{},\dbtwisth{})$. 
Similarly, if $(X,\mathfrak{C}^*)$ is formally smoothable, then for any set of compatible gluing morphisms and differentials $(\glue{},\dbtwist{})$, we have a system of Maurer-Cartan elements $\prescript{k}{}{\varphi}$'s such that $\prescript{k+1}{}{\varphi} \equiv \prescript{k}{}{\varphi} \ (\text{mod $q^{k+1}$})$.

\begin{proof}[Proof of Theorem \ref{thm:intro_main_theorem}]
	Under the assumption that the pair $(X,\det(\mathfrak{C}^*))$ is formally smoothable, we have a compatible system of Maurer-Cartan elements $\{\prescript{k}{}{\phi}\}_{k \in \mathbb{N}}$ by the above discussion. 
	Since $\morph{}:\mcsimplicialp{\bullet}(\simplex_{\bullet}) \rightarrow \mcsimplicial{\bullet}(\simplex_{\bullet})$ is smooth by Corollary \ref{cor:smoothing_of_perfect_complexes}, we can then inductively construct the desired compatible system of Maurer-Cartan elements. 
	
	To prove the second statement, we assume instead that $H^2(X, \mathcal{O}_X) = 0$.
	Then we can consider the Maurer-Cartan functor $\mcsimplicialh{\bullet}(\simplex_{\bullet})$ associated to the deformation datum $(\sla{},\patchh{})$. The anchor map described in Definition \ref{def:Lie_algebra_of_pairs} gives a map $\morphh{k}_\alpha: \sla{k}_{\alpha}^0(\det \prescript{k}{}{\mathfrak{C}}^*_{\alpha}) \rightarrow \sla{k}_{\alpha}$ for each $\alpha$, which patch together to give a morphism $\morphh{}: (\bva{},\patch{}) \rightarrow (\sla{},\patchh{})$ between abstract deformation data, thereby inducing a map $\morphh{} : \mcsimplicial{\bullet}(\simplex_{\bullet}) \rightarrow \mcsimplicialh{\bullet}(\simplex_{\bullet})$ between the associated Maurer-Cartan functors. Applying Theorem \ref{thm:main_theorem} to $\morphh{}$ and noting that $\mathbb{H}^2(X,\kla^*)  = H^2(X,\mathcal{O}_X) = 0$, we conclude that $\morphh{} : \mcsimplicial{\bullet}(\simplex_{\bullet})\rightarrow \mcsimplicialh{\bullet}(\simplex_{\bullet})$ is smooth. 
	Notice that the toroidal crossing space $X$ is formally smoothable by the results in \cite{chan2019geometry, Felten-Filip-Ruddat}. So by repeating the argument in the paragraph right before this proof, we can construct a compatible system of Maurer-Cartan elements $\{\prescript{k}{}{\phi}\}_{k\in \mathbb{N}}$, from which we can deduce the result by arguing as in the proof of the first statement.
	(By the discussion in \S \ref{sec:geometric_gluing_from_mc}, which is independent from the proof here, we can actually construct a formal smoothing of the pair $(X, \det \mathfrak{C}^*)$ from the system of Maurer-Cartan elements $\{\prescript{k}{}{\phi}\}_{k\in \mathbb{N}}$. So the second statement of Theorem \ref{thm:intro_main_theorem} is indeed a special case of the first one.)
\end{proof}

\subsection{Proof of Theorem \ref{thm:intro_main_application} -- geometric smoothing from Maurer-Cartan solutions}\label{sec:geometric_gluing_from_mc}

In this subsection, we explain how to apply the technique in \cite[\S 5.3]{chan2019geometry} to the compatible set of Maurer-Cartan elements constructed in Theorem \ref{thm:intro_main_theorem} to construct a geometric formal smoothing of the pair $(X, \prescript{}{}{\mathfrak{F}})$, where $X$ is as in Theorem \ref{thm:intro_main_application} and $\prescript{}{}{\mathfrak{F}}$ is a locally free sheaf on $X$ to be regarded as a complex concentrated in degree $0$.

First of all, if $\mathfrak{C}^*$ consists of just one locally free sheaf $\mathfrak{F}$ of rank $r$ concentrated at a fixed degree (which, without loss of generality, can be assumed to be zero), then we can construct a geometric lifting datum and a geometric patching datum for $\mathfrak{F}$ as follows. On a Stein open subset $V_{\alpha}$, we can trivialize $\mathfrak{F} = \bigoplus_{i=1}^r \mathcal{O}_{X}|_{V_{\alpha}} \cdot e_i$ and take $\mathfrak{F}_{\alpha} := \bigoplus_{i=1}^r \prescript{k}{}{\mathcal{O}}_{\alpha} \cdot e_i$. Then $\rest{k,l}_{\alpha}$ is simply given by identifying the frame $\{e_i\}$ in both trivializations and taking the restriction $\rest{k,l}_{\alpha} : \prescript{k}{}{\mathcal{O}}_{\alpha} \rightarrow \prescript{l}{}{\mathcal{O}}_{\alpha}$ on the coefficients. This gives a geometric lifting datum. Furthermore, one can write $\prescript{0}{}{\Xi}_{\alpha\beta,i} (e_s) = \sum_{t=1}^r A_{ts} e_t$ in terms of matrices $A_{ts} \in \mathcal{O}_{X}(U_i)$. We can then construct a patching datum by setting $\prescript{k}{}{\Xi}_{\alpha\beta,i}(e_s) := \sum_{t=1}^r \prescript{k}{}{A}_{ts} e_t$, where $\prescript{k}{}{A}_{ts} \in \prescript{k}{}{\mathcal{O}}_{\beta}(U_i)$ are elements lifting $A_{ts}$, and then extend linearly to make it compatible with the map $\prescript{k}{}{\Psi}_{\beta\alpha,i}^* : \prescript{k}{}{\mathcal{O}}_{\alpha}|_{U_i} \rightarrow \prescript{k}{}{\mathcal{O}}_{\beta}|_{U_i}$.  

To prove Theorem \ref{thm:intro_main_application}, we first assume that $\text{Ext}^2(\mathfrak{F},\mathfrak{F})_0  = 0$ and that the pair $(X, \det \mathfrak{F})$ is formally smoothable.
For the rest of this section, we restrict our attention to the $0$-simplex $\simplex_0$ (and will omit $\simplex_0$ from our notations). Let $\gluep{} = (\gluep{k}_{\alpha\beta})$ be a compatible gluing morphism for $(\gla{},\patchp{})$ over $\simplex_0$, which is given by $\iautop{k}_{\alpha\beta,i} $'s, $\sautop{k}_{\alpha\beta,i_0\cdots i_l}$'s and $\patchp{k}_{\alpha\beta,i}$'s as in Condition \ref{assum:induction_hypothesis} and Definition \ref{def:compatible_gluing_morphism}. It determines a compatible gluing morphism $\glueh{}$ for $(\sla{},\patchh{})$ with data $\iautoh{k}_{\alpha\beta,i} $'s, $\sautoh{k}_{\alpha\beta,i_0\cdots i_l}$'s and $\patchh{k}_{\alpha\beta,i}$'s from the natural morphism $ \morphc{k}_{\alpha} : \gla{k}_{\alpha} \rightarrow \sla{k}_{\alpha}$ in \eqref{eqn:achnor_map_sequence}). Furthermore, we fix a compatible differential $\dbtwistp{} = (\dbtwistp{k}_{\alpha})$ for $\gluep{}$ which determines $\dbtwisth{}  = \morphc{}(\dbtwistp{} )$ for $\glueh{}$. 
By Theorem \ref{thm:intro_main_theorem}, there is a Maurer-Cartan solution $\prescript{k}{}{\varphi} = (\prescript{k}{}{\varphi}_{\alpha})_{\alpha}$ of the dgLa $\polyv{k}(\gluep{},\mathfrak{F})$ for each $k \in \mathbb{N}$ such that $\rest{k,l}(\prescript{k}{}{\varphi}) = \prescript{l}{}{\varphi}$. 

Similar to Definitions \ref{def:thom_whitney_general} and \ref{def:cech_thom_whitney_complex}, we define
\begin{align*}
	\two{k}^{*}_{\alpha}(W) & := \left\{  (\phi_{i_0 \cdots i_l})_{(i_0,\dots,  i_l) \in \mathcal{I}} \mid \phi_{i_0 \cdots i_l} \in \mathcal{A}^*(\simplex_l) \otimes \prescript{k}{}{\mathcal{O}}_{\alpha_j} (U_{i_0\cdots i_l}), \mathtt{d}_{j,l}^* (\phi_{i_0 \cdots i_l}) = \phi_{i_0 \cdots \hat{i}_j \cdots i_l} |_{U_{i_0 \cdots i_l}} \right\}, \\
	\tf{k}^{*}_{\alpha}(W) & := \left\{ (\phi_{i_0 \cdots i_l})_{(i_0,\dots,  i_l) \in \mathcal{I}} \mid \phi_{i_0 \cdots i_l} \in \mathcal{A}^*(\simplex_l)  \otimes \prescript{k}{}{\mathfrak{F}}_{\alpha}(U_{i_0\cdots i_l}), \mathtt{d}_{j,l}^* (\phi_{i_0 \cdots i_l}) = \phi_{i_0 \cdots \hat{i}_j \cdots i_l} |_{U_{i_0 \cdots i_l}} \right\},
\end{align*}
where $\mathcal{I} = \{ (i_0,\cdots,i_l) \mid U_{i_j} \subset W \}$ is a covering for an open subset $W \subset V_{\alpha}$.
For $W \subset W'$, we have the restriction map $\restmap_{W,W'}$ defined by
$$
\restmap_{W,W'} \Big( (\phi_{I})_{I \in \mathcal{I}} \Big) = (\phi_I)_{I \in \mathcal{I}'},
$$
where $\mathcal{I}' = \{ (i_0,\dots,i_l) \in \mathcal{I} \mid U_{i_j} \subset W'\}$ as in Notation \ref{not:local_thom_whitney_complex}. We equip $\two{k}^{*}_{\alpha}$ and $\tf{k}^{*}_{\alpha}$ with the operator $\prescript{k}{}{\pdb}_{\alpha} + (\dbtwistp{k}_{\alpha} + \prescript{k}{}{\varphi}_{\alpha}) \cdot $ where the action is defined via the natural actions of $\gla{k}_{\alpha}$ on $\prescript{k}{}{\mathcal{O}}_{\alpha}$ and $\prescript{k}{}{\mathfrak{F}}_{\alpha}$ respectively. This turns $\two{k}^{*}_{\alpha}$ into a presheaf of dga's over $V_{\alpha}$ and $\tf{k}^{*}_{\alpha}$ into a presheaf of dg modules over $\two{k}^{*}_{\alpha}$. 

We can define a gluing morphism
$$\glueh{k}_{\alpha\beta} (\phi_{i_0\cdots i_l}) = \exp(\sautoh{k}_{\alpha\beta,i_0\cdots i_l}  \cdot ) \circ  \exp(\iautoh{k}_{\alpha\beta,i} \cdot ) \circ \prescript{k}{}{\Psi}_{\beta\alpha,i_0}^* (\phi_{i_0\cdots i_l}) $$
for $(\phi_{i_0\cdots i_l}) \in \two{k}_{\alpha}^{*}|_{V_{\alpha\beta}}$ using this action. The compatibility of the gluing morphism $\glueh{}$ allows us to glue these presheaves together to obtain a presheaf $\prescript{k}{}{\mathcal{O}}^*(\glueh{})$ of dga's. Similarly, by letting $$\gluep{k}_{\alpha\beta} (\phi_{i_0\cdots i_l}) = \exp(\sautop{k}_{\alpha\beta,i_0\cdots i_l} \cdot ) \circ  \exp(\iautop{k}_{\alpha\beta,i}  \cdot ) \circ \prescript{k}{}{\Xi}_{\alpha\beta,i_0} (\phi_{i_0\cdots i_l}),$$
we obtain a gluing homomorphism $\gluep{k}_{\alpha\beta} : \tf{k}_{\alpha}^{*}|_{V_{\alpha\beta}} \rightarrow \tf{k}_{\beta}^{*}|_{V_{\alpha\beta}}$. This gives complexes $\prescript{k}{}{\mathcal{O}}^*(\glue{})$ and $\prescript{k}{}{\mathfrak{F}}^{*}(\gluep{})$, where the operator $\prescript{k}{}{\pdb} +  \prescript{k}{}{\varphi}$ is defined by gluing the local operators $\prescript{k}{}{\pdb}_{\alpha}+ \dbtwistp{k}_{\alpha} +  \prescript{k}{}{\varphi}_{\alpha} \cdot $ together. Therefore we obtain a global presheaf $\prescript{k}{}{\mathfrak{F}}^{*}(\gluep{})$ of dg modules over the global presheaf $\prescript{k}{}{\mathcal{O}}^*(\glue{})$ of dga's. The above construction is compatible with the natural maps $\rest{k,l} : \prescript{k}{}{\mathcal{O}}^*(\glue{}) \rightarrow \prescript{l}{}{\mathcal{O}}^*(\glue{})$ and $\rest{k,l} : \prescript{k}{}{\mathfrak{F}}^*(\gluep{}) \rightarrow \prescript{l}{}{\mathfrak{F}}^*(\gluep{})$ induced by $\rest{k,l}_{\alpha}$ locally.

On the Stein open subset $V_{\alpha}$, we notice that $\prescript{k}{}{\pdb}_{\alpha}+ \dbtwistp{k}_{\alpha} +  \prescript{k}{}{\varphi}_{\alpha} \cdot $ is gauge equivalent to $\prescript{k}{}{\pdb}_{\alpha}$ via an element $\sauto{k}_{\alpha} \in \twc{k}^{0,0}_{\alpha}$. Conjugating with the automorphism $\exp(\prescript{k}{}{\vartheta}_\alpha \cdot)$ acting on 
$\tf{k}^{*}_{\alpha}$, and similarly on $\two{k}^*_{\alpha}$, for each $\alpha$, we obtain a gluing morphism $\prescript{k}{}{\mathbf{g}}_{\alpha\beta} : \tf{k}^{*}_{\alpha}|_{V_{\alpha\beta}} \rightarrow  \tf{k}^{*}_{\beta}|_{V_{\alpha\beta}}$, which fit into the following commutative diagram:
$$
\xymatrix@1{
	\tf{k}^{*}_{\alpha}|_{V_{\alpha\beta}} \ar[rr]^{\gluep{k}_{\alpha\beta}} \ar[d]^{\exp(\prescript{k}{}{\vartheta}_\alpha \cdot)} & &  \tf{k}^{*}_{\beta}|_{V_{\alpha\beta}}
	\ar[d]^{\exp(\prescript{k}{}{\vartheta}_\beta \cdot)}\\
	( \tf{k}^{*}_{\alpha}|_{V_{\alpha\beta}}, \prescript{k}{}{\pdb}_{\alpha}) \ar[rr]^{\prescript{k}{}{\mathbf{g}}_{\alpha\beta}} & &  (\tf{k}^{*}_{\beta}|_{V_{\alpha\beta}},\prescript{k}{}{\pdb}_{\beta})};
$$
here we emphasize that $\prescript{k}{}{\mathbf{g}}_{\alpha\beta}$ identifies the differentials $\prescript{k}{}{\pdb}_{\alpha}$ and $\prescript{k}{}{\pdb}_{\beta}$.

Now for any Stein open subset $U \subset V_{\alpha}$, we define
$$\prescript{k}{}{\mathfrak{F}}(U)  := H^0(\tf{k}^{*}_{\alpha}(U),\prescript{k}{}{\pdb}_\alpha).$$
Then the maps $\prescript{k}{}{\mathbf{g}}_{\alpha\beta} : \prescript{k}{}{\mathfrak{F}}_{\alpha}|_{V_{\alpha\beta}} \rightarrow \prescript{k}{}{\mathfrak{F}}_{\beta}|_{V_{\alpha\beta}}$ give isomorphisms of sheaves which satisfy the cocycle condition. Therefore we obtain a global sheaf $\prescript{k}{}{\mathfrak{F}}$. Similarly, the isomorphisms $\prescript{k}{}{\mathbf{g}}_{\alpha\beta} : \prescript{k}{}{\mathcal{O}}_{\alpha}|_{V_{\alpha\beta}} \rightarrow \prescript{k}{}{\mathcal{O}}_{\beta}|_{V_{\alpha\beta}}$ produce the global structure sheaf $\prescript{k}{}{\mathcal{O}}$, which defines a $k$-th order thickening of $X$. The $\prescript{k}{}{\mathcal{O}}$-module $\prescript{k}{}{\mathfrak{F}}$ is then a $k$-th order thickening of $\mathfrak{F}$.
In conclusion, we obtain a $k$-th order thickening $(\prescript{k}{}{\mathcal{O}}, \prescript{k}{}{\mathfrak{F}})$ of the pair $(X,\mathfrak{F})$ over $\cfrk{k} = \comp[q]/(q^{k+1})$ for each $k \in \mathbb{N}$ such that $\prescript{k+1}{}{\mathcal{O}} \otimes_{(\cfrk{k+1})} \cfrk{k}= \prescript{k}{}{\mathcal{O}}$ and $\prescript{k+1}{}{\mathfrak{F}} \otimes_{(\cfrk{k+1})} \cfrk{k}= \prescript{k}{}{\mathfrak{F}}$, so that the limit $\varprojlim_k (\prescript{k}{}{\mathcal{O}}, \prescript{k}{}{\mathfrak{F}})$ gives a formal smoothing of $(X,\mathfrak{F})$.

If we assume that $H^2(X,\mathcal{O}_X) = 0$, instead of formal smoothability of the pair $(X, \det \mathfrak{F})$, then (proof of) Theorem \ref{thm:intro_main_theorem} still gives the required compatible set $\{\prescript{k}{}{\varphi}\}_{k \in \mathbb{N}}$ of Maurer-Cartan elements, so
the pair $(X,\mathfrak{F})$ is again formally smoothable. 

\bibliographystyle{amsplain}
\bibliography{geometry}

\end{document}